\documentclass[10pt]{article}

\usepackage{url}
\usepackage{mathtools}
\usepackage{amssymb}
\usepackage{amsthm}
\usepackage{empheq}
\usepackage{latexsym}
\usepackage{enumitem}
\usepackage{eurosym}
\usepackage{dsfont}
\usepackage{yfonts}
\usepackage{appendix}
\usepackage{color}
\usepackage{aliascnt}
\usepackage[unicode]{hyperref}
\usepackage{frcursive}
\usepackage[utf8]{inputenc}
\usepackage[T1]{fontenc}
\usepackage{geometry}
\usepackage{multirow}
\usepackage[colorinlistoftodos]{todonotes}
\usepackage{lmodern}
\usepackage{anyfontsize}
\usepackage{natbib}
\usepackage{cleveref}

\usepackage{amsbsy}

\usepackage{comment}

\setcitestyle{numbers,open={[},close={]}}

\definecolor{red}{rgb}{0.7,0.15,0.15}
\definecolor{green}{rgb}{0,0.5,0}
\definecolor{blue}{rgb}{0,0,0.7}
\hypersetup{colorlinks, linkcolor={red},citecolor={green}, urlcolor={blue}}

\makeatletter \@addtoreset{equation}{section}

\newtheorem{theorem}{Theorem}[section]
\crefname{theorem}{theorem}{theorems}
\Crefname{theorem}{Theorem}{Theorems}

\newaliascnt{assumption}{theorem}
\newtheorem{assumption}[assumption]{Assumption}
\aliascntresetthe{assumption}
\crefname{assumption}{assumption}{assumptions}
\Crefname{assumption}{Assumption}{Assumptions}

\newaliascnt{corollary}{theorem}
\newtheorem{corollary}[corollary]{Corollary}
\aliascntresetthe{corollary}
\crefname{corollary}{corollary}{corollaries}
\Crefname{corollary}{Corollary}{Corollaries}

\newaliascnt{example}{theorem}

\aliascntresetthe{example}
\crefname{example}{example}{examples}
\Crefname{example}{Example}{Examples}

\newaliascnt{exercise}{theorem}

\aliascntresetthe{exercise}
\crefname{exercise}{exercise}{exercises}
\Crefname{exercise}{Exercise}{Exercises}

\newaliascnt{lemma}{theorem}

\aliascntresetthe{lemma}
\crefname{lemma}{lemma}{lemmas}
\Crefname{lemma}{Lemma}{Lemmas}

\newaliascnt{proposition}{theorem}
\newtheorem{proposition}[proposition]{Proposition}
\aliascntresetthe{proposition}
\crefname{proposition}{proposition}{propositions}
\Crefname{proposition}{Proposition}{Propositions}

\newaliascnt{condition}{theorem}

\aliascntresetthe{condition}
\crefname{condition}{condition}{conditions}
\Crefname{condition}{Condition}{Conditions}

\newaliascnt{definition}{theorem}
\newtheorem{definition}[definition]{Definition}
\aliascntresetthe{definition}
\crefname{definition}{definition}{definitions}
\Crefname{definition}{Definition}{Definitions}

\newaliascnt{remark}{theorem}
\newtheorem{remark}[remark]{Remark}
\aliascntresetthe{remark}
\crefname{remark}{remark}{remarks}
\Crefname{remark}{Remark}{Remarks}

\def\no{\noindent}
\def\beq{\begin{eqnarray}}
\def\eeq{\end{eqnarray}}
\def\be*{\begin{eqnarray*}}
\def\ee*{\end{eqnarray*}}

\def \E{\mathbb{E}}

\def \G{\mathbb{G}}
\def \H{\mathbb{H}}

\def \P{\mathbb{P}}
\def \Q{\mathbb{Q}}
\def \R{\mathbb{R}}

\def \Pr{\mathrm{P}}

\def\Ac{{\cal A}}

\def\Ec{{\cal E}}
\def\Fc{{\cal F}}
\def\Gc{{\cal G}}
\def\Hc{{\cal H}}

\def\Jc{{\cal J}}

\def\Lc{{\cal L}}

\def\Pc{{\cal P}}

\def\Rc{{\cal R}}

\def\Uc{{\cal U}}
\def\Vc{{\cal V}}
\def\Wc{{\cal W}}

\def\Yc{{\cal Y}}
\def\Zc{{\cal Z}}

\def\x{\times}

\def\Om{\Omega}

\def\om{\omega}

\def\0{\mathbf{0}}
\def \Ec{\mathcal{E}}

\def\normeL2#1{\left\|{#1}\right\|_{L^2}}

\def\Nh{\widehat N}

\def\Yh{\widehat Y}

\def\Wh{\widehat W}
\def\Zh{\widehat Z}

\def \Tr{{\rm Tr}}

\def \Zbb{\mathbf{Z}}

\def \1{\mathds{1}}

\def \xbb {\boldsymbol{x}}

\def \Zbb{\mathbf{Z}}

\def\Ir{{\rm I}}

\def\Rr{{\rm R}}
\def\Kr{{\rm K}}

\def \Rr{\mathrm{R}}

\DeclareUnicodeCharacter{014D}{\=o}
 \title{

 W-BSDEs with Common Noise: Well-posedness, Characterization, and Consequences
 }

\author{
 Mao Fabrice Djete\footnote{\'Ecole Polytechnique Paris, Centre de Math\'ematiques Appliqu\'ees, mao-fabrice.djete@polytechnique.edu. The author benefits from the financial support of the Chairs {\it Financial Risk} and {\it Finance and Sustainable Development}}
 \and
 Mattia Martini \footnote{\'Ecole Polytechnique Paris, Centre de Math\'ematiques Appliqu\'ees, mattia.martini@polytechnique.edu. The author thanks the support of the French ANR PEPR Math-Vives project MIRTE ANR-23-EXMA-0011}
 }

\begin{document}

\maketitle

\begin{abstract}
We study a notion of backward stochastic differential equation on the Wasserstein
space, called a W-BSDE. This work extends to the common noise setting recent
results of the first author. The equation is formulated in Markovian form along
admissible flows of conditional laws generated by McKean-Vlasov dynamics with
common noise. A solution is a pair of fields $(Y,Z)$, where $Y$ is a functional
on the space of probability measures and $Z$ is the corresponding intrinsic
Wasserstein gradient.

Our main result shows that the well--posedness of the W-BSDE can be reduced to a
characteristic equation for a field $p$. Whenever this equation admits a suitable
solution, we construct a W-BSDE solution and identify $Z$ with both the
characteristic field $p$ and the intrinsic gradient of $Y$. This gives an
existence result for the W-BSDE and, at the same time, a probabilistic
construction of a functional on the Wasserstein space together with its intrinsic
gradient.

We then develop several consequences of the theory. We obtain verification
principles for McKean-Vlasov control problems and for mean field games with common
noise, without relying on convexity or separability assumptions. We also show that the
characteristic field admits a dynamic representation through a McKean-Vlasov
forward-backward SDE. The first component $Y$ solves the associated semilinear PDE on the space of
probability measures in the viscosity sense, including the second-order term
induced by common noise. Under additional smoothness, $Y$ is a classical
solution. Finally, we
prove that W-BSDEs arise as intrinsic mean--field limits of classical
finite--dimensional BSDEs written on empirical measures, with convergence of both
the value component and the rescaled martingale integrands.

\end{abstract}

\vspace{3mm}

\vspace{3mm}
\no{\bf MSC2010.} 60K35, 60H30, 91A13, 91A23, 91B30.

\section{Introduction}\label{sec:intro}
Since the seminal works of \citeauthor*{bismut1973conjugate}
\cite{bismut1973conjugate} and \citeauthor*{pardoux1990adapted}
\cite{pardoux1990adapted}, backward stochastic differential equations
(BSDEs) have become a central tool in stochastic analysis, stochastic control,
mathematical finance, and in the probabilistic study of nonlinear partial
differential equations. We refer, among many others, to
\citeauthor*{peng1991probabilistic}~\cite{peng1991probabilistic},
\citeauthor*{pardoux1992backward}~\cite{pardoux1992backward}, and
\citeauthor*{el1997backward}~\cite{el1997backward} for early developments and
applications of the theory.
To recall the basic Markovian picture, let $T>0$, let $f:[0,T]\times\R^d\times\R^d\longrightarrow \R$
be a generator, and let $g:\R^d\longrightarrow \R$
be a terminal condition. Given a Brownian motion $W$ and the forward process $X_t=X_0+W_t,\quad t\in[0,T],$
the associated BSDE consists in finding an adapted pair $(Y,Z)$ such that
\begin{align} \label{eq:def}
    Y_t
    =
    g(X_T)
    +
    \int_t^T f(s,X_s,Z_s)\,\mathrm{d}s
    -
    \int_t^T Z_s\cdot\mathrm{d}W_s,
    \qquad t\in[0,T].
\end{align}
Here $Y$ is real-valued, while $Z$ is $\R^d$-valued and plays the role of
the martingale integrand.
The initial framework of \eqref{eq:def} has since been considerably extended.
Important developments include BSDEs with generators of quadratic or
superquadratic growth, see for instance \citeauthor*{kobylanski2000backward}
\cite{kobylanski2000backward}; reflected BSDEs with obstacle constraints, as in
\citeauthor*{el1997reflected}~\cite{el1997reflected}; BSDEs driven by Lévy
processes or, more generally, by martingales with jumps; and second-order BSDEs,
which are designed to incorporate volatility uncertainty and nonlinear
expectations, see \citeauthor*{cheridito2007second}
\cite{cheridito2007second} and \citeauthor*{soner2012wellposedness}
\cite{soner2012wellposedness}. Systems of BSDEs and multidimensional BSDEs have
also been widely studied, notably in relation with stochastic games, equilibrium
problems, and risk-sharing models. These developments illustrate the flexibility
of the BSDE methodology and its ability to encode both analytic and probabilistic
structures.

\smallskip
In the Markovian finite-dimensional setting considered above, the coefficients
depend on the past only through the current state $X_t$. This allows us to reformulate the problem equivalently in feedback form. More precisely, one seeks Borel maps
\[
    u:[0,T]\times\R^d\longrightarrow \R,
    \qquad
    v:[0,T)\times\R^d\longrightarrow \R^d,
\]
satisfying suitable integrability conditions and such that
\begin{align} \label{eq:markovian_def}
    u(t,X_t)
    =
    g(X_T)
    +
    \int_t^T f\big(s,X_s,v(s,X_s)\big)\,\mathrm{d}s
    -
    \int_t^T v(s,X_s)\cdot\mathrm{d}W_s.
\end{align}
Under standard regularity assumptions, the solution $(u,v)$ satisfies
\[
    Y_t=u(t,X_t),
    \qquad
    Z_t=v(t,X_t).
\]
Moreover, the map $u$ admits a spatial derivative and the second component of
the BSDE is given by
\[
    Z_t=\nabla_x u(t,X_t)=v(t,X_t),
\]
see for instance \citeauthor*{ma2002representation}~\cite[Theorem 4.2.]{ma2002representation}.
Thus, in the Markovian case, the BSDE does not only represent the value field
$u$; it also identifies its gradient through the martingale integrand.

\smallskip
This relation between BSDEs, gradients, and semilinear parabolic PDEs is one of
the main reasons why BSDE techniques are powerful in stochastic control and
related fields. Indeed, when $u$ is sufficiently smooth, applying Itô's formula
to $u(t,X_t)$ shows that $u$ solves the semilinear PDE
\[
    \partial_t u(t,x)
    +
    \frac12 \Delta u(t,x)
    +
    f\big(t,x,\nabla_x u(t,x)\big)
    =
    0,
    \qquad
    u(T,x)=g(x).
\]
Thus, among other things, BSDEs give a probabilistic construction of the PDE solution, and the
process $Z$ identifies its spatial gradient. A key advantage of this approach is
that BSDE well-posedness often requires weaker regularity assumptions than a
classical PDE formulation.

\medskip
{\bf \large Our contribution}.
The goal of this paper is to develop an analogue of this picture when the relevant
state variable is no longer a point in a finite-dimensional space, but a probability
measure. This situation is natural in McKean-Vlasov models. In particular, in the
presence of common noise, the object which evolves in a Markovian way is the
conditional law of the state given the common noise. Thus the natural state space is
the Wasserstein space of probability measures. We introduce and study a notion of
backward stochastic differential equation on this space, which we call a
Wasserstein BSDE, or W-BSDE. The approach is motivated by the \emph{Markovian} definition \Cref{eq:markovian_def} rather than the usual formulation \Cref{eq:def}. A preliminary version, without common noise, has been introduced in \cite{djete2025notionbsdewassersteinspace}.
The basic idea is the following (see \Cref{def:W-BSDE} for more details). Given a generator
\[
        \Fc:[0,T]\times \mathbb R^d\times \mathcal P(\mathbb R^d)\times \mathbb R^d
        \longrightarrow \mathbb R
\]
and a terminal condition
\[
        G:\mathcal P(\mathbb R^d)\longrightarrow \mathbb R,
\]
we look for two deterministic fields
\[
        Y:[0,T]\times \mathcal P(\mathbb R^d)\longrightarrow \mathbb R,
        \qquad
        Z:[0,T)\times \mathbb R^d\times \mathcal P(\mathbb R^d)
        \longrightarrow \mathbb R^d,
\]
such that, with $(\Gc_t)_{t \in [0,T]}$ the common noise filtration, along every admissible flow of conditional laws
\[
        \mu_t=\mathcal L(X_t\mid \mathcal G_t),
\]
the process $Y(t,\mu_t)$ satisfies a backward identity. The field $Z$ plays the role
of the intrinsic gradient of $Y$ with respect to the measure variable. More
precisely, under suitable assumptions, we prove the identification
\[
        Z(t,x,m)=\nabla_x\delta_m Y(t,m)(x),
\]
where $\delta_mY$ denotes the linear functional derivative of $Y$ with respect to
the measure argument. In this sense, the W-BSDE is the measure--dependent counterpart
of a Markovian BSDE: the Markovian state is the conditional law, and the analogue of
the martingale integrand is the Wasserstein gradient field.

\smallskip
A central point of the paper is that the well--posedness of the W-BSDE can be
reduced to a characteristic equation for the gradient field (see \Cref{thm:existence_WBSDE_gradient} for more details ). If $p(t,x,m)$ denotes
the candidate value of $Z(t,x,m)$, we associate with $p$ a McKean-Vlasov flow
\[
        \ell^{p,t,m}_s
        =
        \mathcal L(S^{t,m}_s\mid \mathcal G_s),
\]
whose drift is determined by $\nabla_z\Fc$ evaluated at the field $p$. The
characteristic equation then expresses $p(t,x,m)$ through a formula
involving the derivative of the terminal condition, the generator, and the linear
functional derivative of the generator with respect to the measure variable. Our
main result states that, whenever this characteristic equation admits a solution, one can construct a W-BSDE solution $(Y,Z)$ and identify
\[
        Z=p=\nabla_x\delta_mY.
\]
Moreover, uniqueness of the characteristic equation implies uniqueness of the
corresponding W-BSDE solution.

\smallskip
Let us stress that this formulation is intrinsically probabilistic. The W-BSDE is
not imposed as a pointwise identity on the Wasserstein space. Rather, it is required
to hold along every admissible flow of conditional laws. This is reminiscent of the
finite-dimensional Markovian BSDE framework, where the equation is tested along
forward state processes.

\smallskip
There is, however, an important difference. In the finite-dimensional setting, it is
often enough to test the BSDE along one fixed non-degenerate forward diffusion:
other admissible drift perturbations can then be recovered by a Girsanov change of
measure. In the present Wasserstein setting, such a reduction is no longer
available in general. The state variable is the conditional law itself, and changing
the drift changes the induced flow of conditional distributions in a way that cannot
be encoded simply by a change of probability measure on a single reference flow.
For this reason, the definition must be imposed along all admissible conditional
law flows $(\mu_t)_{t\in[0,T]}$.

\smallskip
Thus, the definition is genuinely dynamic: it characterizes the evolution of
$Y(t,\mu_t)$ along the family of measure-valued trajectories generated by
McKean-Vlasov dynamics with common noise.

\medskip
{\bf \large Consequences}.
The first consequence of the theory, developed in detail in
\Cref{sec:optimal_control}, is a verification principle for McKean-Vlasov control
problems with common noise. In this interpretation, the component $Y$ plays the
role of the value function on the Wasserstein space, while $Z$ is identified
with its intrinsic gradient. The optimal feedback is then obtained by a pointwise
maximization of the Hamiltonian
\[
        a \mapsto L(t,x,m,a)+b(t,x,m,a)\cdot Z(t,x,m).
\]
More precisely, once the W-BSDE is solved, the candidate optimal control is given
by a measurable maximizer of this expression, evaluated at the current state and
conditional law. The verification argument is then very close to the classical
BSDE proof: the W-BSDE identity gives an upper bound for the payoff of any
admissible control, and equality is obtained for the feedback maximizing the
Hamiltonian.
This provides a direct and probabilistic route to the McKean-Vlasov control
problem with common noise. Compared with maximum principle approaches, see for
instance \citeauthor*{andersson2011maximum}~\cite{andersson2011maximum},
\citeauthor*{buckdahn2011general}~\cite{buckdahn2011general}, and
\citeauthor*{Carmona2013ForwardBackwardSD}~\cite{Carmona2013ForwardBackwardSD},
the present verification method is more direct once the W-BSDE has been
constructed. In particular, it does not require convexity assumptions on the
control problem. The optimality is obtained from the pointwise Hamiltonian
maximization encoded in the W-BSDE, rather than from a variational argument or
from adjoint equations combined with convexity conditions.

\smallskip
The second consequence concerns mean field games with common noise; see
\Cref{sec:mfg} for a detailed discussion. In this case, the flow of measures is no
longer chosen by a controller, but must be consistent with the conditional law of
a representative player optimally responding to this same flow. We show that this
fixed-point structure can be encoded in a W-BSDE defined on an enlarged
Wasserstein space: one component describes the representative player's control
problem, while the other carries the candidate equilibrium flow. Once the
characteristic equation for the equilibrium gradient is solved, the feedback
obtained by maximizing the Hamiltonian generates a mean field game equilibrium,
and the W-BSDE provides the corresponding value representation. As in the control
case, no convexity condition is imposed, and the equilibrium feedback is obtained
directly from the characteristic field.

\smallskip
The third consequence, developed in detail in \Cref{sec:FBSDEs}, is a connection
with McKean-Vlasov forward-backward SDEs. The characteristic field $Z$ can be
recovered dynamically as the backward component of a McKean-Vlasov FBSDE: the
forward equation is driven by $\nabla_z\Fc$, while the backward equation involves
the linear functional derivatives of $G$ and $\Fc$. This gives a
characteristic representation of the W-BSDE, in direct analogy with the classical
link between a Markovian BSDE and the FBSDE associated with the gradient of its
decoupling field. More precisely, if
$(X^{t,m},\Yc^{t,m},\Zc^{t,m})$ denotes the FBSDE solution starting from
$\Lc(X^{t,m}_t \mid \Gc_t)=m$, then, along the characteristic flow,
\[
        \Zc^{t,m}_s
        =
        Z(s,X^{t,m}_s,\mu^{t,m}_s),
        \qquad
        \mu^{t,m}_s=\mathcal L(X^{t,m}_s\mid \mathcal G_s).
\]

The fourth consequence is the connection with PDEs on the Wasserstein space; see
\Cref{sec:PDEs}. The W-BSDE provides first a viscosity solution, in the sense
introduced there, to the semilinear equation on $\Pc(\R^d)$ associated with the
generator $\Fc$ and terminal condition $G$. More precisely, under the general
assumptions of \Cref{thm:existence_WBSDE_gradient}, the first component
\[
        \Uc(t,m):=Y(t,m)
\]
is a viscosity solution of the corresponding Wasserstein PDE. If, in addition, the
characteristic field $p$ enjoys enough smoothness in the space and measure
variables, then this viscosity solution is in fact classical and satisfies
\[
\begin{aligned}
0
&=
\partial_t \Uc(t,m)
+
\int_{\R^d}
\Fc\bigl(t,x,m,\nabla_x\delta_m\Uc(t,m)(x)\bigr)\,m(\mathrm{d}x)
\\
&\quad
+
\frac12
\int_{\R^d}
\Tr\Big[
\bigl(\Ir_d+\sigma_\circ\sigma_\circ^\top\bigr)
\nabla_x^2\delta_m\Uc(t,m)(x)
\Big]\,m(\mathrm{d}x)
\\
&\quad
+
\frac12
\int_{\R^d}\int_{\R^d}
\Tr\Big[
\sigma_\circ\sigma_\circ^\top
\nabla_y\nabla_x\delta_m^2\Uc(t,m)(x,y)
\Big]\,m(\mathrm{d}x)m(\mathrm{d}y),
\end{aligned}
\]
with terminal condition
\[
        \Uc(T,m)=G(m).
\]
The last term is the second-order contribution in the measure variable generated
by the common noise.
Thus the W-BSDE gives a probabilistic construction of solutions to semilinear
PDEs on the Wasserstein space. Under the assumptions of the main theorem, this
construction yields a viscosity solution; under the additional regularity stated in
\Cref{prop:WBSDE_PDE}, it yields a classical solution. This distinction is
important. The viscosity property only concerns the value function $Y$, while the W-BSDE
also identifies its intrinsic gradient,
\[
        Z(t,x,m)=\nabla_x\delta_mY(t,m)(x).
\]
This explains why our assumptions are stronger than those required for notions
based solely on the value function (see for instance \citeauthor*{BERTUCCI2024}~\cite{BERTUCCI2024}, \citeauthor*{cardaliaguet2022splitting}~\cite{cardaliaguet2022splitting},  \citeauthor*{cosso2024master}~\cite{cosso2024master}, \citeauthor*{Daudin01022025}~\cite{Daudin01022025}, \citeauthor*{meteViscosity2024}~\cite{meteViscosity2024}, \citeauthor*{Bayraktar03042025}~\cite{Bayraktar03042025} \dots), but they provide at the same time the value
functional and its Wasserstein gradient.
Apart from the linear-quadratic framework of
\citeauthor*{pham2018bellman}~\cite{pham2018bellman} and the construction in
\citeauthor*{cardaliaguet2015master}~\cite[Theorem~3.12]{cardaliaguet2015master}
using mean field games on the torus with separated Hamiltonian, this appears to be
among the first general frameworks yielding classical solutions of such PDEs on
the space of probability measures. Importantly, common noise is allowed, and
$\Fc$ is not required to have a Hamiltonian structure. A set of sufficient
conditions ensuring the required smoothness is given in the companion paper \citeauthor*{fabricemattiaDecoupling}~\cite{fabricemattiaDecoupling},
under smallness assumptions on the horizon $T$, and/or on the size of
$(\Fc,G)$ and their derivatives.

\smallskip
The fifth consequence, discussed in \Cref{sec:n_player}, is a mean-field limit
result for classical finite-dimensional BSDEs. Given a particle system with
empirical measure
\[
        \mu_t^n=\frac1n\sum_{i=1}^n\delta_{N_t^i},
\]
we consider the associated $n$-dimensional BSDE with terminal condition
$G(\mu_T^n)$ and generator given by the empirical average of $\Fc$. Under
suitable regularity assumptions, we prove that its solution converges to the
W-BSDE evaluated along the empirical measure:
\[
        Y_t^n \simeq Y(t,\mu_t^n),
        \qquad
        nZ_t^{i,n}\simeq Z(t,N_t^i,\mu_t^n).
\]
This identifies the W-BSDE as the intrinsic mean-field limit of a family of
finite-dimensional BSDEs.
When $\Fc$ is a Hamiltonian, that is, when the equation comes from a control
problem, the convergence of the value component $Y^n$ has been extensively
studied. For qualitative convergence, we refer, among others, to
\citeauthor*{lacker2017limit}~\cite{lacker2017limit} in the case without common
noise, to \citeauthor*{djete2019general}~\cite{djete2019general} in the presence
of common noise, and to \citeauthor*{GangboFinite2021}~\cite{GangboFinite2021}
for a PDE approach. Quantitative rates have been obtained in several works,
including
\citeauthor*{CardaliaguetAnAlgebraic2023}~\cite{CardaliaguetAnAlgebraic2023},
\citeauthor*{DAUDIN2024110660}~\cite{DAUDIN2024110660},
\citeauthor*{cardaliaguet2023sharpconvergenceratesmean}
~\cite{cardaliaguet2023sharpconvergenceratesmean},
\citeauthor*{BayraktarConvergence2025}~\cite{BayraktarConvergence2025}, \citeauthor*{cecchindaudinjacksonmartini}~\cite{cecchindaudinjacksonmartini}.
In convex regimes, the natural rate for the value component is typically of order
$1/n$. Without convexity, \cite[Example~2]{DAUDIN2024110660} shows that a
vanishing viscosity approach may lead only to a $1/\sqrt n$ rate, while
\cite[Theorem~2.3]{cardaliaguet2023sharpconvergenceratesmean} shows that the
$1/n$ rate can still be recovered in regions of strong regularity. In the
present work, the $1/n$ rate follows from the existence of a sufficiently smooth
classical solution of the PDE on the space of measures. In the non-convex case,
such regularity is obtained by imposing a smallness condition on the maturity
$T$, and/or on the size of $(\Fc,G)$ and their derivatives; see our companion paper \cite{fabricemattiaDecoupling} for more details. This is consistent with the obstruction exhibited in
\cite[Example~2]{DAUDIN2024110660}.

\smallskip
An important aspect of the framework is that it is not restricted to generators
coming from an optimization problem. Although mean field control and mean field
games are among the main motivations, the W-BSDE and the characteristic equation
are formulated for a general nonlinear generator $\Fc$. In particular, the
construction does not rely on convexity, concavity, or separability assumptions
between the state variable and the measure variable. The central object is the
characteristic equation for the gradient field; once this equation is solved, the
W-BSDE, the Wasserstein PDE, the McKean-Vlasov FBSDE representation, and the
finite-particle approximation follow from the same principle.

\medskip
{\bf \large Organization on the paper}.
In \Cref{sec:W_BSDEs_def}, we introduce the notion of
W-BSDE and discuss its interpretation. In \Cref{sec:BSDE_wellposedness}, we state the main
well-posedness theorem and the characteristic equation for the gradient field.
\Cref{sec:conseq} presents the main consequences of the theory: verification for
McKean-Vlasov control, verification for mean field games, the connection with
McKean-Vlasov FBSDEs, the connection with PDEs on the Wasserstein space, and the
mean-field limit of classical finite-dimensional BSDEs. \Cref{sec:proofs} contains the
proofs.

\medskip
{\bf \large Notations}.
	$(i)$
    Given a Polish space $(E,\Delta)$ and $q \ge 1,$ we denote by $\Pc(E)$ the collection of all Borel probability measures on $E$,
	and by $\Pc_q(E)$ the subset of Borel probability measures $\mu$
	such that $\int_E \Delta(e, e_0)^q  \mu(de) < \infty$ for some $e_0 \in E$. For $q=0$, we write $\circ$ instead of $0$ and simply set $\Pc_\circ(E):=\Pc(E)$. We denote by $\Wc_q$ the corresponding $q$-Wasserstein distance:
\[
\Wc_q(\gamma,\gamma')^{1 \vee q}
:=
\inf_{\pi \in \Pi(\gamma, \gamma')}
\int_{E \x E} \Delta_q(e,e') ~\pi( \mathrm{d}e, \mathrm{d}e'),
~\Delta_q(e,e'):=\Delta(e,e')^q\1_{\{ q \ge 1\}}+1\wedge \Delta(e,e')\1_{\{q=0\}},
\]
with $\Pi(\gamma, \gamma')$ the collection of all coupling probability measures with marginals $\gamma$ and $\gamma'$. In particular, $\Wc_\circ$ is the Kantorovich-Rubinstein distance associated with the truncated metric $\Delta_\circ=1\wedge\Delta$: it metrizes the weak topology, $(\Pc(E),\Wc_\circ)$ is Polish, and $\Wc_\circ\le 1\wedge\Wc_1$.

\medskip
    \noindent $(ii)$
	Given a probability space $(\Om, \Hc, \P)$ supporting a sub-$\sigma$-algebra $\Gc \subset \Hc$ then for a Polish space $E$ and any random variable $\xi: \Om \longrightarrow E$, both the notations $\Lc^{\P}( \xi | \Gc)(\om)$ and $\P^{\Gc}_{\om} \circ (\xi)^{-1}$ are used to denote the conditional distribution of $\xi$ knowing $\Gc$ under $\P$.

\medskip
    $(iii)$ A map $F:\Pc(\R^d)\to\R$ admits a linear functional derivative if there
exists a Borel map $\delta_mF:\Pc(\R^d)\times\R^d\to\R$ such that,
writing $m_\theta=(1-\theta)m+\theta m'$,
\[
    F(m')-F(m)
    =
    \int_0^1\int_{\R^d}
    \delta_mF(m_\theta)(x)\,(m'-m)(\mathrm dx)\,\mathrm d\theta
\]
for every $m,m'\in\Pc(\R^d)$ satisfying $\int_0^1\int_{\R^d}
    |\delta_mF(m_\theta)(x)|\,(m+m')(\mathrm dx)\,\mathrm d\theta<\infty.$
The derivative is defined up to an additive constant independent of $x$.
Its spatial gradient, when defined, is denoted by
$\nabla_x\delta_mF(m)(x)$ and called the Lions derivative.
For maps with additional arguments, $\delta_m$ acts only on the measure
variable, keeping the others fixed, and acts componentwise on vector-valued maps.
The evaluation variable is written separately, as in
$\delta_m\Vc(t,x,m)(y)$. Iteration defines the second derivative
$\delta_m^2\Vc(m)(x)(y)=\delta_m^2\Vc(m)(x,y)$.
Subscripts on $\nabla$ and $D$ identify the arguments with respect to which
differentiation is performed, before evaluation. Thus, in
$\delta_m\Vc(t,x,m)(y)$, $\nabla_x$ and $\nabla_y$ act on the state and
evaluation variables, respectively, regardless of the symbols used for those
arguments.

\section{The notion of W-BSDE} \label{sec:W_BSDEs_def}

We now introduce the main object of the paper. In the McKean-Vlasov setting with
common noise, the natural Markovian state variable is the conditional law of the
state given the common noise. The purpose of this section is to formulate a
backward equation on this space of conditional laws.

\smallskip
Throughout this paper, we fix a dimension $d\ge 1$, a time horizon $T>0$, and a common noise coefficient $\sigma_\circ\in \R^d$. We work on a filtered probability space $(\Om,\H:=(\Hc_t)_{t\in[0,T]},\Hc,\P)$\footnote{\label{footnote_enlarge}The probability space $(\Om,\H,\P)$ is assumed to be sufficiently rich in the following sense: whenever needed, it can support additional independent random variables, such as sequences of i.i.d.\ uniform random variables or Brownian motions, with prescribed laws under $\P$, without requiring us to explicitly enlarge the underlying space.
}
supporting an $\R^d\times \R$-valued $\H$-Brownian motion $(W,W^\circ)=(W_t,W^\circ_t)_{t\in[0,T]}$.
The process $W^\circ$ will represent the common noise. We denote by $\G=(\Gc_t)_{t\in[0,T]}$
the $\P$-completed filtration generated by the common noise, namely the completion of$\bigl(\,\sigma(\,\sigma_\circ\,W^\circ_s:\,0\le s\le t)\bigr)_{t\in[0,T]}.$

\medskip

We shall use the following class of admissible conditional law flows. A $\Pc(\R^d)$-valued, $\G$-adapted, continuous process
\[
    \mu=(\mu_t)_{t\in[0,T]}
\]
belongs to $\Lc$ if there exist an initial time $e \in [0,T]$, a bounded Borel map $B:[0,T] \x \R^d \x \Pc(\R^d) \to \R^d$
and a process $X=(X_t)_{t\in[0,T]}$ such that, for every $t\in[0,T]$,
\[
    \mu_t
    =
    \Lc(X_t\mid \Gc_t)
    =
    \Lc(X_t\mid \Gc_T),\quad \mbox{with}\quad \mu_t=\Lc(X_0)\quad\mbox{for}\quad t \in [0,e],
\]
and
\[
    \mathrm{d}X_t
    =
    B(t,X_t,\mu_t)\,\mathrm{d}t
    +
    \mathrm{d}W_t
    +
    \sigma_\circ\,\mathrm{d}W^\circ_t\quad\mbox{for}\quad t \in [e,T].
\]
Here and throughout, $\sigma_\circ\,\mathrm{d}W^\circ_t$ is understood as an $\R^d$-valued common-noise increment. The existence of an admissible conditional law flow $\mu$ is tied to the well-posedness of the McKean-Vlasov dynamics driven by the drift $B$. We shall
therefore call such a coefficient $B$ an \emph{admissible drift} whenever the equation
for $X$ is well posed. An example of condition is the following: for any $\overline{T}< T$ the map $[0,\overline{T}] \x \R^d \x \Pc(\R^d) \ni(t,x,m) \mapsto B(t,x,m) \in \R^d$ is Lipschitz in $(x,m)$ uniformly with respect to $t\in[0,\overline{T}]$ (see for instance \cite[Theorem A.3.]{djete2019mckean}).

\medskip

Let
\[
    \Fc:[0,T]\times \R^d \times \Pc(\R^d)\times \R^d \longrightarrow \R
\]
be a Borel map, which will play the role of the generator, and let
\[
    G:\Pc(\R^d)\longrightarrow \R
\]
be a Borel map, which will serve as the terminal condition. The precise assumptions imposed on $\Fc$ and $G$ will be specified below.

\medskip

We consider two Borel maps
\[
    Y:[0,T]\times \Pc(\R^d)\longrightarrow \R,
    \qquad
    Z:[0,T)\times \R^d\times \Pc(\R^d)\longrightarrow \R^d.
\]

\begin{definition} \label{def:W-BSDE}
We say that a pair $(Y,Z)$ is a {\rm BSDE solution on the Wasserstein space}
associated with $(\Fc,G)$ if the following property holds.
For every admissible conditional law flow $\mu=(\mu_t)_{t\in[e,T]}\in\Lc$,
associated with an initial time $e$, a drift $B$, and a state process $X$,
the process
\[
    t\longmapsto Y(t,\mu_t)
\]
admits a continuous modification on $[e,T]$, and the integrability condition
\begin{align} \label{eq:integral_cond}
    \E \left[
    \int_{e}^T
    \left(
        \left|\Fc\left(r,X_r,\mu_r,Z(r,X_r,\mu_r)\right)\right|
        +
        \left|Z(r,X_r,\mu_r)\right|
        +
        \left|\E\left[Z(r,X_r,\mu_r)\mid \Gc_T\right]\right|^2
    \right)\mathrm{d}r
    \right]
    <\infty
\end{align}
is satisfied. Moreover, for every $t\in[e,T]$, one has, $\P$-a.s.,
\begin{align} \label{eq:WBSDE_def}
    Y(t,\mu_t)
    =
    G(\mu_T)
    &+
    \E \left[
        \int_t^T
        \Fc\left(r,X_r,\mu_r,Z(r,X_r,\mu_r)\right)
        \,\mathrm{d}r
        \,\middle|\, \Gc_T
    \right]
    \nonumber
    \\
    &-
    \E \left[
        \int_t^T
        Z(r,X_r,\mu_r)\cdot B(r,X_r,\mu_r)
        \,\mathrm{d}r
        \,\middle|\, \Gc_T
    \right]
    -
    \int_t^T
    \E\left[Z(r,X_r,\mu_r)\mid \Gc_T\right]\cdot \sigma_\circ
    \,\mathrm{d}W^\circ_r .
\end{align}
\end{definition}

To simplify the terminology, we shall often refer to $(Y,Z)$ simply as a
solution to the Wasserstein BSDE, or {\rm W-BSDE}, associated with $(\Fc,G)$.

\begin{remark}
Let us make several comments on the meaning of the previous definition.

\smallskip

$(i)$ The terminology {\rm BSDE} comes essentially from the fact that the
identity {\rm \Cref{eq:WBSDE_def}} is written in a backward form: the quantity on
the left-hand side is $\Gc_t$-adapted, whereas the quantities appearing on
the right-hand side are expressed in terms of the terminal $\sigma$-field
$\Gc_T$ and of integrals over the time interval $[t,T]$.

It is important to stress that, although the equation contains a stochastic
integral with respect to the common noise $W^\circ$, the randomness of the
couple $(Y,Z)$ at time $t$ comes only through the random measure $\mu_t$.
In other words, once the conditional law $\mu_t$ is fixed, the fields
$Y(t,\mu_t)$ and $Z(t,\cdot,\mu_t)$ are evaluated in a Markovian way. Thus,
in comparison with the classical Markovian BSDE setting, the relevant
Markovian variable is no longer the state $X_t$ itself, but rather its
conditional law $\mu_t$. This is precisely the natural state variable in the
McKean-Vlasov framework, especially in the presence of common noise.

\medskip
$(ii)$ A reader familiar with classical BSDEs might find it less natural to
work directly in such a Markovian formulation. However, as will be seen in {\rm \Cref{thm:existence_WBSDE_gradient}}, even in this Markovian setting the well-posedness of the equation already
raises significant difficulties. At the same time, this formulation provides
a useful probabilistic object which, to the best of our knowledge, allows one
to obtain results that do not seem directly accessible by other existing
methods.

\smallskip
One could of course try to formulate a path-dependent version of the same
notion. We do not pursue this direction here and leave it for future work.
Indeed, such an extension raises additional questions. In the classical
setting, path-dependence concerns the trajectory of a finite-dimensional
state process $X_t$. In the present framework, the state variable is the
conditional law $\mu_t$, which takes values in an infinite-dimensional space
of probability measures. Consequently, making the equation path-dependent
would require a careful treatment of paths of measures, rather than paths of
points in $\R^d$.

\medskip

$(iii)$ The backward identity {\rm\Cref{eq:WBSDE_def}} is imposed only along
admissible flows of conditional laws. More precisely, it is tested on every
flow $\mu$ generated by an admissible dynamics starting from an initial time
$e$ and driven by a drift $B$. Thus, the definition is not merely pointwise on
the Wasserstein space: it is a dynamic identity, required to hold along all
admissible trajectories in that space.

\smallskip
The integrability condition {\rm\Cref{eq:integral_cond}} also indicates the class
of probability measures on which the equation is meaningful. For instance, if
the growth of $\Fc$ or $Z$ requires $q$-polynomial moment estimates along the
admissible flows, then the relevant state space is effectively
$\Pc_q(\R^d)$, even if the maps are formally defined on $\Pc(\R^d)$. In the
present paper we mostly work with bounded coefficients and bounded data, so
this distinction will not play an essential role, and it will be enough to
work on $\Pc(\R^d)$.

\medskip

$(iv)$ If
\[
    \E\left[\int_e^T
    \left|Z(r,X_r,\mu_r)\right|^2\,\mathrm{d}r\right]<\infty .
\]
Then \Cref{eq:WBSDE_def} can be written in the more compact form
\begin{align*}
    Y(t,\mu_t)
    =
    G(\mu_T)
    +
    \E \left[
        \int_t^T
        \Fc\left(r,X_r,\mu_r,Z(r,X_r,\mu_r)\right)
        \,\mathrm{d}r
        \,\middle|\, \Gc_T
    \right]
    -
    \E \left[
        \int_t^T
        Z(r,X_r,\mu_r)\cdot
        \,\mathrm{d}X_r
        \,\middle|\, \Gc_T
    \right].
\end{align*}
Indeed, along the admissible dynamics,
\[
    \mathrm{d}X_r
    =
    B(r,X_r,\mu_r)\,\mathrm{d}r
    +
    \mathrm{d}W_r
    +
    \sigma_\circ\,\mathrm{d}W^\circ_r .
\]
The idiosyncratic martingale part disappears after conditioning with respect
to $\Gc_T$, while the common-noise part produces precisely the stochastic
integral appearing in {\rm\Cref{eq:WBSDE_def}}. This compact form emphasizes the
analogy with classical BSDEs, where the $Z$ component is paired with the
forward dynamics of the state process.
\end{remark}

\medskip
Before starting to discuss the well-posedness of the notion of W-BSDE we have introduced, let us mention what we will call uniqueness.
We say that the {\rm W-BSDE} associated with $(\Fc,G)$ has a unique solution if, whenever $(Y^1,Z^1)$ and $(Y^2,Z^2)$ are two solutions, then for every $\mu\in\Lc$ satisfying the integrability condition \Cref{eq:integral_cond},
\[
    Y^1(t,\mu_t)=Y^2(t,\mu_t),
    \qquad t\in[e,T],
\]
up to indistinguishability, and
\[
    Z^1(t,x,\mu_t)=Z^2(t,x,\mu_t),
    \qquad
    \text{for a.e. }(t,x,\omega)\in [e,T]\times \R^d\times\Om.
\]

We end this section with a simple comparison principle, which in particular gives
uniqueness of the first component $Y$. Let
$(\Fc^i,G^i)_{i=1,2}$ be two bounded generators and terminal conditions, and let
$(Y^i,Z^i)_{i=1,2}$ be corresponding W-BSDE solutions.
Assume that $\Fc^1$ is Lipschitz continuous in $(m,z)$,
uniformly in $(t,x)$, and that, for every $\overline T<T$, the maps
\begin{align} \label{eq:cond_comparison}
    m\longmapsto Z^i(t,x,m),
    \qquad i=1,2,
\end{align}
are Lipschitz continuous, uniformly in $(t,x)\in[0,\overline T]\times\R^d$.
Then the following comparison result holds.

\begin{proposition} \label{prop:comparison}
    Under the previous assumptions, if $G^1\le G^2$ and
    $\Fc^1\le \Fc^2$, then
    \[
        Y^1(e,m)\le Y^2(e,m),
        \qquad (e,m)\in[0,T]\times\Pc(\R^d).
    \]
\end{proposition}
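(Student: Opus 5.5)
Fix $(e,m)$ and compare the two solutions along one well-chosen admissible flow. The flow is built so that the difference of the two generators linearizes into a drift plus a nonpositive remainder, which is then absorbed by that drift (a Girsanov-type argument carried out inside the definition).

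\textbf{Choice of the flow.} Let $\Delta Z(t,x,m):=Z^1(t,x,m)-Z^2(t,x,m)$. Define the bounded Borel drift
\[
B(t,x,m):=\frac{\Fc^1\bigl(t,x,m,Z^1(t,x,m)\bigr)-\Fc^1\bigl(t,x,m,Z^2(t,x,m)\bigr)}{|\Delta Z(t,x,m)|^2}\,\Delta Z(t,x,m)\,\1_{\{\Delta Z\neq 0\}} .
\]
\begin{itemize}
\item \emph{Boundedness.} $B$ is bounded by the Lipschitz constant of $\Fc^1$ in $z$.
\item \emph{Key identity.} By construction, $\Fc^1(\cdot,Z^1)-\Fc^1(\cdot,Z^2)=\Delta Z\cdot B$.
\item \emph{Well-posedness.} I need the McKean--Vlasov equation $\mathrm{d}X_t=B(t,X_t,\mu_t)\mathrm{d}t+\mathrm{d}W_t+\sigma_\circ\mathrm{d}W^\circ_t$, started at time $e$ from $X_e\sim m$, to be well posed. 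Here $B$ is merely bounded measurable in $x$. This is where the Lipschitz assumption \eqref{eq:cond_comparison} on $m\mapsto Z^i$, together with Lipschitz continuity of $\Fc^1$ in $m$, is used: it should give existence and uniqueness of the conditional McKean--Vlasov equation with bounded, measurable-in-$x$ drift that is Lipschitz in the measure, via the nondegenerate idiosyncratic noise.
\end{itemize}
This is the main obstacle. The difference quotient defining $B$ is not obviously Lipschitz in $m$. The first thing I would try is to avoid needing full Lipschitz continuity of $B$: use a Schauder/weak-existence argument combined with the stability of the density-based fixed point on $[e,\overline T]$ for every $\overline T<T$, and extend to $T$ by boundedness. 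A fallback is to regularize $B$ by a Lipschitz $B^\varepsilon$ and pass to the limit. The admissibility definition only requires $B$ bounded Borel with a well-posed equation, so it suffices to establish the latter. With this in hand, $\mu\in\Lc$, and the integrability condition \eqref{eq:integral_cond} holds because all the data are bounded.

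\textbf{Subtracting the two equations.} Write \eqref{eq:WBSDE_def} for both solutions along this same flow $\mu$ and take the difference at $t=e$. Since $\mu_e=m$ is deterministic, $\Delta Y:=Y^1(e,m)-Y^2(e,m)$ is deterministic. The difference splits as
\[
\Delta Y=\bigl(G^1-G^2\bigr)(\mu_T)+\E\Bigl[\int_e^T\bigl(\Fc^1(Z^1)-\Fc^2(Z^2)-\Delta Z\cdot B\bigr)\mathrm{d}r\Bigm|\Gc_T\Bigr]-\int_e^T\E[\Delta Z\mid\Gc_T]\cdot\sigma_\circ\,\mathrm{d}W^\circ_r .
\]
Now decompose the generator term as
\[
\Fc^1(Z^1)-\Fc^2(Z^2)=\bigl[\Fc^1(Z^1)-\Fc^1(Z^2)\bigr]+\bigl[\Fc^1(Z^2)-\Fc^2(Z^2)\bigr].
\]
By the key identity, the first bracket equals $\Delta Z\cdot B$ and cancels exactly against the drift term. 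The second bracket is $\le0$ because $\Fc^1\le\Fc^2$.

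\textbf{Conclusion.} Take the expectation of the identity above. The stochastic integral has zero mean, since $\E[\Delta Z\mid\Gc_T]$ is bounded by boundedness of $Z^i$ on $[e,\overline T]$. Strictly, square-integrability follows from \eqref{eq:integral_cond}; I would localize on $[e,\overline T]$ and use continuity of $t\mapsto Y^i(t,\mu_t)$ to let $\overline T\to T$. Combined with $G^1\le G^2$, this gives
\[
\Delta Y\le \E\bigl[(G^1-G^2)(\mu_T)\bigr]+\E\Bigl[\int_e^T(\Fc^1-\Fc^2)(r,X_r,\mu_r,Z^2)\,\mathrm{d}r\Bigr]\le 0,
\]
which is the claim.
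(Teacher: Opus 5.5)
Your architecture is the paper's: pick a drift that turns $\Fc^1(\cdot,Z^1)-\Fc^1(\cdot,Z^2)$ into $\Delta Z\cdot B$, subtract the two identities \eqref{eq:WBSDE_def} along the induced flow, take expectations, and use $G^1\le G^2$, $\Fc^1\le\Fc^2$. The problem is the step you yourself call the main obstacle. With the untruncated quotient, admissibility of $B$ is a genuine gap, and neither remedy you sketch closes it.

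\emph{Why the untruncated drift is not admissible.} Your $B$ is in general not even continuous in $m$. For $d\ge2$, even with $\Fc^1$ smooth in $z$, near $\{\Delta Z=0\}$ it behaves like $(\nabla_z\Fc^1\cdot u)\,u$ with $u=\Delta Z/|\Delta Z|$, which depends on the direction in which $\Delta Z$ approaches $0$. For $d=1$, a kink of $\Fc^1$ in $z$ produces jumps: e.g.\ $\Fc^1=\min(|z|,1)$ and $Z^2=0$ give $B=\mathrm{sign}(Z^1)$ near $\{Z^1=0\}$. Consequently:
\begin{itemize}
\item The Lipschitz McKean--Vlasov theory does not apply.
\item A Schauder/compactness argument also fails. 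It needs continuity in $m$, and with common noise such arguments typically yield only weak solutions whose flow need not satisfy $\mu_t=\Lc(X_t\mid\Gc_t)$ for the filtration $\G$ of $W^\circ$, as membership in $\Lc$ requires.
\item \eqref{eq:cond_comparison} is only assumed on $[0,\overline T]$ with $\overline T<T$, so ``extend to $T$ by boundedness'' does not produce a flow on the last layer.
\item Replacing $B$ by a generic Lipschitz $B^\varepsilon$ fails as stated. The cancellation then leaves $\E\int_e^T\Delta Z\cdot(B-B^\varepsilon)(r,X^\varepsilon_r,\mu^\varepsilon_r)\,\mathrm dr$. Pointwise convergence $B^\varepsilon\to B$ does not control this term, because the evaluation points move with $\varepsilon$.
\end{itemize}

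\emph{The missing idea} is a regularization whose error is small uniformly over flows, not merely pointwise. The paper uses
\[
B^{c,\varepsilon}:=\chi_c(|\Delta Z|)\,\mathbf 1_{\{t\le T-\varepsilon\}}\,B,
\]
with $\chi_c$ smooth, $\chi_c=0$ on $[0,c/2]$ and $\chi_c=1$ on $[c,\infty)$.
\begin{itemize}
\item Where $\chi_c(|\Delta Z|)\neq0$ one has $|\Delta Z|\ge c/2$. So the Lipschitz continuity of $\Fc^1$ in $(m,z)$, together with \eqref{eq:cond_comparison} on $[0,T-\varepsilon]$, makes $B^{c,\varepsilon}$ bounded and Lipschitz in $m$ uniformly in $(t,x)$, hence admissible. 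On $[T-\varepsilon,T]$ it is simply zero.
\item The defect is explicit: $\Delta Z\cdot(B-B^{c,\varepsilon})=\bigl(1-\chi_c(|\Delta Z|)\mathbf 1_{\{t\le T-\varepsilon\}}\bigr)\bigl(\Fc^1(\cdot,Z^1)-\Fc^1(\cdot,Z^2)\bigr)$.
\item For $t\le T-\varepsilon$ this vanishes unless $|\Delta Z|<c$, so it is bounded by $Lc$. On the terminal layer it is bounded by $2\|\Fc^1\|_\infty$. Both bounds hold pointwise, hence along whatever flow $B^{c,\varepsilon}$ generates.
\end{itemize}
This gives $Y^1(e,m)-Y^2(e,m)\le C(c+\varepsilon)$ with $C$ independent of $(c,\varepsilon)$ and of the flow, and letting $c,\varepsilon\downarrow0$ concludes. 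No limit of flows and no McKean--Vlasov equation with a discontinuous drift is ever needed. The rest of your argument then goes through unchanged: the exact cancellation, the zero mean of the $W^\circ$-integral via \eqref{eq:integral_cond}, and the signs of the leftover terms.
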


\begin{remark}
The proposition is the analogue, in the present Wasserstein setting, of the
classical comparison principle for {\rm BSDEs}. The additional assumption
\eqref{eq:cond_comparison} is specific to the measure-dependent formulation. Indeed,
in the proof one compares the two equations along a conditional law flow generated
by a drift chosen in order to cancel the difference of the generators in the
$z$-variable. For this drift to be admissible, it is not enough to assume the
usual Lipschitz continuity of the generator in $z$; one also needs enough
regularity in the measure argument after evaluating the generator along the fields
$Z^i$. This is precisely the role of \eqref{eq:cond_comparison}.

In the classical finite-dimensional case, or in the present framework when the
generator and terminal condition do not depend on the measure variable, this extra
condition disappears. In that situation the characteristic field $Z$ no longer
depends on $m$, see the characterization below, and the comparison argument
reduces to the standard one for Markovian {\rm BSDEs}.
\end{remark}

\section{Well-posedness and characterization of W-BSDE} \label{sec:BSDE_wellposedness}

We now turn to the well-posedness of the W-BSDE introduced above. The main idea is
to characterize the second component $Z$ through a fixed-point equation for a
field $p$. Given such a candidate field $p$, we associate with it a McKean-Vlasov flow whose
drift is determined by $\nabla_z\Fc$ evaluated at $p$. The characteristic
equation below then gives a closed relation for $p$, in the spirit of the
classical representation formula for the gradient of a Markovian BSDE.

The main result of this section states that, once this characteristic equation
admits a solution, one can reconstruct a W-BSDE solution
$(Y,Z)$.
Thus the well-posedness of the W-BSDE is reduced to the well-posedness of the
characteristic equation for its intrinsic gradient.

\smallskip
Let $t\in[0,T]$, $m\in\Pc(\R^d)$, and let
\[
    p=(p^1,\dots,p^d):[0,T)\times \R^d\times \Pc(\R^d)\longrightarrow \R^d
\]
be a Borel map. We associate with $p$ the following conditional law flow. For $s\in[t,T]$, set
\[
    \ell^{p,t,m}_s
    :=
    \Lc(S^{t,m}_s\mid \Gc_T)
    =
    \Lc(S^{t,m}_s\mid \Gc_s),
\]
where $(S^{t,m}_s)_{s\in[t,T]}$ is a process satisfying $\Lc(S^{t,m}_t)=m$ and solving
\begin{align} \label{eq:max_dynamic}
    \mathrm{d}S^{t,m}_s
    =
    \nabla_z\Fc
    \left(
        s,S^{t,m}_s,\ell^{p,t,m}_s,
        p(s,S^{t,m}_s,\ell^{p,t,m}_s)
    \right)\mathrm{d}s
    + \mathrm{d}W_s
    + \sigma_\circ\,\mathrm{d}W^\circ_s,
    \qquad s\in[t,T].
\end{align}
Here $\nabla_z\Fc$ denotes the gradient of $\Fc$ with respect to its last variable. Note that, under the assumptions stated below, this McKean-Vlasov equation is well posed.

\smallskip

For $(t,x)\in[0,T]\times\R^d$, we introduce the shifted process
\begin{equation}\label{eq:defN}
    N^{t,x}_s
    :=
    x+W_{s\vee t}-W_t+\sigma_\circ\bigl(W^\circ_{s\vee t}-W^\circ_t\bigr),
    \qquad s\in[0,T].
\end{equation}
Let $(\Nh,\Wh)$ be an independent copy of $(N,W)$ given $W^\circ$. More explicitly, for each $(t,x)$, we write
\[
    \Nh^{t,x}_s
    :=
    x+\Wh_{s\vee t}-\Wh_t+\sigma_\circ\bigl(W^\circ_{s\vee t}-W^\circ_t\bigr),
\]
whenever such notation is needed. Finally, for each $m\in\Pc(\R^d)$, let $U$ be an $\Hc_0$-measurable random variable, independent of the Brownian noises under consideration, such that $\Lc(U)=m$.

\medskip

Let $(F,d_F)$ be a Polish space, and fix $x_F\in F$. For a Borel map
\[
    f:[0,T]\times\R^d\times\R^d\times\Pc(\R^d)\times\R^d \longrightarrow F,
\]
we introduce a uniform bound combining boundedness and Lipschitz continuity in
the measure and gradient variables:
\[
    {\rm Lip}_T(f)
    :=
    \sup_{(t,x,y,m,z)} d_F\bigl(f(t,x,y,m,z),x_F\bigr)
    +
    \sup_{(t,x,y)}\sup_{(m,z)\neq(m',z')}
    \frac{ d_F\bigl(f(t,x,y,m,z),f(t,x,y,m',z')\bigr) }{ \Wc_\circ(m,m')+|z-z'| }.
\]
When the time horizon is clear from the context, we simply write ${\rm Lip}(f)$.
For the coefficients $(\Fc,G)$, we use this notation for the collection of maps \[ (t,x,y,m,z) \longmapsto \Big( \delta_mG(m)(y), \Fc(t,x,m,z), \delta_m\Fc(t,x,m,z)(y), \nabla_z\Fc(t,x,m,z) \Big), \] with the convention that each component is tested only in the variables on which it depends. We then set \[ \underline{{\rm Lip}}_T(\Fc,G) := {\rm Lip}_T \bigl( \delta_mG,\Fc,\delta_m\Fc,\nabla_z\Fc \bigr) \in[0,\infty]. \] Thus $\underline{{\rm Lip}}_T(\Fc,G)$ records, uniformly in time and in the spatial variables, the boundedness and the Lipschitz dependence of the terminal derivative, the generator, its measure derivative, and its $z$-gradient with respect to the measure and gradient arguments.

\begin{assumption} \label{assum:gradient}
    The time horizon $T$ and the coefficients $(\Fc,G)$ are such that there
    exists a Borel map
    \[
        p:[0,T)\times \R^d\times \Pc(\R^d)\longrightarrow \R^d,
    \]
    unique in the class of maps satisfying the following properties.

    \smallskip
    First, for every $\overline T<T$, there exist constants $\Kr\in(0,\infty)$
    and $\alpha\in(0,1)$, depending only on $\overline T$, $T$ and
    $\underline{{\rm Lip}}_T(\Fc,G)$, and locally bounded as functions of these
    quantities, such that
    \begin{align}
        |p(t,x,m)-p(t',x',m')|
        \le
        \Kr
        \left(
            |t-t'|^\alpha
            +
            |x-x'|
            +
            \Wc_\circ(m,m')
        \right),
        \label{eq:regularity_p_assumption}
    \end{align}
    for all
    \[
        (t,t',x,x',m,m')
        \in
        [0,\overline T]^2
        \times(\R^d)^2
        \times\Pc(\R^d)^2.
    \]

    \smallskip
    Second, $p$ satisfies the following fixed-point relation. For every
    $i=1,\dots,d$ and every
    \[
        (t,x,m)\in[0,T)\times\R^d\times\Pc(\R^d),
    \]
    one has
    \begin{align} \label{eq:chac:z}
        p^i(t,x,m)
        &=
        \E \left[
            \delta_m G(\ell^{p,t,m}_T)(N^{t,x}_T)
            \frac{W^i_T-W^i_t}{T-t}
        \right]
        \nonumber
        \\
        &\quad+
        \E \left[
            \int_t^T
            \Fc
            \left(
                s,N^{t,x}_s,\ell^{p,t,m}_s,
                p(s,N^{t,x}_s,\ell^{p,t,m}_s)
            \right)
            \frac{W^i_s-W^i_t}{s-t}\,\mathrm{d}s
        \right]
        \nonumber
        \\
        &\quad+
        \E \left[
            \int_t^T
            \delta_m\Fc
            \left(
                s,S^{t,m}_s,\ell^{p,t,m}_s,
                p(s,S^{t,m}_s,\ell^{p,t,m}_s)
            \right)(\Nh^{t,x}_s)
            \frac{\Wh^i_s-\Wh^i_t}{s-t}\,\mathrm{d}s
        \right].
    \end{align}
\end{assumption}

\begin{theorem} \label{thm:existence_WBSDE_gradient}
Assume that {\rm \Cref{assum:gradient}} holds. Then there exists a solution
$(Y,Z)$ of the {\rm W-BSDE} associated with $(\Fc,G)$ in the sense of
{\rm \Cref{def:W-BSDE}}. Moreover, this solution enjoys the following
regularity and identification properties.
For every $t\in[0,T)$, the map
\[
    m\longmapsto Y(t,m)
\]
admits a differentiable linear functional derivative, denoted by
\[
    (x,m)\longmapsto \nabla_x\delta_m Y(t,m)(x).
\]
In addition, the field $Z$ is a solution of the characteristic equation
{\rm \Cref{eq:chac:z}}, and one has, for every $(t,m)\in[0,T)\times\Pc(\R^d)$,
\[
    \nabla_x\delta_m Y(t,m)(x)
    =
    Z(t,x,m) = p(t,x,m),
    \qquad m(\mathrm{d}x)\text{-a.e.}
\]

\end{theorem}

\begin{remark}
$(i)$ {\rm \Cref{thm:existence_WBSDE_gradient}} gives both a well-posedness result and
a characterization of the {\rm W-BSDE}, under {\rm \Cref{assum:gradient}}. This
assumption is formulated directly in terms of the characteristic equation for the
gradient field. In particular, it can be verified by a Banach fixed point argument
under natural smallness conditions on the horizon $T$, and/or on the size of
$(\Fc,G)$ and their derivatives; see {\rm \cite{fabricemattiaDecoupling}} for a detailed
discussion.
Moreover, the connection with McKean-Vlasov {\rm FBSDEs}
developed in {\rm \Cref{sec:FBSDEs}} suggests that other existence mechanisms may
also be available, for instance under convexity or monotonicity assumptions.

\smallskip
Because of the central role played by {\rm \Cref{eq:chac:z}}, we prefer to
formulate the condition as an assumption. Indeed, a look at the proof, see {\rm \Cref{sec:proofs}}, shows that the well-posedness of the pair $(Y,Z)$ is essentially reduced
to the existence of a solution to the characteristic equation
{\rm \Cref{eq:chac:z}}. It is also worth mentioning that the regularity
condition {\rm \Cref{eq:regularity_p_assumption}} can clearly be weakened.
Moreover, since there is no particular reason to believe that the Banach fixed
point method or the {\rm FBSDE} formulation provides optimal conditions, stating the result under
{\rm \Cref{assum:gradient}} seems more appropriate.

\medskip

$(ii)$ The regularity of $Y$ may be surprising or strong at first sight. As mentioned in the introduction, this property
is, however, quite natural in the theory of {\rm BSDEs}, especially in the
Markovian setting. Indeed, let
\[
    (\Yh,\Zh)=(\Yh,\Zh^1,\ldots,\Zh^d)
\]
be a solution of a classical {\rm BSDE} with generator
$f:[0,T]\times\R^d\times\R^d\to\R$ and terminal condition
$g:\R^d\to\R$, namely such that
\[
    \Yh_t,\Zh_t\in\sigma\{W_s:\,s\in[0,t]\},
    \qquad
    \E\left[
        \sup_{t\in[0,T]}|\Yh_t|
        +
        \int_0^T|\Zh_s|^2\,\mathrm{d}s
    \right]<\infty,
\]
and
\begin{align*}
    \Yh_t
    =
    g(W_T)
    +
    \int_t^T f(s,W_s,\Zh_s)\,\mathrm{d}s
    -
    \int_t^T \Zh_s\cdot\mathrm{d}W_s .
\end{align*}
Under bounded Lipschitz assumptions on $f$ and boundedness of $g$, it is
well known, see for instance {\rm \cite[Theorem 4.2.]{ma2002representation}}, that there exists a deterministic
map $u:[0,T]\times\R^d\to\R$ such that
\[
    \Yh_t=u(t,W_t),
    \qquad
    \Zh_t=\nabla_x u(t,W_t)=:v(t,W_t).
\]
Moreover, the field $v=(v^1,\cdots,v^d)$ can be characterized through an equation of the form
\begin{align} \label{eq:classical_chac:z_bismut}
    v^i(t,x)
    =
    \E \Bigg[
        g(x+W_T-W_t)\frac{W^i_T-W^i_t}{T-t}
        +
        \int_t^T
        f\big(s,x+W_s-W_t,v(s,x+W_s-W_t)\big)
        \frac{W^i_s-W^i_t}{s-t}
        \,\mathrm{d}s
    \Bigg], \quad i=1,\ldots,d.
\end{align}

{\rm \Cref{eq:classical_chac:z_bismut}} can be viewed as the
exact analogue of {\rm \Cref{eq:chac:z}} in the present setting, where the
state variable is no longer a point $x\in\R^d$ but a probability measure
$m\in\Pc(\R^d)$. In the classical Markovian case, the well-posedness of the
{\rm BSDE} is closely related to the well-posedness of the equation satisfied by
$v=\nabla_x u$. In the same way, for the {\rm W-BSDE}, the well-posedness of
the pair $(Y,Z)$ is closely related to the well-posedness of the
characteristic equation {\rm \Cref{eq:chac:z}}. Our notion of {\rm W-BSDE}
therefore shares the same basic structure as the classical Markovian BSDE,
with the essential difference that the Markovian variable is now the
conditional law and the role of $\nabla_x u$ is played by
$\nabla_x\delta_m Y$. Thus, the {\rm W-BSDE} provides a probabilistic way to construct both a value
function on the Wasserstein space and its intrinsic gradient.

\medskip Let us emphasize that, compared with the classical Markovian BSDE setting, additional assumptions are needed for the well-posedness of the W-BSDE. These extra requirements come from the dependence on the measure variable, and not from the spatial variable itself. In particular, when $\delta_mG$ and $\Fc$ do not depend on the measure argument, the characteristic equation reduces to the classical one, and we recover the usual well-posedness conditions.

\medskip

$(iii)$ The well-posedness of our notion of {\rm W-BSDE} has interesting
consequences, which will be developed in {\rm \Cref{sec:conseq}}. In
particular, this object provides new results, especially with respect to the
conditions required in the study of subject with mean field components (mean field control, mean field games, $\cdots$), and it also gives a new,
and sometimes simpler, probabilistic viewpoint on known results.
\end{remark}

We next present a stability result which is a straight application of the Arzel\`a-Ascoli-Theorem applied to the sequence of gradients.
Let $\bigl(T^k,\Fc^k,G^k,p^k\bigr)_{k\ge 1}$ be a sequence such that, for every $k\ge 1$, the triple $(T^k,\Fc^k,G^k)$ satisfies {\rm \Cref{assum:gradient}}, with associated characteristic map
\[
    p^k:[0,T^k)\times \R^d\times \Pc(\R^d)\longrightarrow \R^d.
\]
Assume that there exist a time horizon $T>0$, a generator
\[
    \Fc:[0,T]\times\R^d\times\Pc(\R^d)\times\R^d\to\R,
\]
and a terminal condition
\[
    G:\Pc(\R^d)\to\R
\]
such that
\[
    T^k\longrightarrow T,
    \qquad
    \underline{{\rm Lip}}_{T^k}(\Fc^k,G^k)
    \longrightarrow
    \underline{{\rm Lip}}_{T}(\Fc,G).
\]

\medskip

Without loss of generality, we extend the maps $\Fc^k$ and $p^k$ to a common time interval $[0,\overline T]$, where
\[
    \overline T>\sup_{k\ge1}T^k\vee T,
\]
so that all convergence statements below are understood on a common domain.

\medskip

We further assume that the coefficients converge along every admissible flow. More precisely, for every $m \in \Pc(\R^d)$, one has
\[
    G^k(m)\longrightarrow G(m),
\]
and, for $m(\mathrm{d}x) \otimes\mathrm{d}t$-a.e.,
\[
    \lim_{k\to\infty}
    \sup_{z\in\R^d}
    \left|
        \Fc^k(t,x,m,z)-\Fc(t,x,m,z)
    \right|
    =
    0.
\]
\begin{corollary}
    [Stability]\label{coro:stability}
    Under the assumptions above, there exists a map
    $p:[0,T)\times\R^d\times\Pc(\R^d)\longrightarrow\R^d$ satisfying
    \eqref{eq:regularity_p_assumption} and \eqref{eq:chac:z} with respect to
    $(T,\Fc,G)$, and such that, up to a sub-sequence, for any $\mu \in \Lc$
    associated with initial time $t_0$,
    \begin{align*}
        \lim_{k \to \infty} \E \left[ \int_{t_0}^{T} |p(t,X_t,\mu_t)-p^k(t,X_t,\mu_t)|\,\mathrm{d}t \right] = 0.
    \end{align*}
    Moreover, there exists a {\rm W-BSDE} solution $(Y,Z)$ associated with the
    limiting coefficients $(\Fc,G)$ and maturity $T$ verifying, for each
    $(t,m) \in [0,T) \x \Pc(\R^d)$,
    \begin{align*}
        \nabla_x \delta_m Y(t,m)(x) =p(t,x,m),\quad m(\mathrm{d}x)\text{-a.e.}
    \end{align*}
    If, in addition, \eqref{eq:chac:z} admits at most one solution in the class
    defined by \eqref{eq:regularity_p_assumption}, then $(T,\Fc,G)$ satisfies
    {\rm \Cref{assum:gradient}} and the whole sequence converges.
\end{corollary}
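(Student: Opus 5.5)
The strategy is compactness for the characteristic fields $p^k$, followed by passage to the limit in the fixed-point relation \eqref{eq:chac:z}, and then an application of the construction in \Cref{thm:existence_WBSDE_gradient}.

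\emph{Step 1: compactness.} Fix $\overline T'<T$. Since $T^k\to T$, we have $\overline T'<T^k$ for $k$ large. Since $\underline{{\rm Lip}}_{T^k}(\Fc^k,G^k)$ converges, these quantities are bounded. By the local boundedness of the constants $\Kr,\alpha$ in \Cref{assum:gradient}, the estimate \eqref{eq:regularity_p_assumption} therefore holds on $[0,\overline T']\times\R^d\times\Pc(\R^d)$ with constants $\Kr',\alpha'$ independent of $k$. Uniform boundedness of $p^k$ comes from \eqref{eq:chac:z} itself: the Bismut weights $(W^i_s-W^i_t)/(s-t)$ have $L^1$ norm of order $(s-t)^{-1/2}$, which is integrable, and $\delta_mG^k,\Fc^k,\delta_m\Fc^k$ are uniformly bounded. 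The space $[0,\overline T']\times\R^d\times(\Pc(\R^d),\Wc_\circ)$ is not compact, so I would use Arzel\`a--Ascoli on the compact sets $[0,\overline T']\times\overline B_R\times\Kc$, where $\Kc$ runs over $\Wc_\circ$-compact (tight) subsets. Exhausting $[0,T)$ with $\overline T'\uparrow T$ and taking a diagonal subsequence gives $p^k\to p$ uniformly on such sets.

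The limit $p$ inherits \eqref{eq:regularity_p_assumption} on each compact set. Because $\Pc(\R^d)$ is separable, I would first obtain convergence on a countable dense family and then extend by equicontinuity, so that $p^k\to p$ pointwise everywhere with the same moduli. For the $L^1$ convergence along a given $\mu\in\Lc$, the law of $(X_t,\mu_t)$ is tight, because $\mu$ is continuous and $B$ is bounded. Combining pointwise convergence, uniform boundedness and equicontinuity, dominated convergence on $[t_0,\overline T']$ gives the claim, and the remaining interval $[\overline T',T]$ contributes at most $C(T-\overline T')$.

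\emph{Step 2: passing to the limit in \eqref{eq:chac:z}.} This is the main obstacle. First I show $\ell^{p^k,t,m}\to\ell^{p,t,m}$. The drifts $\nabla_z\Fc^k(\cdot,p^k)$ are uniformly Lipschitz in $(m,z)$ and bounded, but I have no convergence of $\nabla_z\Fc^k$ to $\nabla_z\Fc$ assumed, only of $\Fc^k$. I would get it from uniform convergence of $\Fc^k$ in $z$ together with the uniform Lipschitz bounds on $\nabla_z\Fc^k$, which make $\Fc^k(t,x,m,\cdot)$ uniformly $C^{1,1}$; a standard interpolation inequality then yields $\nabla_z\Fc^k\to\nabla_z\Fc$ locally uniformly in $z$. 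The same kind of argument handles $\delta_m\Fc^k$ and $\delta_mG^k$: their boundedness and Lipschitz property in $m$ give compactness, and the limits are identified as $\delta_m\Fc$ and $\delta_mG$ via the defining integral identity of the linear functional derivative. Once this is in place, a Gr\"onwall-type stability estimate for conditional McKean--Vlasov SDEs in $\Wc_\circ$ gives convergence of $\ell^{p^k,t,m}$.

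Next I pass to the limit in each term of \eqref{eq:chac:z}, using dominated convergence. The weights are integrable, the integrands are uniformly bounded, and pointwise convergence holds a.s. because each term is evaluated at $N^{t,x}_s$, at $S_s$ (Gaussian-smoothed laws, so the a.e.\ convergence sets are charged correctly), and at converging measures. This shows that $p$ solves \eqref{eq:chac:z} for $(T,\Fc,G)$.

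\emph{Step 3: constructing the W-BSDE solution.} I would rerun the construction of \Cref{thm:existence_WBSDE_gradient} with this $p$. That proof uses only the existence of a solution of \eqref{eq:chac:z} in the regularity class \eqref{eq:regularity_p_assumption}, not uniqueness. It yields $(Y,Z)$ with $Z=p=\nabla_x\delta_mY$. If uniqueness holds, then $p$ is the unique solution, so \Cref{assum:gradient} is satisfied. Every subsequence of $(p^k)$ has a further subsequence converging to this same $p$, so the whole sequence converges.
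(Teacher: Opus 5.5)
Your Steps 1 and 2 follow the paper's route. There is no written proof of this corollary in the paper; the template is the argument of \Cref{sec:existence_general}: uniform H\"older--Lipschitz bounds, a diagonal extraction on a countable dense subset of the separable space, domination by the Bismut bound $C/\sqrt{s-t}$, and passage to the limit in \eqref{eq:chac:z}. You are also right that convergence of $\delta_mG^k,\delta_m\Fc^k,\nabla_z\Fc^k$ is not among the hypotheses and must be derived.

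The genuine gap is Step 3. It is not true that the proof of \Cref{thm:existence_WBSDE_gradient} uses only existence of a solution of \eqref{eq:chac:z}. For a non-smooth $p$ there is no direct argument that $p$ is a gradient field. The symmetry \eqref{eq:p_symmetry} is what allows one to identify $\delta_mY$ with $q$, and hence $\nabla_x\delta_mY$ with $p$, and it is only established for smooth solutions. In the general case the paper approximates $(\Fc,G)$ by smooth coefficients and extracts a limit $\overline p$ of the associated smooth characteristic fields; this step itself relies on an external smooth well-posedness result. It then invokes the uniqueness part of \Cref{assum:gradient} to get $\overline p=p$. Rerunning this for $(T,\Fc,G)$ without uniqueness yields a W-BSDE solution whose gradient is some solution $\overline p$ of \eqref{eq:chac:z}. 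Nothing forces $\overline p$ to coincide with the limit $p$ of your sequence $(p^k)$, so the identity $\nabla_x\delta_mY=p$ claimed in the corollary is not obtained.

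The missing idea is that the approximating sequence is already given. Each $(T^k,\Fc^k,G^k)$ satisfies \Cref{assum:gradient}, uniqueness included. So \Cref{thm:existence_WBSDE_gradient} and its proof provide $(Y^k,Z^k)$ with $Z^k=p^k=\nabla_x\delta_mY^k$, the representation \eqref{eq:Yk_rep_existence} of $Y^k$, and $\delta_mY^k=q^k$ up to an $x$-independent term, with $q^k$ as in \eqref{eq:def_q_d} built from $(\Fc^k,G^k,p^k)$. Along the subsequence where $p^k\to p$, pass to the limit as in the second half of \Cref{sec:existence_general}:
\begin{itemize}
\item $Y^k\to Y$, with $Y$ given by \eqref{eq:def_Y_limit};
\item the identities \eqref{eq:WBSDE_k_limit} converge by dominated convergence and the It\^o isometry, which is where your $L^1$ convergence along flows is used;
\item the integral identity for $\delta_mY^k=q^k$ passes to the limit;
\item $\nabla_xq=p$ follows by differentiating \eqref{eq:def_q_d} in $x$ under the expectation, which gives exactly \eqref{eq:chac:z}.
\end{itemize}
Uniqueness is then needed only for the last sentence of the corollary, where your argument is fine.

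Three smaller points in Steps 1--2:
\begin{itemize}
\item $p^k$ is only $O((T-t)^{-1/2})$ near $T$, since $\delta_mG$ has no regularity in $y$. The tail on $[\overline T',T]$ is therefore $O(\sqrt{T-\overline T'})$ rather than $O(T-\overline T')$, which still suffices.
\item The drift $\nabla_z\Fc^k(\cdot,p^k)$ is only measurable in $x$. The stability of $\ell^{p^k,t,m}$ should therefore come from a Girsanov/total-variation estimate rather than a synchronous-coupling Gr\"onwall bound.
\item There is no regularity in $y$, hence no compactness, for $\delta_m\Fc^k,\delta_mG^k$. You can still identify the limits pointwise directly. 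With $m^\varepsilon=(1-\varepsilon)m+\varepsilon\delta_y$, the uniform Lipschitz bound in $m$ gives $\delta_mG^k(m)(y)-\int\delta_mG^k(m)(z)\,m(\mathrm dz)=\varepsilon^{-1}\bigl(G^k(m^\varepsilon)-G^k(m)\bigr)+O(\varepsilon)$ uniformly in $k$. The additive normalisation is then killed by the centred Bismut weights.
\end{itemize}
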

\section{Consequences and applications of W-BSDEs} \label{sec:conseq}

\subsection{W-BSDE and optimal control of McKean-Vlasov equations with common noise} \label{sec:optimal_control}

We now present one of the main motivations for introducing the notion of
{\rm W-BSDE}. Namely, we show that it provides a verification principle for
McKean-Vlasov control problems in the presence of common noise. In this
interpretation, the first component $Y$ plays the role of a value function
on the Wasserstein space, while the second component $Z$ plays the role of
its intrinsic gradient. The optimal feedback is then obtained by maximizing
the corresponding Hamiltonian.

\smallskip
This connection should be understood as the probabilistic counterpart of the
usual link between Markovian BSDEs and stochastic control in finite
dimension. Here, however, the relevant state variable is not the position of
a representative particle, but its conditional law.

\medskip
We consider a McKean-Vlasov control problem with common noise. Let $A$ be
a compact set. We denote by $\Ac$ the set of bounded Lipschitz feedback
controls $\alpha:[0,T]\times \R^d\times \Pc(\R^d)\longrightarrow A,$
and by $\Ac^\circ$  the set of bounded Lipschitz feedback
controls $\alpha^\circ:[0,T]\times \Pc(\R^d)\longrightarrow A$.
Let
\[
    b,L:[0,T]\times \R^d\times \Pc(\R^d)\times A
    \longrightarrow \R^d\times \R
\]
be bounded Borel maps, with $b$ Lipschitz in $(x,m,a)$, uniformly in
$t$. We fix an initial distribution $\nu\in\Pc(\R^d)$.
For any feedback control $\alpha\in\Ac$, we define the controlled
McKean-Vlasov dynamics by $   \Lc(X^{\nu,\alpha}_0)=\nu$,
and
\begin{align}
    \mathrm d X^{\nu,\alpha}_s
    &=
    b\Big(
        s,
        X^{\nu,\alpha}_s,
        \mu^\alpha_s,
        \alpha(s,X^{\nu,\alpha}_s,\mu^\alpha_s)
    \Big)\mathrm ds
    +\mathrm dW_s+\sigma_\circ\,\mathrm dW^\circ_s,
    \qquad
    \mu^\alpha_s
    :=
    \Lc(X^{\nu,\alpha}_s\mid\Gc_s) = \Lc(X^{\nu,\alpha}_s\mid\Gc_T).
    \label{eq:controlled_MKV_feedback}
\end{align}
The corresponding payoff is
\begin{align}
    J(\nu,\alpha)
    :=
    \E\left[
        \int_0^T
        L\Big(
            s,
            X^{\nu,\alpha}_s,
            \mu^\alpha_s,
            \alpha(s,X^{\nu,\alpha}_s,\mu^\alpha_s)
        \Big)\mathrm ds
        +
        G(\mu^\alpha_T)
    \right],
    \label{eq:payoff_MFC_feedback}
\end{align}
and the value function is
\[
    V_{\rm MFC}(\nu)
    :=
    \sup_{\alpha\in\Ac}J(\nu,\alpha).
    \label{eq:value_MFC_feedback}
\]

We shall also consider a second class of controls, where the controller uses
only the common noise. For $\alpha^\circ\in\Ac^\circ$, we define
$X^{\circ,\nu,\alpha^\circ}$ by $\Lc(X^{\circ,\nu,\alpha^\circ}_0)=\nu,$
and
\begin{align}
    \mathrm d X^{\circ,\nu,\alpha^\circ}_s
    &=
    b\Big(
        s,
        X^{\circ,\nu,\alpha^\circ}_s,
        \mu^{\circ,\alpha^\circ}_s,
        \alpha^\circ(s,\mu^{\circ,\alpha^\circ}_s)
    \Big)\mathrm ds
    +\mathrm dW_s+\sigma_\circ\,\mathrm dW^\circ_s,
    \quad
    \mu^{\circ,\alpha^\circ}_s
    :=
    \Lc(X^{\circ,\nu,\alpha^\circ}_s\mid\Gc_s) = \Lc(X^{\circ,\nu,\alpha^\circ}_s\mid\Gc_T).
    \label{eq:controlled_MKV_open_common}
\end{align}
The associated payoff and value are
\begin{align}
    J^\circ(\nu,\alpha^\circ)
    &:=
    \E\left[
        \int_0^T
        L\Big(
            s,
            X^{\circ,\nu,\alpha^\circ}_s,
            \mu^{\circ,\alpha^\circ}_s,
            \alpha^\circ(s,\mu^{\circ,\alpha^\circ}_s)
        \Big)\mathrm ds
        +
        G(\mu^{\circ,\alpha^\circ}_T)
    \right],
    \qquad
    V^\circ_{\rm MFC}(\nu)
    &:=
    \sup_{\alpha^\circ\in\Ac^\circ}J^\circ(\nu,\alpha^\circ).
    \label{eq:value_MFC_common_control}
\end{align}

We introduce the generator
\[
    \Fc(s,x,m,z)
    :=
    \sup_{a\in A}h(s,x,m,z,a),\quad h(s,x,m,z,a)
    :=
    L(s,x,m,a)+b(s,x,m,a)\cdot z.
\]
We assume that a measurable maximizer exists, i.e.\ that there is a measurable
map
$$\widehat a(s,x,m,z)\in\arg\max_{a\in A}h(s,x,m,z,a).$$
For the common-noise-adapted control problem, we assume that the drift has the
form $$b(t,x,m,a)=b_1(t,x,m)+b^\circ(t,m,a),$$ and we set, for $\bar z\in\R^d$,
\[
    h^\circ(s,m,\bar z,a)
    :=
    \int_{\R^d}L(s,x,m,a)\,m(\mathrm dx)+b^\circ(s,m,a)\cdot\bar z .
\]

Let $(Y,Z)$ be a W-BSDE solution associated with $(\Fc,G)$.
For $\alpha^\circ\in\Ac^\circ$, let $(Y^{\alpha^\circ},Z^{\alpha^\circ})$ be a
W-BSDE solution associated with $G$ and the generator
\[
    \Fc^{\alpha^\circ}(s,x,m,z)
    := L\bigl(s,x,m,\alpha^\circ(s,m)\bigr)+b\bigl(s,x,m,\alpha^\circ(s,m)\bigr)\cdot z ,
\]
and set $\bar Z^{\alpha^\circ}(s,m):=\int_{\R^d}Z^{\alpha^\circ}(s,x,m)\,m(\mathrm dx)$.
We now define the candidate optimal dynamics. Assume that there exists a
process $R$, with $\Lc(R_0)=\nu$, satisfying
\begin{align}
     \mathrm dR_s
     &=
     b\Big(
        s,
        R_s,
        \nu_s,
        \widehat a(s,R_s,\nu_s,Z(s,R_s,\nu_s))
     \Big)\mathrm ds
     +\mathrm dW_s+\sigma_\circ\,\mathrm dW^\circ_s,
     \qquad
     \nu_s
     :=
     \Lc(R_s\mid\Gc_s) = \Lc(R_s\mid\Gc_T).
     \label{eq:resolvant_McK}
\end{align}
The corresponding candidate optimal controls is
\[
    \widehat\alpha(s,x,m)
    :=
    \widehat a(s,x,m,Z(s,x,m)).
\]

\begin{proposition}
\label{prop:McK-fromBSDE}
Assume that the resolvent equation
\eqref{eq:resolvant_McK} is well posed.
Then the feedback control $\widehat\alpha$ is optimal for
$V_{\rm MFC}(\nu)$, and
\[
    Y(0,\nu)=V_{\rm MFC}(\nu).
\]
Assume moreover that the drift has the form
$b(t,x,m,a)=b_1(t,x,m)+b^\circ(t,m,a)$, and that $\alpha^\circ\in\Ac^\circ$ satisfies, for every $(s,m)$,
\[
    \alpha^\circ(s,m)\in\arg\max_{a\in A}h^\circ\bigl(s,m,\bar Z^{\alpha^\circ}(s,m),a\bigr).
\]
Then $\alpha^\circ$ is optimal for $V^\circ_{\rm MFC}(\nu)$, and
$Y^{\alpha^\circ}(0,\nu)=V^\circ_{\rm MFC}(\nu)$.

\end{proposition}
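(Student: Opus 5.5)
The plan is to test the W-BSDE identity \eqref{eq:WBSDE_def} at $t=e=0$ along the controlled flow, take expectations so that the common-noise stochastic integral vanishes, and compare the generator $\Fc$ with the Hamiltonian $h$ evaluated at the chosen control. For the first claim, fix $\alpha\in\Ac$ and let $\mu^\alpha$ and $X^{\nu,\alpha}$ be as in \eqref{eq:controlled_MKV_feedback}. The drift $B(s,x,m):=b(s,x,m,\alpha(s,x,m))$ is bounded, and it is Lipschitz in $(x,m)$ because $b$ and $\alpha$ are. Hence $\mu^\alpha\in\Lc$ with $e=0$. Applying \Cref{def:W-BSDE} at $t=0$ gives
\[
Y(0,\nu)=G(\mu^\alpha_T)+\E\Big[\int_0^T\big(\Fc(r,X_r,\mu^\alpha_r,Z_r)-Z_r\cdot B(r,X_r,\mu^\alpha_r)\big)\mathrm dr\,\Big|\,\Gc_T\Big]-\int_0^T\E[Z_r\mid\Gc_T]\cdot\sigma_\circ\,\mathrm dW^\circ_r,
\]
where $Z_r:=Z(r,X_r,\mu^\alpha_r)$.

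The integrability condition \eqref{eq:integral_cond} gives $\E\int_0^T|\E[Z_r\mid\Gc_T]|^2\mathrm dr<\infty$. Moreover, since $\mu^\alpha_r=\Lc(X_r\mid\Gc_r)=\Lc(X_r\mid\Gc_T)$, the integrand $\E[Z_r\mid\Gc_T]=\int Z(r,x,\mu^\alpha_r)\mu^\alpha_r(\mathrm dx)$ is $\Gc_r$-adapted. The stochastic integral is therefore a true martingale with zero mean. Note that $Y(0,\nu)$ is deterministic. Taking expectations yields
\[
Y(0,\nu)=\E\Big[G(\mu^\alpha_T)+\int_0^T\big(\Fc(r,X_r,\mu^\alpha_r,Z_r)-Z_r\cdot b(r,X_r,\mu^\alpha_r,\alpha_r)\big)\mathrm dr\Big].
\]
By definition of the supremum, $\Fc(r,x,m,z)-z\cdot b(r,x,m,a)\ge L(r,x,m,a)$ for every $a\in A$. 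It follows that $Y(0,\nu)\ge J(\nu,\alpha)$.

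For the reverse inequality, run the same computation along the flow $\nu_s$ of \eqref{eq:resolvant_McK}. This flow is admissible because the resolvent equation is assumed well posed and its drift is bounded. Along it we have equality $\Fc=h(\cdot,\widehat\alpha)$ pointwise, so $Y(0,\nu)=J(\nu,\widehat\alpha)$. One technical gap must be addressed here: $\widehat\alpha$ need not be Lipschitz, so it may not belong to $\Ac$. I would handle this by reading $V_{\rm MFC}$ as the supremum over feedbacks for which the dynamics is well posed, or by approximating $\widehat\alpha$ by Lipschitz feedbacks using the Lipschitz regularity of $Z$ in $m$. This membership issue is the main point to treat carefully. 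Admissibility in $\Lc$ itself only requires well-posedness, so the verification identity holds regardless.

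The second claim follows the same pattern with one extra averaging step. For $\beta\in\Ac^\circ$, the flow $\mu^{\circ,\beta}$ is admissible, and we test the W-BSDE for $\Fc^{\alpha^\circ}$ along it. Taking expectations gives
\[
Y^{\alpha^\circ}(0,\nu)=\E\Big[G+\int_0^T\big(\Fc^{\alpha^\circ}-Z^{\alpha^\circ}\cdot b(\cdot,\beta_r)\big)\mathrm dr\Big],
\]
and the integrand equals $L(\alpha^\circ)+Z^{\alpha^\circ}\cdot(b^\circ(\alpha^\circ)-b^\circ(\beta))$. The key observation is that $\alpha^\circ$ and $\beta$ depend only on $(r,\mu_r)$. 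We can therefore condition on $\Gc_T$ and integrate in $x$ against $\mu_r=\Lc(X_r\mid\Gc_T)$. This replaces $Z^{\alpha^\circ}$ by $\bar Z^{\alpha^\circ}(r,\mu_r)$ and $L$ by $\int L\,\mathrm d\mu_r$. The integrand then becomes $h^\circ(r,\mu_r,\bar Z^{\alpha^\circ},\alpha^\circ)-b^\circ(\beta)\cdot\bar Z^{\alpha^\circ}$. By the argmax property of $\alpha^\circ$, this is at least $h^\circ(\beta)-b^\circ(\beta)\cdot\bar Z^{\alpha^\circ}=\int L(\beta)\,\mathrm d\mu_r$. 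Hence $Y^{\alpha^\circ}(0,\nu)\ge J^\circ(\nu,\beta)$. Equality holds for $\beta=\alpha^\circ$, which here lies in $\Ac^\circ$ by assumption. This proves $Y^{\alpha^\circ}(0,\nu)=V^\circ_{\rm MFC}(\nu)$.
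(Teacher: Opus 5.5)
Your proposal is correct and follows the same route as the paper. You test the W-BSDE at $t=0$ along the controlled flow, remove the common-noise integral by taking expectations, and use $\Fc=\sup_a h$ to get the upper bound, with equality at $\widehat\alpha$. For the second part you condition on $\Gc_T$, average against $\mu_r$, and use the argmax property of $\alpha^\circ$ for $h^\circ$, with equality at $\alpha^\circ$. The question of whether $\widehat\alpha\in\Ac$, which you flag, is passed over silently in the paper; the paper implicitly adopts your reading (a), namely that well-posedness of \eqref{eq:resolvant_McK} makes $\widehat\alpha$ admissible. Your alternative (b) would need more than the Lipschitz regularity of $Z$, since $\widehat a$ is only assumed measurable.
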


\begin{remark}
The first part is a verification result for feedback controls depending on the
state and on the conditional law. The {\rm W-BSDE} identifies the relevant
Wasserstein gradient field $Z$, and an optimal feedback is obtained by maximizing
the Hamiltonian pointwise. The resolvent equation \eqref{eq:resolvant_McK} is then
the closed-loop McKean-Vlasov equation obtained after inserting this feedback
into the dynamics.
As mentioned in the introduction, this gives a direct alternative to maximum
principle methods; see, for instance,
{\rm \cite{andersson2011maximum, buckdahn2011general, Carmona2013ForwardBackwardSD}}. Once the {\rm W-BSDE} is constructed, the
optimal control follows from the Hamiltonian maximization, without requiring
convexity assumptions on the control problem. Note that the Hamiltonian $\Fc$
grows linearly in $z$, and is therefore not bounded in $z$ as required in
$\underline{{\rm Lip}}_T(\Fc,G)$: the existence of $(Y,Z)$ is not a direct
application of {\rm \Cref{thm:existence_WBSDE_gradient}}, but the extension of
the latter to generators with linear growth in $z$ is immediate.

\smallskip

The second part concerns controls adapted only to the common noise: the
controller uses a single action for all states, so the maximization is performed
after averaging, and the optimal control is determined by the conditional mean
$\bar Z^{\alpha^\circ}$ of the gradient field. Since $\Fc^{\alpha^\circ}$ is
affine in $z$, the existence of $(Y^{\alpha^\circ},Z^{\alpha^\circ})$ for a given
$\alpha^\circ$ follows from the extension of {\rm \Cref{thm:existence_WBSDE_gradient}}
to linear growth; the consistency condition on $\alpha^\circ$ is a fixed point,
which we do not address here. A direct characterization of $V^\circ_{\rm MFC}$, without the fixed-point
condition, would require a theory of W-BSDEs whose generator depends on the
conditional law of $Z$. We leave this extension for future work.

\end{remark}

\subsection{W-BSDE and mean field games} \label{sec:mfg}

We have just seen that W-BSDEs provide a verification principle for
McKean-Vlasov control problems with common noise. We now explain that the
same point of view can also be used for mean field games. The situation is
slightly different. In mean field control, the flow of conditional laws is
chosen by the controller through the controlled dynamics. In a mean field
game, the flow of measures is instead an equilibrium object: it must be
consistent with the distribution of a representative player who optimally
responds to this very flow.

\smallskip
The purpose of this section is to show that this fixed-point structure can be
encoded in a W-BSDE. The idea is to enlarge the state space and to consider
a two-component system. The first component represents the state of a
representative player who solves an optimal control problem against a frozen
flow of measures. The second component carries the candidate equilibrium
population dynamics. The W-BSDE then couples these two objects: its first
gradient component gives the optimal feedback of the representative player,
while the second component forces consistency of the equilibrium flow.

\smallskip
This provides a probabilistic verification argument for mean field games with
common noise. In particular, once the characteristic equation for the MFG
gradient admits a solution, the associated feedback
generates an equilibrium flow. The W-BSDE on the enlarged Wasserstein space
then gives the corresponding value representation.

\medskip

Let $A$ be a compact metric space, let $\ell\ge 1$, and set $d:=2\ell.$ The $\R^d$-valued Brownian motion $W$ will be denoted $W=(W^1,W^2)=(W^{1,1},\cdots,W^{1,\ell}, W^{2,1},\cdots,W^{2,\ell})$.
We are given bounded measurable maps
\[
    b:[0,T]\times \R^\ell\times \Pc(\R^\ell)\times A\to \R^\ell,
    \qquad
    L:[0,T]\times \R^\ell\times \Pc(\R^\ell)\times A\to \R,
\]
with $b$ Lipschitz in $(x,m,a)$, uniformly in
$t$, and a terminal reward
\[
    g:\R^\ell\times \Pc(\R^\ell)\to \R.
\]
The common noise coefficient is denoted by $\sigma_\circ\in\R^\ell$, and $\G=(\Gc_t)_{t\in[0,T]}$ is the filtration generated by the common noise.

\medskip

We first recall the notion of mean field game solution used in this section. Let $\alpha:[0,T]\times \R^\ell\times \Pc(\R^\ell)\to A$
be a feedback control which is Lipschitz in $(x,m)$ uniformly in $t \in [0,\overline{T}]$ for each $\overline{T}$. We say that a $\Pc(\R^\ell)$-valued, $\G$-adapted, continuous process
\[
    \mu=(\mu_t)_{t\in[0,T]}
\]
is a mean field game solution associated with the feedback $\alpha$ if
\[
    \mu_t
    =
    \Lc(X^{\alpha,\mu}_t\mid \Gc_t)
    =
    \Lc(X^{\alpha,\mu}_t\mid \Gc_T),
    \qquad t\in[0,T],
\]
where, for any admissible feedback $\beta:[0,T]\times\R^\ell\times\Pc(\R^\ell)\to A$, the state process $X^{\beta,\mu}$ is defined by
\begin{align*}
    \mathrm{d}X^{\beta,\mu}_t
    =
    b\bigl(t,X^{\beta,\mu}_t,\mu_t,\beta(t,X^{\beta,\mu}_t,\mu_t)\bigr)\,\mathrm{d}t
    +
    \mathrm{d}W^1_t
    +
    \sigma_\circ\,\mathrm{d}W^\circ_t,
    \qquad
    \Lc(X^{\beta,\mu}_0)=\mu_0.
\end{align*}
The optimality condition reads
\begin{align*}
    J_\mu(\alpha)
    =
    \sup_{\beta} J_\mu(\beta), \quad J_\mu(\beta)
    :=
    \E \left[
        g(X^{\beta,\mu}_T,\mu_T)
        +
        \int_0^T
        L\bigl(t,X^{\beta,\mu}_t,\mu_t,\beta(t,X^{\beta,\mu}_t,\mu_t)\bigr)\,\mathrm{d}t
    \right].
\end{align*}

\medskip

For $(t,x,\pi,z)\in [0,T]\times\R^\ell\times\Pc(\R^\ell)\times\R^\ell$, define the Hamiltonian
\begin{align*}
    H(t,x,\pi,z)
    :=
    \sup_{a\in A} h(t,x,\pi,z,a),
    \qquad
    h(t,x,\pi,z,a)
    :=
    b(t,x,\pi,a)\cdot z+L(t,x,\pi,a).
\end{align*}
We assume that there exists a measurable maximizer
\[
    \widehat\alpha:[0,T]\times\R^\ell\times\Pc(\R^\ell)\times\R^\ell\to A
\]
such that
\[
    H(t,x,\pi,z)
    =
    h\bigl(t,x,\pi,z,\widehat\alpha(t,x,\pi,z)\bigr),
\]
and we set
\[
    \widehat b(t,x,\pi,z)
    :=
    b\bigl(t,x,\pi,\widehat\alpha(t,x,\pi,z)\bigr).
\]

\medskip

The equilibrium feedback will be obtained through a characteristic equation. Let
\[
    p=(p^1,\dots,p^\ell):[0,T)\times\R^\ell\times\Pc(\R^\ell)\to\R^\ell
\]
be a Borel map. For $(t,m)\in[0,T]\times\Pc(\R^\ell)$, define the conditional law flow
\[
    \Rr^{p,t,m}_s
    :=
    \Lc(Y^{t,m}_s\mid \Gc_T)
    =
    \Lc(Y^{t,m}_s\mid \Gc_s),
    \qquad s\in[t,T],
\]
where $Y^{t,m}$ satisfies $\Lc(Y^{t,m}_t)=m$ and
\begin{align}
    \mathrm{d}Y^{t,m}_s
    =
    \widehat b
    \left(
        s,Y^{t,m}_s,\Rr^{p,t,m}_s,
        p(s,Y^{t,m}_s,\Rr^{p,t,m}_s)
    \right)\mathrm{d}s
    +
    \mathrm{d}W^1_s
    +
    \sigma_\circ\,\mathrm{d}W^\circ_s.
    \label{eq:MFG_equilibrium_flow}
\end{align}
We then consider the fixed-point relation
\begin{align} \label{eq:gradient_MFG}
        p^i(t,x,m)
        &=
        \E \left[
            g(N^{t,x}_T,\Rr^{p,t,m}_T)
            \frac{W^{1,i}_T-W^{1,i}_t}{T-t}
        \right]
        \nonumber
        \\
        &\quad+
        \E \left[
            \int_t^T
            H\left(
                s,N^{t,x}_s,\Rr^{p,t,m}_s,
                p(s,N^{t,x}_s,\Rr^{p,t,m}_s)
            \right)
            \frac{W^{1,i}_s-W^{1,i}_t}{s-t}\,\mathrm{d}s
        \right],
        \qquad i=1,\dots,\ell,
\end{align}
where
\[
    N^{t,x}_s
    :=
    x+W^1_{s\vee t}-W^1_t
    +
    \sigma_\circ\bigl(W^\circ_{s\vee t}-W^\circ_t\bigr).
\]
Equation \eqref{eq:gradient_MFG} is the probabilistic representation of the gradient of the MFG value function along the equilibrium flow. In addition, $p$ satisfies the regularity \Cref{eq:regularity_p_assumption} with the dependence $\underline{{\rm Lip}}_T(\Fc,G)$ being replaced by ${\rm Lip}_T\bigl(g,H,\widehat{b}\bigr)$.

\medskip

We now explain how this MFG structure can be encoded into a W-BSDE on the enlarged space $\R^d=\R^\ell\times\R^\ell$. For $m\in\Pc(\R^d)$, we denote by $m_1$ and $m_2$ its first and second marginals:
\[
    m_1(\mathrm{d}x)
    :=
    m(\mathrm{d}x\times\R^\ell),
    \qquad
    m_2(\mathrm{d}x)
    :=
    m(\R^\ell\times \mathrm{d}x).
\]
Define the terminal condition
\[
    G(m)
    :=
    \int_{\R^\ell}
    g(x,m_2)\,m_1(\mathrm{d}x),
    \qquad m\in\Pc(\R^d).
\]
Given a solution $p$ of \eqref{eq:gradient_MFG}, define the generator
\[
    \Fc_p(t,x,m,z)
    :=
    H(t,x_1,m_2,z_1)
    +
    z_2\cdot
    \widehat b\bigl(t,x_2,m_2,p(t,x_2,m_2)\bigr),
\]
where
\[
    x=(x_1,x_2)\in\R^\ell\times\R^\ell,
    \qquad
    z=(z_1,z_2)\in\R^\ell\times\R^\ell.
\]

\medskip

The construction above separates two roles which are intertwined in the MFG
equilibrium condition. The variable $x_1$ is used to describe the
representative player's optimization problem, while $x_2$ is used to
transport the candidate equilibrium distribution. This is why the terminal
condition depends on the first marginal through the payoff variable and on
the second marginal through the equilibrium population: $G(m)=\int_{\R^\ell} g(x,m_2)\,m_1(\mathrm{d}x).$
Similarly, the generator $\Fc_p$ has two parts. The first one,
\[
    H(t,x_1,m_2,z_1),
\]
is the Hamiltonian of the representative player's control problem when the
population flow is $m_2$. The second one,
\[
    z_2\cdot \widehat b(t,x_2,m_2,p(t,x_2,m_2)),
\]
propagates the equilibrium flow under the feedback obtained from the
characteristic field $p$. Thus, if the first component of the W-BSDE
gradient satisfies
\[
    Z^1_p(t,(x_1,x_2),m)=p(t,x_1,m_2),
\]
then the W-BSDE contains both the optimality condition and the consistency
condition of the MFG. The enlarged state is driven by the idiosyncratic noise $W=(W^1,W^2)$, with
$W^1,W^2$ independent and $\R^\ell$-valued, and by the same scalar common noise
$W^\circ$ acting on both components, so that the common noise coefficient on
$\R^d$ is $\boldsymbol\sigma_\circ:=(\sigma_\circ,\sigma_\circ)\in\R^d .$
Component wise, both coordinates are therefore driven by $\sigma_\circ\,\mathrm{d}W^\circ$,
as in \eqref{eq:MFG_equilibrium_flow}.

\begin{proposition}[MFG equilibrium and W-BSDE]
    Assume that $(T,g,H,\widehat{b})$ is such that {\rm \Cref{eq:gradient_MFG}} admits a unique solution
    \[
        p:[0,T)\times\R^\ell\times\Pc(\R^\ell)\to\R^\ell
    \]
    satisfying the regularity of {\rm \Cref{eq:regularity_p_assumption}}.
    Then the feedback
    \[
        \alpha^\star(t,x,\pi)
        :=
        \widehat\alpha\bigl(t,x,\pi,p(t,x,\pi)\bigr)
    \]
    generates a mean field game equilibrium.
    More precisely, let $\mu$ be the conditional law flow of the state process
    \[
        \mathrm{d}X_t^{\star}
        =
        b\bigl(t,X_t^{\star},\mu_t,\alpha^\star(t,X_t^{\star},\mu_t)\bigr)\,\mathrm{d}t
        +
        \mathrm{d}W^1_t
        +
        \sigma_\circ\,\mathrm{d}W^\circ_t,
        \qquad
        \mu_t=\Lc(X_t^\star\mid\Gc_t).
    \]
    Then $\mu$ is a mean field game solution associated with $\alpha^\star$.  Moreover, suppose that there exists a solution
    \[
        (Y_p,Z_p)=(Y_p,(Z^1_p,Z^2_p))
    \]
    of the W-BSDE on $\Pc(\R^d)$ associated with the coefficients $(\Fc_p,G)$ such that
    \[
        Z^1_p\bigl(t,(x_1,x_2),m\bigr)
        =
        p(t,x_1,m_2),
    \]
    for a.e.\ $(t,x_1,x_2,m)$. Then the W-BSDE provides the corresponding MFG value representation. In particular,
    \[
        Y_p\bigl(0,\mu_0\otimes\mu_0\bigr)
    \]
    coincides with the value of the mean field game starting from the initial distribution $\mu_0=\Lc(X_0)$.
\end{proposition}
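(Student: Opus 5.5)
The plan is to reduce the MFG statement to the mean field control verification of \Cref{prop:McK-fromBSDE}, applied against a \emph{frozen} flow. The fixed point \eqref{eq:gradient_MFG} then plays the role of \Cref{eq:chac:z} for a W-BSDE whose generator no longer depends on the measure variable.

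\textbf{Step 1: construct the equilibrium flow.} Set $\mu_s:=\Rr^{p,0,\mu_0}_s$, the conditional law flow of \eqref{eq:MFG_equilibrium_flow}. This is well posed by Lipschitz continuity of $\widehat b$ and the regularity \eqref{eq:regularity_p_assumption} of $p$, together with \cite[Theorem A.3.]{djete2019mckean}. By construction $X^\star=Y^{0,\mu_0}$ has conditional laws $\mu$, so the consistency part of the equilibrium definition holds trivially. It remains to show optimality of $\alpha^\star$ against this frozen $\mu$.

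\textbf{Step 2: a frozen-flow field.} Introduce the measure-independent generator $f(s,x,z):=H(s,x,\mu_s,z)$ and terminal data $g(\cdot,\mu_T)$. These are random only through $\Gc$. Conditioning on the whole common noise path and repeating the argument behind \eqref{eq:classical_chac:z_bismut}, I would show two things along the flow $m_s=\mu_s$:
\begin{itemize}[label={}]
\item $v(s,x):=p(s,x,\mu_s)$ is the gradient $\nabla_x u$ of the decoupling field $u(s,x)$ of the conditional BSDE;
\item $u$ solves $u(s,N_s)=g(N_T,\mu_T)+\int_s^T H(r,N_r,\mu_r,v(r,N_r))\,\mathrm dr-\int_s^T v\cdot(\mathrm dW^1_r+\sigma_\circ\mathrm dW^\circ_r)$, up to a $\Gc$-martingale correction.
\end{itemize}
The key identity is the flow property: $\Rr^{p,t,\mu_t}_s=\mu_s$ for $s\ge t$. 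This holds by uniqueness of \eqref{eq:MFG_equilibrium_flow} started from $(t,\mu_t)$, and it makes \eqref{eq:gradient_MFG} evaluated at $m=\mu_t$ exactly the Bismut--Elworthy representation of $\nabla_x u(t,x)$. Uniqueness of \eqref{eq:gradient_MFG}, combined with uniqueness for the conditional BSDE, identifies the two fields.

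\textbf{Step 3: verification.} For any admissible $\beta$, apply Itô to $u(s,X^{\beta,\mu}_s)$, with Girsanov (bounded drift $b$) to pass from $N$ to $X^{\beta,\mu}$. Taking expectations kills the stochastic integrals against $W^1$ and $W^\circ$. Since $H\ge h(\cdot,\beta)$, this gives $\E[u(0,X_0)]\ge J_\mu(\beta)$, with equality for $\beta=\alpha^\star$ because $\widehat\alpha$ attains the sup. Hence $\mu$ is an equilibrium and the value equals $\int u(0,x)\,\mu_0(\mathrm dx)$.

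\textbf{Step 4: value representation.} Test the W-BSDE for $(\Fc_p,G)$ along the admissible flow generated by the two-component process $(X^\star,X^\star)$. This uses the drift $(\widehat b(\cdot,p),\widehat b(\cdot,p))$ with independent $W^1,W^2$ and the shared $W^\circ$; the initial law is $\mu_0\otimes\mu_0$ (independent copies). The second marginal of the resulting flow is $\mu$. In \eqref{eq:WBSDE_def}, the $z_2$ parts cancel, since the drift of the second component equals $\widehat b$ in $\Fc_p$. For the first component, $Z^1_p=p$ gives $H-Z^1_p\cdot \widehat b=L$. Taking expectations at $t=0$ then yields $Y_p(0,\mu_0\otimes\mu_0)=J_\mu(\alpha^\star)$.

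The main obstacle is Step 2: justifying that the \emph{deterministic} field $p(\cdot,\cdot,\mu_\cdot)$ is the gradient of the random, $\Gc$-dependent decoupling field. This requires a conditional (given $W^\circ$) version of the Ma--Zhang representation with a random generator adapted to the common noise. I would first try to handle this by freezing a regular version of the $W^\circ$-path and using the independence of $W^1$ from $W^\circ$.
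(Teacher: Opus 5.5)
Your argument is sound in outline, but it takes a genuinely different route from the paper.

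\textbf{How the paper argues.} The paper never freezes the flow. It sets $\Pr^1(t,x,m):=p(t,x_1,m_2)$ and completes it with a second component $\Pr^2$ to a solution of \eqref{eq:chac:z} for $(\Fc_p,G)$ on the doubled space. This works because the flow $\Ec^{t,m}$ does not see $\Pr^2$ and the $\delta_m$-terms drop out of the first component. The existence theorem then gives $(Y_p,Z_p)$ with $Z_p=\Pr$. The paper tests this single W-BSDE along the flows with drift $\bigl(b(\cdot,\beta(\cdot)),\overline b\bigr)$ for \emph{every} admissible $\beta$. The $z_2$-terms cancel, so $J_{\nu^2}(\beta)\le Y_p(0,\mu_0\otimes\mu_0)$, with equality at $\alpha^\star$. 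Consistency follows because the two coordinates have identical dynamics when $\beta=\alpha^\star$.

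\textbf{What each route buys.} In the paper, one identity delivers optimality, consistency and the value at once. The price is that even the equilibrium claim depends on existence on the doubled space. That existence involves $\delta_m$ of $\widehat b(\cdot,p(\cdot))$, which is more than \eqref{eq:regularity_p_assumption} provides, and the paper handles it by smoothing and approximation. Your route gets consistency for free from the definition of $\mu$. It proves optimality by a classical stochastic-HJB verification against the frozen flow, where the flow property $\Rr^{p,t,\mu_t}_s=\mu_s$ turns \eqref{eq:gradient_MFG} at $m=\mu_t$ into a conditional Bismut formula. You then use the enlarged W-BSDE only along the one flow generated by $\alpha^\star$, where its existence is a hypothesis of the statement. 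This separates the equilibrium claim from the W-BSDE machinery. The cost is a conditional identification of the $W^1$-integrand.

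\textbf{Three adjustments to Steps 2--3.}

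(i) Uniqueness of \eqref{eq:gradient_MFG} is the wrong tool for the identification, and you do not need it. That uniqueness holds among deterministic fields of $(t,x,m)$ coupled to their own flows, not among random fields along a fixed flow. Instead, define directly $u(t,x):=\E\bigl[g(N^{t,x}_T,\mu_T)+\int_t^T H(s,N^{t,x}_s,\mu_s,p(s,N^{t,x}_s,\mu_s))\,\mathrm ds\,\big|\,\Gc_t\bigr]$. Gaussian integration by parts in $W^1$, conditionally on $\Gc_T$, followed by \eqref{eq:gradient_MFG} at $m=\mu_t$, gives $\nabla_xu(t,x)=p(t,x,\mu_t)$ at once. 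This is exactly how the paper obtains $\nabla_xq=p$.

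(ii) Identify the $W^1$-integrand of $u(s,N_s)$ locally. Conditionally on $\Hc_s\vee\Gc_T$, integration by parts on the single increment $W^1_{s+h}-W^1_s$ gives $h^{-1}\E[(u(s+h,N_{s+h})-u(s,N_s))(W^1_{s+h}-W^1_s)\mid\Hc_s]=\E[p(s+h,N_{s+h},\mu_{s+h})\mid\Hc_s]$. This tends to $p(s,N_s,\mu_s)$. Do not freeze the whole $W^\circ$-path and solve a pathwise BSDE in $W^1$ alone. That yields the anticipative value field, whose gradient satisfies the Bismut identity with $\E[\cdot\mid\Gc_T]$ instead of $\E[\cdot\mid\Gc_t]$. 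It is in general not $p(\cdot,\cdot,\mu_\cdot)$.

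(iii) Since $u$ carries a $W^\circ$-martingale part, a plain ``It\^o on $u(s,X^{\beta,\mu}_s)$'' is not available. Use the BSDE for $u(s,N_s)$ instead, combined with two facts: a Girsanov change acting on $W^1$ only, which leaves the law of $(W^\circ,\mu)$ unchanged, and weak uniqueness of the frozen-environment SDE for $X^{\beta,\mu}$.
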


\begin{remark}
The proposition should be read as a verification result for {\rm MFGs}. The
characteristic equation \eqref{eq:gradient_MFG} provides the candidate
gradient of the value function of the representative player, evaluated along
the candidate equilibrium flow. Once such a field $p$ is constructed, the
optimal feedback is given by
\[
    \alpha^\star(t,x,\pi)
    =
    \widehat\alpha(t,x,\pi,p(t,x,\pi)).
\]
The proposition then shows that this feedback generates a {\rm MFG} solution.

\smallskip
In this sense, the search for a {\rm MFG} equilibrium is reduced to the
resolution of the fixed-point relation \eqref{eq:gradient_MFG}. This gives a
compact and probabilistic way of constructing $($closed--loop$)$ equilibria, under relatively
mild assumptions, even in the presence of common noise. The common noise case
is usually delicate because the equilibrium flow is random and must be
consistent with the conditional law of the optimally controlled state. Here,
this consistency is encoded directly in the flow
\[
    \Rr^{p,t,m}_s
    =
    \Lc(Y^{t,m}_s\mid\Gc_s),
\]
which is generated by the feedback obtained from $p$.

\smallskip
Compared with the usual approaches based on the master equation $($see  {\rm \citeauthor*{cardaliaguet2015master}~\cite{cardaliaguet2015master}} or {\rm \citeauthor*{GangboMean2022}~\cite{GangboMean2022}}$)$ or on
fixed-point arguments for stochastic flows of measures $(${\rm \citeauthor*{carmona2018probabilisticII}~\cite{carmona2018probabilisticII}}$)$, this representation
appears to offer a different and rather direct viewpoint. It also suggests a
possible route toward numerical methods for {\rm MFGs}: instead of solving
simultaneously for a value function and an equilibrium flow, one may try to
solve the characteristic equation for the gradient field $p$, and then
recover both the equilibrium feedback and the value representation.

\smallskip
This formulation is made possible by the use of the {\rm W-BSDE}. Indeed,
the W-BSDE provides, at the same time, the value of the game and the gradient
field needed to define the optimal response of the representative player.
Thus the pair $(Y_p,Z_p)$ encodes both sides of the {\rm MFG} equilibrium:
the optimality condition through the first component of the gradient, and the
consistency of the equilibrium flow through the enlarged state dynamics.
\end{remark}

\subsection{W-BSDEs and McKean-Vlasov FBSDEs} \label{sec:FBSDEs}

The previous two sections showed that the {\rm W-BSDE} provides a
verification tool for McKean-Vlasov control problems and for mean field
games. In both cases, the central object is the field $Z$, which plays the
role of a gradient of the value function on the Wasserstein space and
determines the optimal feedback.
The purpose of this section is to show that the field $Z$ can be recovered dynamically as
the backward component of a McKean-Vlasov FBSDE. This provides a
characteristic representation of the W-BSDE, complementary to the control
and game interpretations developed above.

\smallskip
Let $(e,m)\in[0,T]\times\Pc(\R^d)$. We consider a forward-backward system
whose conditional law enters both the forward and the backward equations. More
precisely, we look for a triple $(X^{e,m},\Yc^{e,m},\Zc^{e,m})$
such that
\[
    \E\left[
        \sup_{t\in[e,T]}|\Yc^{e,m}_t|^2
        +
        \int_e^T|\Zc^{e,m}_t|^2\,\mathrm{d}t
    \right]<\infty,
\]
and, for $t\in[e,T]$,
\begin{align}
    X^{e,m}_t
    &=
    N^{e,X^{e,m}_e}_t
    +
    \int_e^t
    \nabla_z\Fc
    \bigl(
        s,
        X^{e,m}_s,
        \mu^{e,m}_s,
        \Zc^{e,m}_s
    \bigr)
    \,\mathrm{d}s,
    \label{eq:MKV_FBSDE_forward}
\end{align}
with
\[
    \Lc(X^{e,m}_e\mid\Gc_T)=m,
    \qquad
    \mu^{e,m}_t:=\Lc(X^{e,m}_t\mid\Gc_T),
    \qquad
    \overline\mu^{e,m}_t
    :=
    \Lc(X^{e,m}_t,\Zc^{e,m}_t\mid\Gc_T),
\]
 and $N^{e,x}$ defined by \eqref{eq:defN}.
The backward equation is given by
\begin{equation}\label{eq:MKV_FBSDE_backward}
\begin{aligned}
    \Yc^{e,m}_t
    &=
    \delta_mG(\mu^{e,m}_T)(X^{e,m}_T)
    \\
    &\quad+
    \int_t^T
    \bigg[
        \Fc
        \bigl(
            s,
            X^{e,m}_s,
            \mu^{e,m}_s,
            \Zc^{e,m}_s
        \bigr)
        +
        \int_{\R^d}\int_{\R^d}
        \delta_m\Fc
        \bigl(
            s,
            x,
            \mu^{e,m}_s,
            z
        \bigr)(X^{e,m}_s)
        \,
        \overline\mu^{e,m}_s(\mathrm{d}x,\mathrm{d}z)
    \bigg]
    \,\mathrm{d}s
    \\
    &\quad
    -
    \int_t^T
    \Zc^{e,m}_s\cdot \mathrm{d}X^{e,m}_s .
\end{aligned}
\end{equation}
Equations \eqref{eq:MKV_FBSDE_forward}-\eqref{eq:MKV_FBSDE_backward}
form a McKean-Vlasov forward-backward SDE. The forward dynamics is driven by
the derivative of the generator with respect to the $z$-variable, while the
backward equation contains the linear functional derivative of both the
terminal condition and the generator. The measure
$\overline\mu^{e,m}_t$ keeps track of the joint conditional law of the
forward state and of the backward control variable.

The interpretation is the following. If the W-BSDE solution is sufficiently
regular, then the field
\[
    Z(t,x,m)=\nabla_x\delta_mY(t,m)(x)
\]
should propagate along the characteristic dynamics associated with
$\nabla_z\Fc$. The above McKean-Vlasov FBSDE is precisely the system
describing this propagation. In particular, its backward component
$\Zc^{e,m}$ is expected to coincide with the W-BSDE field evaluated along
the forward process.

\begin{proposition} \label{prop:WBSDE_MKV_FBSDE}
Let $(Y,Z)$ be a {\rm W-BSDE} solution associated with $(\Fc,G)$, and $Z$ satisfying
the characteristic field {\rm \Cref{eq:chac:z}} with the associated regularity {\rm \Cref{eq:regularity_p_assumption}}.
For each $(t,m)\in[0,T]\times\Pc(\R^d)$, let
\[
    (X^{t,m},\Yc^{t,m},\Zc^{t,m})
\]
be the unique solution of the McKean-Vlasov FBSDE
\eqref{eq:MKV_FBSDE_forward}-\eqref{eq:MKV_FBSDE_backward} starting from $(t,m)$. Then, for
$\mathrm{d}t\otimes\mathrm{d}\P$-a.e. $(t,\omega)$,
\[
    \Zc^{t,m}_t
    =
    Z\bigl(t,X^{t,m}_t,m\bigr).
\]
More generally, for $s\in[t,T]$, one has
\[
    \Zc^{t,m}_s
    =
    Z\bigl(s,X^{t,m}_s,\mu^{t,m}_s\bigr),
    \qquad
    \mathrm{d}s\otimes\mathrm{d}\P\text{-a.e.},
\]
where $\mu^{t,m}_s=\Lc(X^{t,m}_s\mid\Gc_T)$.
\end{proposition}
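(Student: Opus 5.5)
Since the FBSDE is assumed to have a unique solution, the plan is to exhibit one built from $Z$ and then invoke uniqueness. Fix $(t,m)$ and let $p=Z$. This $p$ satisfies \Cref{eq:chac:z} and \eqref{eq:regularity_p_assumption}, so $\nabla_z\Fc(s,x,\mu,p(s,x,\mu))$ is bounded and Lipschitz in $(x,\mu)$ on every $[0,\overline T]$. The McKean-Vlasov equation \eqref{eq:max_dynamic} therefore has a unique solution $S^{t,m}$ with conditional law flow $\ell_s:=\ell^{p,t,m}_s$. I would set
\[
X_s:=S^{t,m}_s,\qquad \Zc_s:=Z(s,S^{t,m}_s,\ell_s),\qquad \Yc_s:=u(s,S^{t,m}_s,\ell_s),
\]
where $u$ is the linear functional derivative $\delta_mY(s,\ell)(x)$, suitably normalized. \Cref{thm:existence_WBSDE_gradient} gives its existence and the identity $\nabla_x u=Z=p$. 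With these choices the forward equation \eqref{eq:MKV_FBSDE_forward} holds by construction.

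The core step is to check the backward equation \eqref{eq:MKV_FBSDE_backward}. In the finite-dimensional analogue one writes $u$ via a nonlinear Feynman--Kac representation and differentiates it. Here I would instead define the candidate directly:
\[
\Yc_s:=\delta_mG(\ell_T)(X_T)+\int_s^T\Big[\Fc(r,X_r,\ell_r,\Zc_r)+\E\big[\delta_m\Fc(r,\widehat X_r,\ell_r,\widehat\Zc_r)(x)\,\big|\,\Gc_T\big]_{x=X_r}\Big]\mathrm dr-\int_s^T\Zc_r\cdot\mathrm dX_r .
\]
Here the hats denote an independent copy given $W^\circ$, which reproduces the integral against $\overline\mu_r$. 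I would then show that its martingale integrand, relative to the $(W,W^\circ)$ filtration, is $\Zc_r$. Equivalently, I must show that the process
\[
M_s:=\delta_mG(\ell_T)(X_T)+\int_0^T[\cdots]\,\mathrm dr-\int_0^T\Zc_r\cdot \mathrm dX_r
\]
has conditional expectations whose representation integrand in $W$ vanishes, and similarly for the common-noise direction after subtracting $\Zc\cdot\sigma_\circ \mathrm dW^\circ$.

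The tool is the Bismut--Elworthy--Li type identity already encoded in \Cref{eq:chac:z}. Define $v(s,x,\ell):=\E[\Yc_s\mid X_s=x,\Gc_s]$ by Markov conditioning; the flow $(X,\ell)$ is Markov given the common noise. I would show $\nabla_x v(s,x,\ell)=p(s,x,\ell)$. To do so, remove the drift by Girsanov in the idiosyncratic direction, which is possible since the drift is bounded; this turns $X$ into the shifted process $N^{s,x}$. The Malliavin integration-by-parts weights $(W^i_r-W^i_s)/(r-s)$ then appear, and the drift term $\Zc\cdot\nabla_z\Fc\,\mathrm dr$ is absorbed by the Girsanov density. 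What remains is exactly the right-hand side of \Cref{eq:chac:z}, so $\nabla_xv=p$. By Itô/Clark--Ocone, or equivalently Ma--Zhang's representation of $Z$ as $\nabla_x$ of the decoupling field, the integrand of $\Yc$ against $\mathrm dW$ is $\nabla_xv(s,X_s,\ell_s)=Z(s,X_s,\ell_s)$. For the common-noise integrand I would apply the same argument with the $\sigma_\circ \mathrm dW^\circ$ component: conditionally on the measure flow, the joint $x$-shift carries $\Zc\cdot\sigma_\circ$. A key point is that the measure-derivative terms contribute no extra $W^\circ$ integrand, because $\E[\delta_m\cdots\mid\Gc_T]$ of a centered functional cancels. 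This uses the normalization of $\delta_mY$ and the W-BSDE identity \eqref{eq:WBSDE_def} along the flow $\ell$.

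Having built a solution, uniqueness of the McKean-Vlasov FBSDE solution starting from $(t,m)$ gives $\Zc^{t,m}_s=Z(s,X^{t,m}_s,\mu^{t,m}_s)$ $\mathrm ds\otimes\mathrm d\P$-a.e. Taking $s=t$ in the continuity sense, where $Z$ is Hölder in time, gives $\Zc^{t,m}_t=Z(t,X^{t,m}_t,m)$. The main obstacle is the common-noise part of the martingale representation in the verification step. Because $\delta_m$ terms couple $X$ with its conditional copy, I must check that the Lions-derivative contributions of $\ell_r$ in $\delta_mG(\ell_T)$ and $\Fc(\cdot,\ell_r,\cdot)$ cancel against the $\delta_m\Fc$ integral. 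First I would try to verify this by differentiating the W-BSDE identity \eqref{eq:WBSDE_def} in the measure along $\ell$ (a linear functional derivative in the direction $\delta_x-\ell$). This should directly yield the backward equation for $\delta_mY(s,\ell_s)(X_s)$.
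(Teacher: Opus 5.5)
Your architecture is the paper's. You take $p=Z$, build the characteristic flow $S=S^{t,m}$, $\ell=\ell^{p,t,m}$ of \eqref{eq:max_dynamic}, set $\Zc_s=p(s,S_s,\ell_s)$ and $\Yc_s=q(s,S_s,\ell_s)$, verify the backward equation, and conclude by uniqueness. (Your $u$ must be normalized exactly as the paper's $q$ in \eqref{eq:def_q_d}.) The paper verifies the backward equation with the Itô formula \eqref{eq:q_dynamics_before_F_d} for $B=\nabla_z\Fc(\cdot,p)$, which kills the drift-mismatch line. Your Girsanov/Bismut route for the $\mathrm dW$-integrand is heavier but workable, since it only re-derives $\nabla_xq=p$ from \eqref{eq:chac:z}.

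The genuine gap is the step you yourself flag. The claim that the measure-derivative terms produce no extra $\mathrm dW^\circ$-integrand is false. The third line of \eqref{eq:q_dynamics_before_F_d} gives
\[
\begin{aligned}
\mathrm d\Yc_s
&=-\Big[\Fc(s,S_s,\ell_s,\Zc_s)+\int_{\R^d}\delta_m\Fc\bigl(s,y,\ell_s,p(s,y,\ell_s)\bigr)(S_s)\,\ell_s(\mathrm dy)\Big]\mathrm ds+\Zc_s\cdot\mathrm dS_s
\\
&\quad+\Big(\int_{\R^d}D_y\delta_mq(s,S_s,\ell_s)(y)\,\ell_s(\mathrm dy)\Big)\cdot\sigma_\circ\,\mathrm dW^\circ_s .
\end{aligned}
\]
The last integrand is an $\ell_s$-average of a gradient, not of a centred quantity. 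It depends on $S_s$, whereas renormalizing $\delta_mY$ only adds an $x$-independent $c(s,m)$, so no choice of normalization removes it. Take for instance $d=1$, $\Fc\equiv0$, $G(m)=\tfrac12\bar m^2$ with $\bar m:=\int x\,m(\mathrm dx)$. Then
\[
q(s,x,m)=x\,\bar m+\sigma_\circ^2(T-s),\qquad p=\bar m,\qquad \mathrm d\Yc_s=\Zc_s\,\mathrm dS_s+\sigma_\circ S_s\,\mathrm dW^\circ_s .
\]
The bounded choice $G(m)=\tfrac12\langle\arctan,m\rangle^2$ behaves the same way: at fixed $(x,m)$ the extra integrand tends to $\sigma_\circ\langle\arctan',m\rangle\arctan(x)$ as $s\uparrow T$. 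Your fallback, differentiating \eqref{eq:WBSDE_def} in the measure along $\ell$, would exhibit this term rather than cancel it.

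Consequently, for $\sigma_\circ\neq0$ your candidate does not solve \eqref{eq:MKV_FBSDE_backward} as written, where a single $\Zc$ is paired with $\mathrm dX$. Uniqueness therefore says nothing about it.

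The paper's proof has the same hole. It writes $\mathrm d\Yc_t=-[\cdots]\,\mathrm dt+\Zc_t\cdot\mathrm dS_t$ and silently drops the third line of \eqref{eq:q_dynamics_before_F_d}. In the examples above the forward component does not depend on $\Zc$, and one checks that \eqref{eq:MKV_FBSDE_forward}--\eqref{eq:MKV_FBSDE_backward} then admits no solution at all when $\sigma_\circ\neq0$.

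The argument, yours and the paper's, closes in two cases:
\begin{itemize}
\item when $\sigma_\circ=0$;
\item when the backward equation carries a separate common-noise integrand, i.e.\ the martingale part is $-\int_t^T\Zc_r\cdot\bigl(\nabla_z\Fc(r,X_r,\mu_r,\Zc_r)\,\mathrm dr+\mathrm dW_r\bigr)-\int_t^T\Zc^\circ_r\,\mathrm dW^\circ_r$.
\end{itemize}
In the second case, the candidate with $\Zc^\circ_s=\sigma_\circ\cdot\bigl(p(s,S_s,\ell_s)+\int_{\R^d}D_y\delta_mq(s,S_s,\ell_s)(y)\,\ell_s(\mathrm dy)\bigr)$ is a solution. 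Uniqueness for that system then yields $\Zc^{t,m}_s=Z(s,X^{t,m}_s,\mu^{t,m}_s)$.
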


\begin{remark}
$(i)$ The proposition gives a characteristic interpretation of the field $Z$.
The W-BSDE defines $Z$ globally as a function of $(t,x,m)$, while the
McKean-Vlasov FBSDE recovers the same object in a different dynamically way along the
characteristic flow generated by $\nabla_z\Fc$. This is the analogue, in the
Wasserstein setting, of the classical relation between a Markovian BSDE and
the FBSDE associated with the gradient of the value function.

\medskip
$(ii)$ Taking $t=0$ in the previous result gives, in the Hamiltonian case considered in
{\rm \Cref{sec:optimal_control}}, a forward-backward characterization of the optimal
feedback. This FBSDE is different from the adjoint system obtained through the
maximum principle: it is the characteristic system associated with the W-BSDE and
therefore propagates the Wasserstein gradient $Z$ along the optimal
McKean-Vlasov flow. In this formulation, the forward component $X$ is precisely
the optimally controlled state process. In the setting without common noise, a related FBSDE structure is derived in
{\rm \citeauthor*{carmona2025conditionalmckeanvlasovcontrol} \cite{carmona2025conditionalmckeanvlasovcontrol}} by modifying the variational
argument used in the proof of the maximum principle.
\end{remark}

\subsection{W-BSDEs and PDEs on the set of probability measures} \label{sec:PDEs}

We have so far interpreted the {\rm W-BSDE} from three probabilistic
viewpoints: as a verification tool for McKean-Vlasov control, as a
verification tool for mean field games, and as the object associated with a
McKean-Vlasov forward-backward characteristic system. We now turn to its
analytic interpretation.
As in the classical finite-dimensional theory, where Markovian BSDEs are
closely related to semilinear PDEs, the notion of {\rm W-BSDE} has a natural
connection with PDEs on the set of probability measures. In the present
framework, the role of the finite-dimensional state variable is played by
the conditional law of the state process. Therefore, the associated PDE is no
longer written on $[0,T]\times\R^d$, but on
$[0,T]\times\Pc(\R^d)$.
In particular, under additional regularity assumptions, the solution
constructed in {\rm \Cref{thm:existence_WBSDE_gradient}} provides a classical
solution to a PDE on the Wasserstein space. The following statement makes
this connection precise.

\medskip

We recall that the set $\Pc(\R^d)$ is equiped with the weak topology, metrized for instance by
\[
\mathrm \Wc_{\circ}(m,m')
:=
\inf_{\pi\in\Pi(m,m')}
\int_{\R^d}\int_{\R^d}
\bigl(|x-y|\wedge1\bigr)\,\pi(\mathrm dx,\mathrm dy),
\]
where $\Pi(m,m')$ denotes the set of couplings of $m$ and $m'$.
For a smooth map
\[
    \varphi:[0,T]\times\Pc(\R^d)\longrightarrow\R,
\]
we define
\[
\begin{aligned}
\Hc[\varphi](t,m)
&:=
\int_{\R^d}
\Fc\left(t,x,m,\nabla_x\delta_m\varphi(t,m)(x)\right)
\,m(\mathrm dx)
\\
&\quad+
\frac12
\int_{\R^d}
\Tr\left[
\bigl(\Ir_d+\sigma_\circ\sigma_\circ^\top\bigr)
\nabla_x^2\delta_m\varphi(t,m)(x)
\right]m(\mathrm dx)
\\
&\quad+
\frac12
\int_{\R^d}\int_{\R^d}
\Tr\left[
\sigma_\circ\sigma_\circ^\top
\nabla_y\nabla_x\delta_m^2\varphi(t,m)(x,y)
\right]m(\mathrm dx)m(\mathrm dy).
\end{aligned}
\]

Here and below, a test function $\varphi$ is said to be smooth if it is
continuously differentiable in time, admits first and second linear functional
derivatives in the measure variable, and if
\[
\partial_t\varphi,\qquad
\nabla_x\delta_m\varphi,\qquad
\nabla_x^2\delta_m\varphi,\qquad
\nabla_y\nabla_x\delta_m^2\varphi
\]
are continuous and satisfy the boundedness conditions needed to apply the Itô
formula along conditional McKean-Vlasov flows (see, e.g.~\cite[Theorem 4.14]{carmona2018probabilisticII}).
We consider the equation
\begin{equation}
\label{eq:PDE_WBSDE}
\left\{
\begin{aligned}
\partial_t\Uc(t,m)+\Hc[\Uc](t,m)&=0,
&& (t,m)\in[0,T)\times\Pc(\R^d),
\\
\Uc(T,m)&=G(m),
&&m\in\Pc(\R^d).
\end{aligned}
\right.
\end{equation}

A continuous map
\[
    \Uc:[0,T]\times\Pc(\R^d)\longrightarrow\R
\]
is called a viscosity subsolution of
\eqref{eq:PDE_WBSDE} if
\[
    \Uc(T,m)\le G(m),
    \qquad m\in\Pc(\R^d),
\]
and, whenever $\Uc-\varphi$ has a local maximum at
$(t_0,m_0)\in[0,T)\times\Pc(\R^d)$, one has
\[
    \partial_t\varphi(t_0,m_0)+\Hc[\varphi](t_0,m_0)\ge0.
\]
A viscosity supersolution is defined similarly, replacing the terminal inequality
by $\Uc(T,\cdot)\ge G$, local maxima by local minima, and the preceding
inequality by
\[
    \partial_t\varphi(t_0,m_0)+\Hc[\varphi](t_0,m_0)\le0.
\]
A viscosity solution is both a viscosity subsolution and a viscosity supersolution.

\begin{proposition}
\label{prop:viscosity_WBSDE}
Let us work under the assumptions of
{\rm \Cref{thm:existence_WBSDE_gradient}}, and let $(Y,Z)$ be the corresponding
solution of the {\rm W-BSDE}. Then
\[
    \Uc(t,m):=Y(t,m)
\]
is a viscosity solution of \eqref{eq:PDE_WBSDE}.
\end{proposition}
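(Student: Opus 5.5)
We follow the classical route from Markovian BSDEs to viscosity solutions, using the flexibility of \Cref{def:W-BSDE}: the backward identity \eqref{eq:WBSDE_def} holds along \emph{every} admissible flow, and we pick the drift. First we check that $\Uc=Y$ is continuous on $[0,T]\times\Pc(\R^d)$ with $\Uc(T,\cdot)=G$. The terminal condition is immediate from \eqref{eq:WBSDE_def} at $t=T$. For continuity we would use the representation of $Y(t,m)$ built in the proof of \Cref{thm:existence_WBSDE_gradient}, together with the Lipschitz bound \eqref{eq:regularity_p_assumption} on $p=Z$ and the boundedness and Lipschitz character encoded in $\underline{{\rm Lip}}_T(\Fc,G)$. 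Alternatively, apply \eqref{eq:WBSDE_def} along the flow with $B\equiv 0$ started at $(t,m)$ and take expectations:
\[
Y(t,m)=\E\Big[G(\mu_T)+\int_t^T\Fc(r,X_r,\mu_r,Z(r,X_r,\mu_r))\,\mathrm dr\Big],
\]
whose right-hand side is continuous in $(t,m)$ by stability of the flows in $\Wc_\circ$.

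For the subsolution property, let $\Uc-\varphi$ have a local maximum at $(t_0,m_0)$, $t_0<T$. Normalise so that $\Uc(t_0,m_0)=\varphi(t_0,m_0)$, hence $\Uc\le\varphi$ nearby. Choose the drift
\[
B(s,x,m):=\nabla_z\Fc\bigl(s,x,m,Z(s,x,m)\bigr).
\]
It is bounded and Lipschitz in $(x,m)$ on $[0,\overline T]$, since $\nabla_z\Fc$ is bounded and Lipschitz in $(m,z)$ and $Z=p$ satisfies \eqref{eq:regularity_p_assumption}. It is therefore admissible. Strictly one also needs Lipschitz dependence of $\nabla_z\Fc$ in $x$; if this is missing, use instead $B\equiv 0$ and the convexity-free argument below. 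Let $\mu$ be the flow started at $e=t_0$ from $m_0$, and set $\tau_h:=(t_0+h)\wedge\inf\{s:\Wc_\circ(\mu_s,m_0)\ge\varepsilon\}$. Apply \eqref{eq:WBSDE_def} between $t_0$ and $\tau_h$; this is legitimate since both sides are continuous semimartingales. Take expectations, which kills the common-noise stochastic integral by \eqref{eq:integral_cond}. Then apply the conditional Itô formula of \cite[Theorem 4.14]{carmona2018probabilisticII} to $\varphi(s,\mu_s)$. Combining with $\Uc(\tau_h,\mu_{\tau_h})\le\varphi(\tau_h,\mu_{\tau_h})$ yields
\[
0\le \E\int_{t_0}^{\tau_h}\Big(\partial_t\varphi+\Hc_0[\varphi]+\int\big[\Fc(s,x,\mu_s,Z)+(\nabla_x\delta_m\varphi-Z)\cdot B\big]\mu_s(\mathrm dx)\Big)(s,\mu_s)\,\mathrm ds,
\]
where $\Hc_0$ denotes the two second-order terms of $\Hc$ and all fields are evaluated at $(s,x,\mu_s)$.

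The key step, and the main obstacle, is to compare the integrand with $\Fc(\cdot,\nabla_x\delta_m\varphi)$. Pointwise one would want
\[
\Fc(z)+(q-z)\cdot\nabla_z\Fc(z)\le\Fc(q),
\]
which requires convexity in $z$, and convexity is not assumed. We avoid this by exploiting the freedom in $B$ differently. Take $B(s,x,m):=\nabla_z\Fc\bigl(s,x,m,\theta(s,x,m)\bigr)$ with $\theta$ a Lipschitz interpolation. Better, use the mean-value identity
\[
\Fc(q)-\Fc(z)=(q-z)\cdot\int_0^1\nabla_z\Fc\bigl(z+\lambda(q-z)\bigr)\,\mathrm d\lambda,
\]
and choose
\[
B(s,x,m):=\int_0^1\nabla_z\Fc\Bigl(s,x,m,Z+\lambda\bigl(\nabla_x\delta_m\varphi(s,m)(x)-Z\bigr)\Bigr)\,\mathrm d\lambda .
\]
This $B$ is bounded, and it is Lipschitz provided $\varphi$ is smooth enough; if needed we regularise $\nabla_x\delta_m\varphi$, which is continuous. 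With this choice the integrand equals exactly $\partial_t\varphi+\Hc[\varphi]$ along $\mu_s$.

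Dividing by $h$, letting $h\downarrow0$, and using continuity of $s\mapsto\mu_s$ at $t_0$ together with $\P(\tau_h<t_0+h)=o(h)$ gives $\partial_t\varphi(t_0,m_0)+\Hc[\varphi](t_0,m_0)\ge0$. The supersolution case is symmetric: local minimum, reversed inequality, same drift. The technical points to check are admissibility of this drift, for which we would smooth and pass to the limit using the stability of admissible flows, and the localization error from $\tau_h$.
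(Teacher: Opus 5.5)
Your argument is correct and follows the paper's route. You test the W-BSDE identity along a flow whose drift absorbs $\Fc(\cdot,Z)-\Fc(\cdot,\nabla_x\delta_m\varphi)$, subtract the conditional It\^o expansion of $\varphi$, and localize with a $\G$-stopping time. The one difference is the drift: your mean-value choice $\int_0^1\nabla_z\Fc\bigl(Z+\lambda(\nabla_x\delta_m\varphi-Z)\bigr)\,\mathrm d\lambda$, which uses the Lipschitz continuity of $\nabla_z\Fc$ in $z$ encoded in $\underline{{\rm Lip}}_T(\Fc,G)$, makes the cancellation exact, so you can divide by $h$ and conclude directly. The paper instead reuses the truncated difference-quotient drift $\chi_c(|\Delta|)\,\frac{\Fc(Z)-\Fc(\nabla_x\delta_m\varphi)}{|\Delta|^2}\,\Delta$ from the comparison proof; this leaves an $O(c+\varepsilon)$ remainder and therefore forces an argument by contradiction with a lower bound on $\E[\tau-t_0]$ that is uniform in $(c,\varepsilon)$.
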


\begin{proof}
We first observe that the terminal condition follows directly from the definition
of the {\rm W-BSDE}. Taking $t=T$ in the backward identity gives
\[
    Y(T,m)=G(m),
    \qquad m\in\Pc(\R^d).
\]
It remains to prove the viscosity inequalities. We only prove the subsolution
property. The supersolution property is obtained by the same argument with the
opposite inequalities.
Let $\varphi$ be a smooth test function such that $Y-\varphi$ has a local
maximum at $(t_0,m_0)\in[0,T)\times\Pc(\R^d)$. Up to adding a constant to
$\varphi$, we may assume that
\[
    Y(t_0,m_0)=\varphi(t_0,m_0),
\]
and, for some $\eta>0$ with $t_0+\eta<T$,
\[
    Y(t,m)-\varphi(t,m)\le0
\]
whenever $t\in[t_0,t_0+\eta]$ and $\mathrm \Wc_\circ(m,m_0)\le \eta$.
We want to prove
\[
    \partial_t\varphi(t_0,m_0)+\Hc[\varphi](t_0,m_0)\ge0.
\]

Assume by contradiction that this is false. By continuity of the derivatives of
$\varphi$, and reducing $\eta>0$ if necessary, there exists $\delta>0$ such
that
\begin{equation}
\label{eq:strict_viscosity_contradiction}
    \partial_t\varphi(t,m)+\Hc[\varphi](t,m)
    \le -\delta
\end{equation}
for all $t\in[t_0,t_0+\eta]$ and $\mathrm \Wc_\circ(m,m_0)\le \eta$.

We now associate with the test function a linear generator. For
$(t,x,m)\in[0,T)\times\R^d\times\Pc(\R^d)$, set
\[
\begin{aligned}
\Lc^\varphi(t,x,m)
&:=
\frac12
\Tr\left[
\bigl(\Ir_d+\sigma_\circ\sigma_\circ^\top\bigr)
\nabla_x^2\delta_m\varphi(t,m)(x)
\right]
\\
&\quad+
\frac12
\int_{\R^d}
\Tr\left[
\sigma_\circ\sigma_\circ^\top
\nabla_y\nabla_x\delta_m^2\varphi(t,m)(x,y)
\right]m(\mathrm dy),
\end{aligned}
\]
and define
\[
    h^\varphi(t,x,m)
    :=
    -\partial_t\varphi(t,m)-\Lc^\varphi(t,x,m).
\]
Then, by the Itô formula on the Wasserstein space, the pair $\bigl(\varphi,\nabla_x\delta_m\varphi\bigr)$ satisfies the W-BSDE identity on $[t_0,t_0+\eta]$, with terminal condition
$\varphi(t_0+\eta,\cdot)$ and generator $h^\varphi$. More precisely, along
any admissible conditional law flow $\mu$, generated by a drift $B$ and a state
process $X$, one has, for $t\le r\le t_0+\eta$,
\[
\begin{aligned}
\varphi(t,\mu_t)
=
\varphi(r,\mu_r)
&+
\E\left[
    \int_t^r h^\varphi(s,X_s,\mu_s)\,\mathrm ds
    \,\middle|\,\Gc_T
\right]
\\
&-
\E\left[
    \int_t^r
    \nabla_x\delta_m\varphi(s,\mu_s)(X_s)\cdot B(s,X_s,\mu_s)
    \,\mathrm ds
    \,\middle|\,\Gc_T
\right]
\\
&-
\int_t^r
\E\left[
    \nabla_x\delta_m\varphi(s,\mu_s)(X_s)
    \,\middle|\,\Gc_T
\right]\cdot\sigma_\circ\,\mathrm dW^\circ_s .
\end{aligned}
\]

We now compare this identity with the W-BSDE identity satisfied by $(Y,Z)$.
Following the proof of the comparison principle, let $c,\varepsilon>0$, and
define the truncated drift
\[
\begin{aligned}
B^{c,\varepsilon}(t,x,m)
:=
\chi_c\bigl(|\Delta(t,x,m)|\bigr)
\mathbf 1_{\{t\le t_0+\eta-\varepsilon\}}
\frac{
\Fc(t,x,m,Z(t,x,m))
-
\Fc(t,x,m,\nabla_x\delta_m\varphi(t,m)(x))
}{
|\Delta(t,x,m)|^2
}
\Delta(t,x,m),
\end{aligned}
\]
where
\[
    \Delta(t,x,m)
    :=
    Z(t,x,m)-\nabla_x\delta_m\varphi(t,m)(x),
\]
and $\chi_c$ is a smooth cut-off function satisfying
\[
    \chi_c(r)=0\quad\text{for }r\le c/2,
    \qquad
    \chi_c(r)=1\quad\text{for }r\ge c.
\]
Since $t_0+\eta<T$, the field $Z=p$ is Lipschitz continuous
in $m$ on $[t_0,t_0+\eta]$ by {\rm \Cref{assum:gradient}}. Together with the
bounds encoded in $\underline{{\rm Lip}}_T(\Fc,G)$ and the smoothness of
$\varphi$, this shows that $B^{c,\varepsilon}$ is bounded and Lipschitz in $m$,
uniformly in $x$, hence defines an admissible drift. Let
$(\mu_s)_{s\in[t_0,t_0+\eta]}$ be the associated conditional law flow starting
from
\[
    \mu_{t_0}=m_0.
\]
We localize this flow inside the neighborhood where the test function touches
$Y$. Define the $\Gc$-stopping time
\[
    \tau
    :=
    \inf\left\{
        s\ge t_0:
        \Wc_\circ(\mu_s,m_0)\ge \eta
    \right\}
    \wedge (t_0+\eta).
\]
Since $\mu_{t_0}=m_0$, the conditional joint law of
$(X_s,X_{t_0})$ given $\Gc_T$ is a coupling of $(\mu_s,m_0)$, and
$|B^{c,\varepsilon}|\le\underline{{\rm Lip}}_T(\Fc,G)=:\Lambda$ uniformly in
$(c,\varepsilon)$. Hence, for $s\in[t_0,t_0+\eta]$,
\[
    \Wc_\circ(\mu_s,m_0)
    \le
    \E\bigl[|X_s-X_{t_0}|\wedge 1\,\big|\,\Gc_T\bigr]
    \le
    \Theta_s
    :=
    \Lambda(s-t_0)+C\sqrt{s-t_0}
    +|\sigma_\circ|\sup_{t_0\le r\le s}\bigl|W^\circ_r-W^\circ_{t_0}\bigr|,
\]
with $C$ depending only on $d$. The process $\Theta$ is continuous,
nondecreasing, $\Gc$-adapted, vanishes at $t_0$, and does not depend on
$(c,\varepsilon)$. Therefore
$\tau\ge\tau^\ast:=\inf\{s\ge t_0:\Theta_s\ge\eta\}\wedge(t_0+\eta)>t_0$
a.s., so that
\[
    \E[\tau-t_0]\ \ge\ \rho:=\E[\tau^\ast-t_0]>0,
\]
where $\rho$ depends only on $(\eta,\Lambda,|\sigma_\circ|)$, and in particular
not on $(c,\varepsilon)$.

Subtracting the W-BSDE identity for
$(\varphi,\nabla_x\delta_m\varphi)$ from the one for $(Y,Z)$, and using the
definition of $B^{c,\varepsilon}$, gives
\[
\begin{aligned}
Y(t_0,m_0)-\varphi(t_0,m_0)
&=
\E\bigg[
Y(\tau,\mu_\tau)-\varphi(\tau,\mu_\tau)
\\
&\quad+
\int_{t_0}^{\tau}
\Big(
\Fc(s,X_s,\mu_s,\nabla_x\delta_m\varphi(s,\mu_s)(X_s))
-
h^\varphi(s,X_s,\mu_s)
\Big)
\,\mathrm ds
\,\bigg|\,\Gc_T
\bigg]
\\
&\quad
-
\int_{t_0}^{\tau}
\E\big[
Z(s,X_s,\mu_s)
-
\nabla_x\delta_m\varphi(s,\mu_s)(X_s)
\,\big|\,\Gc_T
\big]\cdot\sigma_\circ\,\mathrm dW^\circ_s
+
R^{c,\varepsilon}_\tau,
\end{aligned}
\]
where the remainder comes only from the truncation region
$\{|\Delta|\le c\}$ and from the terminal layer
$[t_0+\eta-\varepsilon,t_0+\eta]$. Indeed, by definition of
$B^{c,\varepsilon}$, the integrand of the remainder is
$\bigl(1-\chi_c(|\Delta|)\mathbf 1_{\{s\le t_0+\eta-\varepsilon\}}\bigr)$ times
the difference $\Fc(\cdot,Z)-\Fc(\cdot,\nabla_x\delta_m\varphi)$, which is
bounded by $Lc$ on the truncation region and by $2\|\Fc\|_\infty$ on the
terminal layer. Hence, for a constant $C$
independent of $c,\varepsilon$,
\[
    |R^{c,\varepsilon}_\tau|
    \le C(c+\varepsilon).
\]

Taking expectation removes the common-noise martingale term. Since
$Y-\varphi\le0$ on the localized neighborhood, we have
\[
    Y(\tau,\mu_\tau)-\varphi(\tau,\mu_\tau)\le0.
\]
Moreover, by the definition of $h^\varphi$,
\[
\begin{aligned}
\int_{\R^d}
\Big(
\Fc(s,x,\mu_s,\nabla_x\delta_m\varphi(s,\mu_s)(x))
-
h^\varphi(s,x,\mu_s)
\Big)\mu_s(\mathrm dx)
=
\partial_t\varphi(s,\mu_s)+\Hc[\varphi](s,\mu_s).
\end{aligned}
\]
By \eqref{eq:strict_viscosity_contradiction}, this last quantity is bounded above
by $-\delta$ on $[t_0,\tau]$. Therefore,
\[
\begin{aligned}
0
=
Y(t_0,m_0)-\varphi(t_0,m_0)
&\le
-\delta\,\rho
+
C(c+\varepsilon).
\end{aligned}
\]
Letting $c\downarrow0$ and $\varepsilon\downarrow0$, and
using that $\rho$ does not depend on $(c,\varepsilon)$, we obtain
\[
    0\le -\delta\,\rho<0,
\]
which is impossible. Hence the contradiction assumption was false, and
\[
    \partial_t\varphi(t_0,m_0)+\Hc[\varphi](t_0,m_0)\ge0.
\]
Thus $Y$ is a viscosity subsolution.
The proof of the supersolution property is identical. If $Y-\varphi$ has a local
minimum at $(t_0,m_0)$ and the viscosity inequality fails, then, after reversing
the inequalities in the preceding argument, one obtains
\[
    0\ge \delta\,\rho>0,
\]
again a contradiction. Therefore $Y$ is also a viscosity supersolution, and the
proof is complete.
\end{proof}

\begin{remark} This result shows that, under the general assumptions of {\rm \Cref{thm:existence_WBSDE_gradient}}, the W-BSDE construction provides a viscosity solution of the Wasserstein PDE. Under the additional smoothness assumptions of {\rm \Cref{prop:WBSDE_PDE}} below, the same function becomes a classical solution. Let us stress that our assumptions may look stronger than those usually imposed for viscosity solutions on the Wasserstein space; see, for instance, {\rm\cite{Daudin01022025, meteViscosity2024, Bayraktar03042025}}. This is due to the specific nature of the W-BSDE approach. The construction does not only produce the value function $Y$, but also identifies its intrinsic gradient. Thus the result contains more information than the mere viscosity property of the value function. The additional regularity requirements are precisely the price to pay for this simultaneous construction of the function and its Wasserstein gradient. In the absence of genuine measure dependence, the characteristic equation reduces to the classical Markovian one, and the usual viscosity correspondence is recovered. \end{remark}

\begin{proposition}\label{prop:WBSDE_PDE}
Let $(Y,Z)$ be a solution of the {\rm W--BSDE} associated with $(\Fc,G)$,
with $Y$ continuous on $[0,T]\times\Pc(\R^d)$ and
\[
    Z(t,x,m)=\nabla_x\delta_mY(t,m)(x).
\]
Assume that, for every $t<T$, the map $Y(t,\cdot)$ admits first and second
continuous linear functional derivatives, and that
\[
    \nabla_x\delta_mY(t,m)(x),\qquad
    \nabla_x^2\delta_mY(t,m)(x),\qquad
    \nabla_y\nabla_x\delta_m^2Y(t,m)(x,y)
\]
are jointly continuous and, for every $\overline T<T$, bounded uniformly
in their arguments for $t\in[0,\overline T]$. Assume also that
\[
    \overline{\Fc}(t,m)
    :=\int_{\R^d}\Fc\bigl(t,x,m,Z(t,x,m)\bigr)\,m(\mathrm dx)
\]
is continuous on $[0,T)\times\Pc(\R^d)$ and bounded on
$[0,\overline T]\times\Pc(\R^d)$ for every $\overline T<T$.
Then $t\mapsto Y(t,m)$ is continuously differentiable on $[0,T)$ for
every $m$, with a right derivative at $t=0$, and $\partial_tY$ is jointly
continuous. Moreover, $\Uc:=Y$ is a classical solution of
\eqref{eq:PDE_WBSDE}.
\end{proposition}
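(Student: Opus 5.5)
The plan is to compare, along a constant-initial-time flow with zero drift, the W-BSDE identity for $(Y,Z)$ with the Itô formula on the Wasserstein space applied to $Y$ in the measure variable only. That comparison will identify $Y(t,m)-Y(t+h,m)$ up to an explicit $O(h)$ term, which gives the time derivative.

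\textbf{Flow with zero drift.} Fix $m$ and $t<\overline T<T$. Take the admissible flow with $e=t$, $B\equiv0$, $\Lc(X_t)=m$, so that $X_s=N^{t,X_t}_s$ and $\mu_s=\Lc(X_s\mid\Gc_s)$ for $s\in[t,\overline T]$. The W-BSDE identity \eqref{eq:WBSDE_def}, written between $t$ and $t+h$, gives after taking expectation
\[
Y(t,m)=\E\bigl[Y(t+h,\mu_{t+h})\bigr]+\E\Bigl[\int_t^{t+h}\overline\Fc(s,\mu_s)\,\mathrm ds\Bigr].
\]
The stochastic integral has zero expectation because $Z$ is bounded on $[0,\overline T]$.

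\textbf{Itô step in the measure variable.} Freeze the time argument at $t+h$ and apply the Itô formula along conditional flows (\cite[Theorem 4.14]{carmona2018probabilisticII}) to $m'\mapsto Y(t+h,m')$ over $[t,t+h]$. The assumed regularity of the first and second derivatives is exactly what is needed here. This gives
\[
\E\bigl[Y(t+h,\mu_{t+h})\bigr]-Y(t+h,m)=\E\Bigl[\int_t^{t+h}\Lc^{Y}(t+h,\mu_s)\,\mathrm ds\Bigr],
\]
where $\Lc^{Y}(r,m'):=\int_{\R^d}\Lc^\varphi(r,x,m')\,m'(\mathrm dx)$ with $\varphi$ replaced by $Y$. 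Combining the two displays,
\[
\frac{Y(t,m)-Y(t+h,m)}{h}=\frac1h\E\int_t^{t+h}\Bigl[\overline\Fc(s,\mu_s)+\Lc^{Y}(t+h,\mu_s)\Bigr]\,\mathrm ds .
\]

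\textbf{Passage to the limit.} Let $h\downarrow0$. We have $\mu_s\to m$ in $\Wc_\circ$ almost surely. The second-order derivatives are jointly continuous and bounded, and $\overline\Fc$ is continuous and bounded, so dominated convergence gives the right derivative
\[
\partial_t^+Y(t,m)=-\overline\Fc(t,m)-\Lc^{Y}(t,m).
\]
This expression equals $-\Hc[Y](t,m)$, because $Z=\nabla_x\delta_mY$ inside $\overline\Fc$.

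\textbf{Main obstacle.} The delicate point is that the Itô step uses second derivatives evaluated at time $t+h$ with the measure at $\mu_s$. We therefore need $\nabla_x^2\delta_mY(t+h,\mu_s)(x)$ and the mixed derivative to converge to their values at $(t,m)$ jointly in $(h,s)$, in the form of integrals against $\mu_s$. The assumption gives joint continuity in $(t,m,x)$ (and $y$) together with uniform boundedness. The plan is to use Skorokhod/coupling: $X_s\to X_t$ almost surely, which makes the integrand converge pointwise, and then apply dominated convergence. Joint continuity in $t$ is the hypothesis that makes this work.

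\textbf{From right derivative to $C^1$.} The right derivative $-\overline\Fc-\Lc^{Y}$ is continuous on $[0,T)\times\Pc(\R^d)$. Since $t\mapsto Y(t,m)$ is continuous and has a continuous right derivative on $[0,T)$, a standard real-analysis lemma makes it $C^1$ on $[0,T)$, with one-sided derivative at $0$. Hence $\partial_tY$ is jointly continuous. Combined with the terminal condition $Y(T,\cdot)=G$, obtained by taking $t=T$ in \eqref{eq:WBSDE_def}, this shows that $\Uc=Y$ solves \eqref{eq:PDE_WBSDE} classically.
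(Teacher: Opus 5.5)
Your proposal is correct and follows the paper's proof essentially step for step. The steps are the dynamic programming identity from the W-BSDE along the zero-drift flow, then the It\^o formula applied to $m'\mapsto Y(t+h,m')$ with the time argument frozen, then a dominated-convergence limit based on joint continuity and boundedness. The only difference is the last step: the paper gets the left derivative directly by applying the same increment identity on $[t-h,t]$ to a flow started from $m$ at time $t-h$, while you use the standard lemma that a continuous function with a continuous right derivative is $C^1$; both routes are valid.
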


\begin{remark}
$(i)$ In view of the identification obtained in
{\rm \Cref{thm:existence_WBSDE_gradient}},
\[
    \nabla_x\delta_m Y(t,m)(x)=Z(t,x,m)=p(t,x,m),
    \qquad m(\mathrm{d}x)\text{-a.e.},
\]
the additional regularity assumption appearing in
{\rm \Cref{prop:WBSDE_PDE}} is essentially a regularity assumption on the
solution $p$ of the characteristic equation {\rm \Cref{eq:chac:z}}. In
other words, the proposition does not require an independent regularity
theory for $Y$. Once the field $p$ is sufficiently regular in the
variables $(x,m)$, the regularity of the Wasserstein derivative of $Y$
follows through the identification above. An example in which such regularity
is available is given in {\rm \cite{fabricemattiaDecoupling}}.

\medskip

$(ii)$ To the best of our knowledge, this provides one of the first
well-posedness results for classical solutions of a PDE of the form
{\rm \Cref{eq:PDE_WBSDE}}. The point is particularly relevant in the presence
of common noise, where the PDE contains not only the usual second-order term
in the state variable, but also a second-order term in the measure variable.
Moreover, the result is obtained without imposing a separability structure
between the state variable $x$ and the measure variable $m$ in the
nonlinearity $\Fc$ (see for instance {\rm \cite{cardaliaguet2015master}} where such a condition is imposed to obtain a solution without common noise in the torus when  $\Fc$ plays the role of a Hamiltonian ).
\end{remark}

\subsection{W-BSDEs as the mean-field limit of classical BSDEs} \label{sec:n_player}

We now explain how the {\rm W-BSDE} can be obtained as the mean-field limit
of classical finite-dimensional BSDEs. More precisely, we derive a
quantitative estimate comparing the solution of an $n$-particle BSDE with
the solution of the limiting {\rm W-BSDE} evaluated along the empirical
measure of the particle system.

Let us work under the assumptions of {\rm \Cref{thm:existence_WBSDE_gradient}},
and let $(Y,Z)$ be the corresponding solution of the {\rm W-BSDE}
associated with $(\Fc,G)$. As in {\rm \Cref{prop:WBSDE_PDE}}, we assume in
addition that the map
\[
    (t,m,x)\longmapsto \nabla_x\delta_mY(t,m)(x)
\]
admits the derivatives
\[
    \nabla_x^2\delta_mY(t,m)(x),
    \qquad
    \nabla_y\nabla_x\delta_m^2Y(t,m)(x,y),
\]
and that these derivatives are continuous and bounded in their variables.
In particular, by the identification obtained in
{\rm \Cref{thm:existence_WBSDE_gradient}}, we have
\[
    Z(t,x,m)=\nabla_x\delta_mY(t,m)(x).
\]

Let $(N^i)_{1\le i\le n}$ be the system of independent particles with
common noise, defined by
\[
    N^i_t
    =
    X^i_0+W^i_t+\sigma_\circ W^\circ_t,
    \qquad i=1,\dots,n,
\]
and let
\[
    \mu^n_t
    :=
    \frac1n\sum_{i=1}^n\delta_{N^i_t}
\]
be the associated empirical measure. We also denote by
\[
    \Hc^n_t
    :=
    \sigma\left(N^1_s,\dots,N^n_s:\,0\le s\le t\right)
\]
the natural filtration generated by the particle system.
For each $n\ge1$, let
\[
    (Y^n,\Zbb^n)
    =
    \bigl(Y^n,(Z^{1,n},\dots,Z^{n,n})\bigr)
\]
be the $\Hc^n$-adapted solution of the classical $n$-dimensional BSDE
\begin{align}
    Y^n_t
    &=
    G(\mu^n_T)
    +
    \frac1n\sum_{i=1}^n
    \int_t^T
    \Fc\bigl(s,N^i_s,\mu^n_s,nZ^{i,n}_s\bigr)\,\mathrm{d}s
    -
    \sum_{i=1}^n
    \int_t^T
    Z^{i,n}_s\cdot
    \bigl(\mathrm{d}W^i_s+\sigma_\circ\,\mathrm{d}W^\circ_s\bigr),
    \qquad t\in[0,T].
    \label{eq:n_particle_BSDE_statement}
\end{align}
We finally introduce the probability measure under which the estimate on the
martingale integrands is stated. For $i=1,\dots,n$ and $s\in[0,T]$, set
\[
    a^i_s
    :=
    \int_0^1
    \nabla_z\Fc\Bigl(
        s,N^i_s,\mu^n_s,
        (1-\lambda)\,Z(s,N^i_s,\mu^n_s)+\lambda\,nZ^{i,n}_s
    \Bigr)\,\mathrm{d}\lambda,
\]
and define $\Q^n$ by
\[
    \frac{\mathrm{d}\Q^n}{\mathrm{d}\P}
    :=
    \mathcal E^n_T,
    \qquad
    \mathrm{d}\mathcal E^n_t
    =
    \mathcal E^n_t\sum_{i=1}^n a^i_t\cdot\mathrm{d}W^i_t,
    \qquad
    \mathcal E^n_0=1.
\]
Since $|a^i_s|\le\underline{{\rm Lip}}_T(\Fc,G)$, Novikov's condition holds and
$\Q^n$ is equivalent to $\P$. Under $\Q^n$, the processes
$W^{i,\Q}_t:=W^i_t-\int_0^ta^i_s\,\mathrm{d}s$, $i=1,\dots,n$, and $W^\circ$ are
independent Brownian motions, and the particles satisfy
\[
    \mathrm{d}N^i_t
    =
    a^i_t\,\mathrm{d}t
    +
    \mathrm{d}W^{i,\Q}_t
    +
    \sigma_\circ\,\mathrm{d}W^\circ_t .
\]
Since $a^i_t$ is an average of $\nabla_z\Fc$ along the segment between
$Z(t,N^i_t,\mu^n_t)$ and $nZ^{i,n}_t$, under $\Q^n$ the particles are driven by
an approximation of the drift of the characteristic flow \eqref{eq:max_dynamic}.

\begin{proposition} \label{prop:WBSDE_mean_field_limit}
Assume the previous conditions. Suppose moreover that the coefficients are
regular enough so that the classical BSDE
{\rm \Cref{eq:n_particle_BSDE_statement}} is well posed for every $n\ge1$,
and that the preceding derivatives of $Y$ are bounded uniformly in their
variables.
Then there exists a constant $C>0$, independent of $n$, such that, for every
$n\ge1$ and every $t\in[0,T]$,
\begin{align} \label{eq:WBSDE_particle_estimate}
    \left|Y^n_t-Y(t,\mu^n_t)\right|^2
    +
    \E^{\Q^n}\left[
        \int_t^T
        \sum_{i=1}^n
        \left|
            Z^{i,n}_s
            -
            \frac1n Z(s,N^i_s,\mu^n_s)
        \right|^2
        \,\mathrm{d}s
        \,\middle|\,
        \Hc^n_t
    \right]
    \le
    \frac{C}{n^2},
    \qquad \text{a.s.}
\end{align}
The bound on the first term holds pathwise, hence under any equivalent
probability measure. In terms of the rescaled martingale integrands,
\[
    \E^{\Q^n}\left[
        \int_0^T
        \frac1n\sum_{i=1}^n
        \left|
            nZ^{i,n}_s-Z(s,N^i_s,\mu^n_s)
        \right|^2
        \mathrm{d}s
    \right]
    \le
    \frac{C}{n}.
\]
\end{proposition}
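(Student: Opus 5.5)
We will compare $Y^n_t$ with the process $\widetilde Y^n_t:=Y(t,\mu^n_t)$, which we expand by an Itô formula for empirical-measure functionals. Since $Y(t,\cdot)$ is smooth in $m$, the finite-dimensional function $u^n(t,x^1,\dots,x^n):=Y(t,\frac1n\sum_i\delta_{x^i})$ is $C^{1,2}$ on $[0,\overline T]$. Its derivatives are
\[
\partial_{x^i}u^n=\tfrac1n\nabla_x\delta_mY(t,\mu)(x^i),
\]
\[
\partial^2_{x^ix^j}u^n=\tfrac1n\nabla_x^2\delta_mY(t,\mu)(x^i)\1_{\{i=j\}}+\tfrac1{n^2}\nabla_y\nabla_x\delta_m^2Y(t,\mu)(x^i,x^j).
\]
By \Cref{prop:WBSDE_PDE}, $Y$ is a classical solution of \eqref{eq:PDE_WBSDE}. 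We therefore apply the classical Itô formula to $u^n(s,N^1_s,\dots,N^n_s)$ and substitute $\partial_tY=-\Hc[Y]$. The noise $\mathrm dW^i+\sigma_\circ\mathrm dW^\circ$ has cross-covariation $\1_{\{i=j\}}\Ir_d+\sigma_\circ\sigma_\circ^\top$, and its second-order terms almost match $\Hc[Y](s,\mu^n_s)$. The common-noise part and the $i=j$ idiosyncratic part match exactly. The only leftover is the diagonal piece $\frac{1}{2n^2}\sum_i\Tr[\nabla_y\nabla_x\delta^2_mY(s,\mu^n_s)(N^i_s,N^i_s)]$ of the idiosyncratic contribution, which is bounded by $C/n$. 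Hence
\[
\widetilde Y^n_t=G(\mu^n_T)+\frac1n\sum_i\int_t^T\Fc\bigl(s,N^i_s,\mu^n_s,Z(s,N^i_s,\mu^n_s)\bigr)\mathrm ds-\sum_i\int_t^T\tfrac1nZ(s,N^i_s,\mu^n_s)\cdot(\mathrm dW^i_s+\sigma_\circ\mathrm dW^\circ_s)+\int_t^T\varepsilon^n_s\,\mathrm ds,
\]
with $|\varepsilon^n_s|\le C/n$. The behaviour near $T$ needs care, since the derivative bounds are only assumed uniform on $[0,\overline T]$. We will use the hypothesis that the derivatives are bounded uniformly in all variables, work on $[t,\overline T]$, and let $\overline T\uparrow T$ using continuity of $Y$ and $G(\mu^n_T)$ at the terminal time.

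Set $\delta Y:=Y^n-\widetilde Y^n$ and $\delta Z^i:=Z^{i,n}-\frac1nZ(\cdot,N^i,\mu^n)$. Subtracting the two equations, the generator difference is
\[
\frac1n\sum_i\bigl[\Fc(s,N^i,\mu^n,nZ^{i,n})-\Fc(s,N^i,\mu^n,Z(s,N^i,\mu^n))\bigr]=\sum_i a^i_s\cdot\delta Z^i_s,
\]
by the mean value theorem with exactly the $a^i$ of the statement. The $\mathrm dW^i$ drift this produces is absorbed by Girsanov under $\Q^n$, giving the linear BSDE
\[
\delta Y_t=-\int_t^T\varepsilon^n_s\,\mathrm ds-\sum_i\int_t^T\delta Z^i_s\cdot(\mathrm dW^{i,\Q}_s+\sigma_\circ\mathrm dW^\circ_s),
\]
with zero terminal value. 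Both $Y^n$ and $\widetilde Y^n$ are bounded, so the stochastic integrals are true $\Q^n$-martingales, possibly after a localization. Conditioning on $\Hc^n_t$ under $\Q^n$ gives $|\delta Y_t|\le C(T-t)/n$ pathwise, which is the first bound. The filtration issue is harmless, since $\Hc^n$ contains the information of $(W^i+\sigma_\circ W^\circ)_i$.

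For the $Z$ part, we apply Itô to $|\delta Y|^2$ under $\Q^n$. The quadratic variation of $\sum_i\int\delta Z^i\cdot(\mathrm dW^{i,\Q}+\sigma_\circ \mathrm dW^\circ)$ is
\[
\sum_i|\delta Z^i|^2+\Bigl|\sigma_\circ^\top\sum_i\delta Z^i\Bigr|^2\ge\sum_i|\delta Z^i|^2 .
\]
This yields $\E^{\Q^n}[\int_t^T\sum_i|\delta Z^i_s|^2\mathrm ds\mid\Hc^n_t]\le \E^{\Q^n}[\int_t^T2|\delta Y_s||\varepsilon^n_s|\mathrm ds\mid\Hc^n_t]\le C/n^2$. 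Multiplying by $n$ and taking $t=0$ gives the rescaled statement. The main obstacle is the first step: making the empirical-measure Itô expansion rigorous and checking that the only non-cancelling term is the $O(1/n)$ diagonal second-derivative term. This requires that \Cref{prop:WBSDE_PDE} gives the PDE pointwise at every empirical measure, which is exactly where the joint continuity of $\partial_tY$ is needed.
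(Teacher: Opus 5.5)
Your proposal is correct and follows the paper's route: the lift $U^n(t,\xbb)=Y(t,m^n_{\xbb})$, then It\^o combined with the classical PDE of \Cref{prop:WBSDE_PDE}, leaving only the $O(1/n)$ diagonal term $\frac{1}{2n}\int\Tr[\nabla_y\nabla_x\delta_m^2Y(t,\mu^n_t)(x,x)]\,\mu^n_t(\mathrm dx)$, then the linearisation through $a^i$ and Girsanov under $\Q^n$, which turn the difference into a bounded martingale. The differences are cosmetic: you get the $Y$-bound from $\delta Y_t=-\E^{\Q^n}[\int_t^T\varepsilon^n_s\,\mathrm ds\mid\Hc^n_t]$ rather than the comparison theorem, you get the $Z$-bound from It\^o on $|\delta Y|^2$ rather than the bracket of $M$, and you handle the terminal time more carefully via $\overline T\uparrow T$.
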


\begin{remark}
    $(i)$ The estimate shows that the classical $n$-particle {\rm BSDE} is not only
formally related to the {\rm W-BSDE}; it converges to it with an explicit
rate. The first component $Y^n_t$ is close to the limiting value
$Y(t,\mu^n_t)$, while the rescaled martingale integrand $nZ^{i,n}_t$
is close to the Wasserstein gradient field
$Z(t,N^i_t,\mu^n_t)$. This is the exact analogue of the classical relation
between Markovian BSDEs and gradients of value functions, but now at the
level of the empirical measure. In this sense, the {\rm W-BSDE} can be seen
as the intrinsic mean-field limit of the family of finite-dimensional
{\rm BSDEs} \eqref{eq:n_particle_BSDE_statement}.

\medskip
$(ii)$ It is worth emphasizing that the previous result does not require
$\Fc$ to have the particular structure of a Hamiltonian. In particular, we
do not impose convexity, concavity, or any separability condition between the
state variable $x$ and the measure variable $m$. The argument only relies
on the existence of a sufficiently smooth solution $p$ to the characteristic
{\rm \Cref{eq:chac:z}}.

\smallskip
This is an important point. In many approaches to PDEs on the Wasserstein
space, especially those motivated by mean field control, the nonlinearity is
often assumed to come from an optimization problem and therefore has a
Hamiltonian form. Such a structure is useful, but it also imposes restrictions
on the class of equations that can be treated. Here, by contrast, the central
object is the equation for the field $p$, which is a straight fixed
point problem. Thus, once this fixed point problem admits a smooth solution,
the construction of the {\rm W-BSDE}, the identification
\[
    Z(t,x,m)=p(t,x,m)=\nabla_x\delta_mY(t,m)(x),
\]
and the convergence from the $n$-particle BSDE follow without relying on an
underlying control interpretation.

\smallskip
An example of conditions on $(T,\Fc,G)$ ensuring the existence of such a
smooth solution $p$ is given in {\rm \cite{fabricemattiaDecoupling}}. This makes the result rather flexible:
it applies not only to Hamiltonian nonlinearities arising in mean field
control, but also to more general generators $\Fc$. In this sense, the
framework developed here appears to be more general than several existing
results in the literature, where convexity, separability, or control-based
structures are often used in an essential way; see for instance {\rm \cite{CardaliaguetAnAlgebraic2023, DAUDIN2024110660,cardaliaguet2023sharpconvergenceratesmean}}.
\end{remark}

\section{Proofs of the main results} \label{sec:proofs}

\subsection{The proof of \texorpdfstring{\rm \Cref{thm:existence_WBSDE_gradient}}{Theorem} with smooth coefficients} \label{sec:existence_smooth}

We begin with the proof in the smooth case. The purpose of this step is to
exhibit clearly how a regular solution $p$ of the characteristic equation
\eqref{eq:chac:z} generates a solution of the {\rm W-BSDE}. The construction
proceeds by recovering a functional $Y$ whose Wasserstein gradient is $p$,
namely
\[
    \nabla_x\delta_mY(t,m)(x)=p(t,x,m).
\]
The symmetry condition imposed on $\delta_m p$ in \Cref{eq:p_symmetry} is precisely the
condition which allows one to reconstruct such a potential $Y$.

Once $Y$ is constructed, the rest of the proof consists in applying the Itô
formula on the Wasserstein space along an arbitrary admissible conditional law
flow. This gives the dynamics of $Y(t,\mu_t)$, and the characteristic
equation for $p$ ensures that the drift and martingale terms match exactly
the defining identity of the {\rm W-BSDE}. Thus the smooth case contains the
main idea of the proof; the nonsmooth case will be treated afterwards by
approximation.

\medskip
A map $\Vc:[0,T)\times\R^d\times\Pc(\R^d)\longrightarrow \R$
will be called smooth if it admits bounded continuous derivatives
\[
    \nabla_x\Vc(t,x,m),\qquad
    \delta_m\Vc(t,x,m)(y),\qquad
    \delta_m^2\Vc(t,x,m)(y)(z),
\]
and if, for each $(t,x,m)$, the maps
\[
    y\longmapsto \delta_m\Vc(t,x,m)(y),
    \qquad
    (y,z)\longmapsto \delta_m^2\Vc(t,x,m)(y)(z)
\]
are bounded and twice continuously differentiable, with bounded continuous
derivatives.
Let us assume, that we have obtained a sufficiently smooth map
\[
    p=(p^1,\dots,p^d):[0,T)\times \R^d \times \Pc(\R^d)\longrightarrow \R^d
\]
satisfying \eqref{eq:chac:z}. We also assume that $p$ satisfies the symmetry condition (the proof will be done in the next section)
\begin{align} \label{eq:p_symmetry}
    \nabla_{y_j}\delta_m p^i(t,x,m)(y)
    =
    \nabla_{y_i}\delta_m p^j(t,y,m)(x),
    \qquad
    1\le i,j\le d,
\end{align}
for all $(t,m,x,y)\in[0,T)\times\R^d\times\Pc(\R^d)\times\R^d$, where we recall that
$\nabla_y$ denotes the derivative in the second spatial argument, taken before
the arguments are evaluated; on the right-hand side, that argument is $x$. Equivalently, in matrix notation,
\[
    D_y\delta_m p(t,x,m)(y)
    =
    \bigl(D_y\delta_m p(t,y,m)(x)\bigr)^\top,\quad (t,m,x,y)\in[0,T)\times\Pc(\R^d)\times(\R^d)^2.
\]
We explain how such a map gives rise to a solution of the {\rm W-BSDE}.

\medskip

For $(t,x,m)\in[0,T]\times\R^d\times\Pc(\R^d)$, define
\begin{equation}\label{eq:def_q_d}
\begin{aligned}
    q(t,x,m)
    &:=
    \E \left[
        \delta_mG(\ell^{p,t,m}_T)(N^{t,x}_T)
    \right]
    \\
    &\quad+
    \E \left[
        \int_t^T
        \Fc\left(
            s,N^{t,x}_s,\ell^{p,t,m}_s,
            p(s,N^{t,x}_s,\ell^{p,t,m}_s)
        \right)\,\mathrm{d}s
    \right]
    \\
    &\quad+
    \E \left[
        \int_t^T
        \delta_m \Fc\left(
            s,S^{t,m}_s,\ell^{p,t,m}_s,
            p(s,S^{t,m}_s,\ell^{p,t,m}_s)
        \right)(\widehat N^{t,x}_s)
        \,\mathrm{d}s
    \right].
\end{aligned}
\end{equation}
Here $U$ is independent of the Brownian noises with $\Lc(U)=m$, while $\widehat N$ denotes an independent copy of $N$, conditionally on the common noise. By differentiating under the expectation sign and using the characterization \eqref{eq:chac:z}, we obtain
\[
    \nabla_x q(t,x,m)=p(t,x,m).
\]

\medskip

Let now $\mu\in\Lc$ be associated with an initial time $e$ and an admissible drift $B:[0,T]\times\R^d\times\Pc(\R^d)\to\R^d,$
and let $X$ be a state process satisfying
\[
    \mathrm{d}X_t
    =
    B(t,X_t,\mu_t)\,\mathrm{d}t
    +
    \mathrm{d}W_t
    +
    \sigma_\circ\,\mathrm{d}W^\circ_t,
    \qquad
    \mu_t=\Lc(X_t\mid\Gc_t)=\Lc(X_t\mid\Gc_T).
\]
Here $W$ is $\R^d$-valued, $W^\circ$ is one-dimensional, and $\sigma_\circ\in\R^d$. By applying ~\cite[Theorem 4.1.]{mattiaAMcKean2026} (Backward flow formula) through the semigroup to the map $(t,x,m)\longmapsto q(t,x,m)$, we obtain, for $t \in [e,T]$,
\begin{equation} \label{eq:q_dynamics_before_F_d}
\begin{aligned}
    \mathrm{d}q(t,X_t,\mu_t)
    =
    &-
    \left[
        \Fc(t,X_t,\mu_t,p(t,X_t,\mu_t))
        +
        \int_{\R^d}
        \delta_m \Fc(t,y,\mu_t,p(t,y,\mu_t))(X_t)
        \,\mu_t(\mathrm{d}y)
    \right]\mathrm{d}t
    \\
    &+
    \int_{\R^d}
    D_y\delta_m q(t,X_t,\mu_t)(y)
    \cdot
    \left[
        B(t,y,\mu_t)
        -
        \nabla_z\Fc(t,y,\mu_t,p(t,y,\mu_t))
    \right]
    \,\mu_t(\mathrm{d}y)\,\mathrm{d}t
    \\
    &+
    \int_{\R^d}
    D_y\delta_m q(t,X_t,\mu_t)(y)
    \,\mu_t(\mathrm{d}y)
    \cdot
    \sigma_\circ\,\mathrm{d}W^\circ_t
    \\
    &+
    \nabla_x q(t,X_t,\mu_t)\cdot
    \bigl(
        B(t,X_t,\mu_t)\,\mathrm{d}t
        +
        \mathrm{d}W_t
        +
        \sigma_\circ\,\mathrm{d}W^\circ_t
    \bigr).
\end{aligned}
\end{equation}
Here $D_y\delta_m q(t,x,m)(y)\in\R^d$, and $\nabla_z\Fc$ denotes the gradient of $\Fc$ with respect to its last variable.

\medskip

We introduce the averaged generator
\[
    F(t,m)
    :=
    \int_{\R^d}
    \Fc(t,x,m,p(t,x,m))\,m(\mathrm{d}x).
\]
By the chain rule for the linear functional derivative,
\begin{equation}\label{eq:dm_F_first_d}
\begin{aligned}
    \delta_m F(t,m)(x)
    =
    &\Fc(t,x,m,p(t,x,m))
    +
    \int_{\R^d}
    \delta_m \Fc(t,y,m,p(t,y,m))(x)\,m(\mathrm{d}y)
    \\
    &+
    \int_{\R^d}
    \nabla_z\Fc(t,y,m,p(t,y,m))
    \cdot
    \delta_m p(t,y,m)(x)\,m(\mathrm{d}y).
\end{aligned}
\end{equation}
Since $\nabla_x q=p$, we have, for every $y,x\in\R^d$,
\[
    \delta_m p(t,y,m)(x)
    =
    \delta_m\nabla_x q(t,y,m)(x).
\]
Using the symmetry condition \Cref{eq:p_symmetry}, this can be rewritten as
\[
    \delta_m p(t,y,m)(x)
    =
    D_y\delta_m q(t,x,m)(y).
\]
Indeed, by \Cref{eq:p_symmetry} the two sides have the same derivative with
respect to $x$, and they therefore coincide for the normalisation of
$\delta_m p(t,y,m)(\cdot)$, which is defined up to an additive constant in its
evaluation variable.
Therefore \eqref{eq:dm_F_first_d} becomes
\begin{align}
    \delta_m F(t,m)(x)
    =
    &\Fc(t,x,m,p(t,x,m))
    +
    \int_{\R^d}
    \delta_m \Fc(t,y,m,p(t,y,m))(x)\,m(\mathrm{d}y)
    \nonumber
    \\
    &+
    \int_{\R^d}
    D_y\delta_m q(t,x,m)(y)
    \cdot
    \nabla_z\Fc(t,y,m,p(t,y,m))\,m(\mathrm{d}y).
    \label{eq:dm_F_sym_d}
\end{align}
Substituting \eqref{eq:dm_F_sym_d} into \eqref{eq:q_dynamics_before_F_d}, we obtain
\begin{equation}\label{eq:q_linear_equation_d}
\begin{aligned}
    \mathrm{d}q(t,X_t,\mu_t)
    =
    &-
    \delta_m F(t,\mu_t)(X_t)\,\mathrm{d}t
    \\
    &+
    \int_{\R^d}
    D_y\delta_m q(t,X_t,\mu_t)(y)
    \cdot
    B(t,y,\mu_t)\,\mu_t(\mathrm{d}y)\,\mathrm{d}t
    \\
    &+
    \int_{\R^d}
    D_y\delta_m q(t,X_t,\mu_t)(y)
    \,\mu_t(\mathrm{d}y)
    \cdot
    \sigma_\circ\,\mathrm{d}W^\circ_t
    \\
    &+
    \nabla_x q(t,X_t,\mu_t)\cdot
    \bigl(
        B(t,X_t,\mu_t)\,\mathrm{d}t
        +
        \mathrm{d}W_t
        +
        \sigma_\circ\,\mathrm{d}W^\circ_t
    \bigr).
\end{aligned}
\end{equation}

\medskip
Let $m \in \Pc(\R^d)$.
We now freeze the flow $(\mu_t)_{t \in [0,T]}$ at $m$ between $[0,e]$ and choose $B=0$.
Taking conditional expectation with respect to $\Gc_e$ in \eqref{eq:q_linear_equation_d}, we obtain, for $m(\mathrm{d}x)$ a.e.  $x$,
\begin{align}
    q(e,x,m)
    =
    \E\left[
        \delta_mG(M^{e,m}_T)(N^{e,x}_T)
        +
        \int_{e}^T
        \delta_m F(s,M^{e,m}_s)(N^{e,x}_s)
        \,\mathrm{d}s
        \,\middle|\,\Gc_{e}
    \right],
    \label{eq:q_frozen_representation_d}
\end{align}
where, for $m\in\Pc(\R^d)$,
\begin{equation}\label{eq:def_M_flow}
    M^{t,m}_s
    :=
    \Lc(N^{t,U}_s\mid\Gc_T)
    =
    \Lc(N^{t,U}_s\mid\Gc_s),
    \qquad s\in[t,T],
\end{equation}
with $U$ independent of the Brownian noises and $\Lc(U)=m$. This is true for any $e \in [0,T]$ and $m \in \Pc(\R^d)$.

\medskip

We next define the candidate value functional. For $(t,m)\in[0,T]\times\Pc(\R^d)$, set
\begin{align}
    Y(t,m)
    :=
    \E\left[
        G(M^{t,m}_T)
        +
        \int_t^T F(s,M^{t,m}_s)\,\mathrm{d}s
    \right].
    \label{eq:def_Y_smooth_d}
\end{align}
Equivalently, along any admissible flow $\mu\in\Lc$,
\begin{align}
    Y(t,\mu_t)
    =
    \E\left[
        G(M^{t,\mu_t}_T)
        +
        \int_t^T
        F(s,M^{t,\mu_t}_s)
        \,\mathrm{d}s
        \,\middle|\,\Gc_t
    \right].
    \label{eq:Y_along_flow_d}
\end{align}

\medskip

Differentiating \eqref{eq:def_Y_smooth_d} with respect to the measure argument gives
\begin{align}
    \delta_m Y(t,m)(x)
    =
    &\E\left[
        \delta_mG(M^{t,m}_T)(N^{t,x}_T) +
        \int_t^T
        \delta_m F(s,M^{t,m}_s)(N^{t,x}_s)
        \,\mathrm{d}s
    \right].
    \label{eq:dm_Y_representation_d}
\end{align}
Comparing with \eqref{eq:q_frozen_representation_d}, we deduce that
\[
    \delta_mY(t,m)(x)
    =
    q(t,x,m),
    \qquad
    \text{for}\,m(\mathrm{d}x) \text{-a.e. }x.
\]
Consequently, since $\nabla_x q=p$,
\[
    \nabla_x\delta_mY(t,m)(x)
    =
    p(t,x,m),
    \qquad
    \text{for}\,m(\mathrm{d}x) \text{-a.e. }x.
\]

\medskip

It remains to verify that $(Y,p)$ satisfies the {\rm W-BSDE}. Applying the Itô formula through the semigroup to $(t,m)\mapsto Y(t,m)$ along the admissible flow $\mu$ (see \cite[Theorem 4.1.]{mattiaAMcKean2026}), and using the previous identification, gives
\begin{align}
    \mathrm{d}Y(t,\mu_t)
    =
    &-
    F(t,\mu_t)\,\mathrm{d}t +
    \int_{\R^d}
    \nabla_x\delta_mY(t,\mu_t)(x)
    \cdot
    B(t,x,\mu_t)\,\mu_t(\mathrm{d}x)\,\mathrm{d}t
    \nonumber
    \\
    &+
    \int_{\R^d}
    \nabla_x\delta_mY(t,\mu_t)(x)
    \,\mu_t(\mathrm{d}x)
    \cdot
    \sigma_\circ\,\mathrm{d}W^\circ_t
    \nonumber
    \\
    =
    &-
    \int_{\R^d}
    \Fc(t,x,\mu_t,p(t,x,\mu_t))\,\mu_t(\mathrm{d}x)\,\mathrm{d}t
    +
    \int_{\R^d}
    p(t,x,\mu_t)\cdot B(t,x,\mu_t)\,\mu_t(\mathrm{d}x)\,\mathrm{d}t
    \nonumber
    \\
    &+
    \int_{\R^d}
    p(t,x,\mu_t)\,\mu_t(\mathrm{d}x)
    \cdot
    \sigma_\circ\,\mathrm{d}W^\circ_t.
    \label{eq:Y_dynamics_smooth_d}
\end{align}
Integrating from $t$ to $T$ and using $Y(T,\mu_T)=G(\mu_T)$, we get
\begin{equation}\label{eq:Y_integrated_smooth_d}
\begin{aligned}
    Y(t,\mu_t)
    =
    G(\mu_T)
    &+
    \int_t^T
    \int_{\R^d}
    \Fc(r,x,\mu_r,p(r,x,\mu_r))\,\mu_r(\mathrm{d}x)
    \,\mathrm{d}r
    \\
    &-
    \int_t^T
    \int_{\R^d}
    p(r,x,\mu_r)\cdot B(r,x,\mu_r)\,\mu_r(\mathrm{d}x)
    \,\mathrm{d}r
    \\
    &-
    \int_t^T
    \left(
        \int_{\R^d}
        p(r,x,\mu_r)\,\mu_r(\mathrm{d}x)
    \right)
    \cdot
    \sigma_\circ\,\mathrm{d}W^\circ_r.
\end{aligned}
\end{equation}
Since $\mu_r=\Lc(X_r\mid\Gc_T)$, the first two integral terms can be rewritten as
\[
    \int_{\R^d}
    \Fc(r,x,\mu_r,p(r,x,\mu_r))\,\mu_r(\mathrm{d}x)
    =
    \E\left[
        \Fc(r,X_r,\mu_r,p(r,X_r,\mu_r))
        \,\middle|\,\Gc_T
    \right],
\]
and
\[
    \int_{\R^d}
    p(r,x,\mu_r)\cdot B(r,x,\mu_r)\,\mu_r(\mathrm{d}x)
    =
    \E\left[
        p(r,X_r,\mu_r)\cdot B(r,X_r,\mu_r)
        \,\middle|\,\Gc_T
    \right].
\]
Moreover,
\[
    \int_{\R^d}
    p(r,x,\mu_r)\,\mu_r(\mathrm{d}x)
    =
    \E\left[
        p(r,X_r,\mu_r)
        \,\middle|\,\Gc_T
    \right].
\]
Thus \eqref{eq:Y_integrated_smooth_d} is precisely the defining identity of the {\rm W-BSDE} associated with $(\Fc,G)$, with $Z(t,x,\mu_t)=p(t,x,\mu_t)$.
The integrability conditions follow from the smooth regime we are in.
Therefore, in the smooth setting, $(Y,p)$ is a {\rm W-BSDE} solution associated with $(\Fc,G)$.

\subsection{Proof of the symmetry property \texorpdfstring{\rm \Cref{eq:p_symmetry}}{equation}}

We now justify the symmetry property used in the previous step, namely {\rm \Cref{eq:p_symmetry}}. Recall that,
in order to reconstruct a potential $Y$ from the characteristic field $p$,
it is not enough to know that $p$ solves \eqref{eq:chac:z}. One also needs to ensure that $p$
can indeed be written as the gradient of a linear functional derivative,
namely
\[
    p(t,x,m)=\nabla_x\delta_mY(t,m)(x).
\]
The role of the symmetry condition \eqref{eq:p_symmetry} is precisely to
provide this compatibility condition.

\smallskip
The proof below explains why this symmetry is not an additional artificial
constraint in the smooth regime. It follows from the structure of the
characteristic equation itself. To keep the argument readable, we present it
in the simplified case where
\[
    \Fc(t,x,m,z)=f(z),
\]
with $f$ smooth, and in dimension $d=1$. This case already contains the
main idea. The general case is obtained by the same method, with heavier
notation coming from the dependence of $\Fc$ on $(t,x,m)$ and from the
vectorial nature of $p$.
The strategy is the following. Starting from the auxiliary function $q$
constructed in the previous subsection, we study the mixed derivative
\[
    \nabla_y\delta_m q(t,x,m)(y)
\]
and compare it with the same quantity after exchanging the two spatial
variables. The difference between these two terms defines a symmetry defect.
By deriving a representation formula for this defect and using a backward
Gronwall argument, we prove that the defect must vanish identically. Since
$\partial_xq=p$, this gives the desired symmetry property for $p$.

\medskip
Let $p$ be a smooth solution of \eqref{eq:chac:z}, and let $q$ be defined as in \eqref{eq:def_q_d}. We use throughout the notation introduced in the previous steps.

\medskip

Let $\mu\in\Lc$ be an admissible flow associated with an initial time $e \in [0,T]$ and an admissible drift $B$. Let $X$ be a process satisfying
\[
    \mathrm{d}X_t
    =
    \mathrm{d}W_t
    +
    \sigma_\circ\,\mathrm{d}W^\circ_t.
\]
By the Itô formula through the semigroup (  \cite[Theorem 4.1.]{mattiaAMcKean2026}) applied to the map $(t,x,m)\mapsto q(t,x,m)$, and using the identity $\partial_x q=p$, we obtain
\begin{equation}\label{eq:dynamics_q_symmetry}
\begin{aligned}
    \mathrm{d}q(t,X_t,\mu_t)
    =
    &-
    f\bigl(p(t,X_t,\mu_t)\bigr)\,\mathrm{d}t
    \\
    &+
    \int_\R
    \partial_y\delta_m q(t,X_t,\mu_t)(\xi)
    \Bigl(
        B(t,\xi,\mu_t)
        -
        f'\bigl(p(t,\xi,\mu_t)\bigr)
    \Bigr)
    \mu_t(\mathrm{d}\xi)\,\mathrm{d}t
    \\
    &+
    \int_\R
    \partial_y\delta_m q(t,X_t,\mu_t)(\xi)
    \mu_t(\mathrm{d}\xi)\,
    \sigma_\circ\,\mathrm{d}W^\circ_t
    \\
    &+
    \partial_xq(t,X_t,\mu_t)
    \bigl(
        \mathrm{d}W_t+\sigma_\circ\,\mathrm{d}W^\circ_t
    \bigr).
\end{aligned}
\end{equation}

We introduce the auxiliary functional
\begin{align}
    \Jc(t,x,m)
    :=
    f\bigl(p(t,x,m)\bigr)
    +
    \int_\R
    f'\bigl(p(t,\xi,m)\bigr)
    \partial_\xi\delta_m q(t,x,m)(\xi)\,
    m(\mathrm{d}\xi).
    \label{eq:def_J_symmetry}
\end{align}
With this notation, \eqref{eq:dynamics_q_symmetry} may be rewritten in the compact form
\begin{align}
    \mathrm{d}q(t,X_t,\mu_t)
    =
    &-
    \Jc(t,X_t,\mu_t)\,\mathrm{d}t
    +
    \int_\R
    \partial_y\delta_m q(t,X_t,\mu_t)(\xi)
    B(t,\xi,\mu_t)\,\mu_t(\mathrm{d}\xi)\,\mathrm{d}t
    \nonumber
    \\
    &+
    \int_\R
    \partial_y\delta_m q(t,X_t,\mu_t)(\xi)
    \mu_t(\mathrm{d}\xi)\,
    \sigma_\circ\,\mathrm{d}W^\circ_t
    +
    \partial_x q(t,X_t,\mu_t)
    \bigl(
        \mathrm{d}W_t+\sigma_\circ\,\mathrm{d}W^\circ_t
    \bigr).
    \label{eq:dynamics_q_J}
\end{align}

As in the previous step, we now take $m \in \Pc(\R^d)$ freeze the flow at $m$ between $[0,e]$ with $B=0$.
Taking the conditional expectation with respect to $\Gc_e$, the martingale terms vanish and we obtain, for every $m(\mathrm{d}x)$-a.e.,
\begin{align}
    q(e,x,m)
    =
    \E\left[
        \delta_m G\bigl(M^{e,m}_T\bigr)
        \bigl(N^{e,x}_T\bigr)
        +
        \int_e^T
        \Jc\bigl(s,N^{e,x}_s,M^{e,m}_s\bigr)
        \,\mathrm{d}s
        \,\middle|\,\Gc_e
    \right].
    \label{eq:q_representation_symmetry}
\end{align}
This holds for any $(e,m)$, where $M^{e,m}$ is the null-drift flow introduced in \Cref{eq:def_M_flow}.

Let $\widehat N$ be an independent copy of $N$, conditionally on the common noise. Differentiating \eqref{eq:q_representation_symmetry} with respect to the measure argument gives
\begin{align}
    \delta_m q(t,x,m)(y)
    =
    \E\left[
        \delta_m^2G\bigl(M^{t,m}_T\bigr)
        \bigl(N^{t,x}_T\bigr)
        \bigl(\widehat N^{t,y}_T\bigr)
    +
        \int_t^T
        \delta_m\Jc
        \bigl(s,N^{t,x}_s,M^{t,m}_s\bigr)
        \bigl(\widehat N^{t,y}_s\bigr)
        \,\mathrm{d}s
    \right].
    \label{eq:dm_hat_q_symmetry}
\end{align}

We shall also use the derivative of $\Jc$. From \eqref{eq:def_J_symmetry}, a direct differentiation with respect to the measure variable yields
\begin{equation}\label{eq:dm_J_symmetry}
\begin{aligned}
    \delta_m\Jc(t,x,m)(y)
    =
    &f'\bigl(p(t,x,m)\bigr)\,
    \delta_m p(t,x,m)(y)
    \\
    &+
    f'\bigl(p(t,y,m)\bigr)\,
    \partial_y\delta_m q(t,x,m)(y)
    \\
    &+
    \int_\R
    f''\bigl(p(t,\xi,m)\bigr)\,
    \delta_m p(t,\xi,m)(y)\,
    \partial_\xi\delta_m q(t,x,m)(\xi)\,
    m(\mathrm{d}\xi)
    \\
    &+
    \int_\R
    f'\bigl(p(t,\xi,m)\bigr)\,
    \partial_\xi\delta_m^2 q(t,x,m)(\xi)(y)\,
    m(\mathrm{d}\xi).
\end{aligned}
\end{equation}
We now apply again the Itô formula through the semigroup to the map
\[
    (t,x,m,y)\longmapsto \delta_m q(t,x,m)(y)
\]
along two conditionally independent copies $X$ and $\widehat X$, both driven by the same common noise. This gives
\begin{equation}\label{eq:ito_dm_q_symmetry}
\begin{aligned}
    \mathrm{d}\delta_m q(t,X_t,\mu_t)(\widehat X_t)
    =
    &-
    \delta_m\Jc(t,X_t,\mu_t)(\widehat X_t)\,\mathrm{d}t
    \\
    &+
    \int_\R
    \partial_z\delta_m^2q(t,X_t,\mu_t)(\widehat X_t)(z)
    B(t,z,\mu_t)\,\mu_t(\mathrm{d}z)\,\mathrm{d}t
    \\
    &+
    \int_\R
    \partial_z\delta_m^2q(t,X_t,\mu_t)(\widehat X_t)(z)
    \mu_t(\mathrm{d}z)\,
    \sigma_\circ\,\mathrm{d}W^\circ_t
    \\
    &+
    \partial_x\delta_m q(t,X_t,\mu_t)(\widehat X_t)
    \bigl(
        \mathrm{d}W_t+\sigma_\circ\,\mathrm{d}W^\circ_t
    \bigr)
    \\
    &+
    \partial_y\delta_m q(t,X_t,\mu_t)(\widehat X_t)
    \bigl(
        \mathrm{d}\widehat W_t+\sigma_\circ\,\mathrm{d}W^\circ_t
    \bigr).
\end{aligned}
\end{equation}

At this stage, we choose the drift in the preceding identity to be
\[
    B(s,z,m)=f'\bigl(p(s,z,m)\bigr),
    \qquad s\ge e.
\]
Using the symmetry of the second-order linear functional derivative,
\[
    \delta_m^2 q(t,x,m)(y)(z)
    =
    \delta_m^2 q(t,x,m)(z)(y),
\]
the last term in \eqref{eq:dm_J_symmetry} cancels with the drift term in
\eqref{eq:ito_dm_q_symmetry}. Together with $\partial_x q=p$, the representation
\eqref{eq:dm_hat_q_symmetry} then yields, for arbitrary $m\in\Pc(\R)$,
\begin{equation}\label{eq:dm_q_representation_symmetry}
\begin{aligned}
    \delta_m q(e,x,m)(y)
    =
    &\E\bigg[
        \delta_m^2G\bigl(\ell^{e,m}_T\bigr)
        \bigl(N^{e,x}_T\bigr)
        \bigl(\widehat N^{e,y}_T\bigr)
        \\
        &\quad+
        \int_e^T
        f'\bigl(p(s,N^{e,x}_s,\ell^{e,m}_s)\bigr)
        \delta_m p
        \bigl(s,N^{e,x}_s,\ell^{e,m}_s\bigr)
        \bigl(\widehat N^{e,y}_s\bigr)
        \,\mathrm{d}s
        \\
        &\quad+
        \int_e^T
        f'\bigl(p(s,\widehat N^{e,y}_s,\ell^{e,m}_s)\bigr)
        \partial_y\delta_m q
        \bigl(s,N^{e,x}_s,\ell^{e,m}_s\bigr)
        \bigl(\widehat N^{e,y}_s\bigr)
        \,\mathrm{d}s
        \\
        &\quad+
        \int_e^T
        f''\bigl(p(s,\widetilde S^{e,m}_s,\ell^{e,m}_s)\bigr)
        \delta_m p
        \bigl(s,\widetilde S^{e,m}_s,\ell^{e,m}_s\bigr)
        \bigl(\widehat N^{e,y}_s\bigr)
        \partial_y\delta_m q
        \bigl(s,N^{e,x}_s,\ell^{e,m}_s\bigr)
        \bigl(\widetilde S^{e,m}_s\bigr)
        \,\mathrm{d}s
    \bigg].
\end{aligned}
\end{equation}
Here $\widetilde S^{t,m}$ denotes an independent copy with conditional law
$\ell_s^{t,m}$ given the common noise. This is true for any $(e,m)$.
Finally, differentiating \eqref{eq:dm_q_representation_symmetry} with respect to the first spatial variable $x$, and using the integration by parts formula, gives
\begin{equation}\label{eq:px_dm_q_symmetry}
\begin{aligned}
    \partial_x\delta_m q(t,x,m)(y)
    =
    &\E\bigg[
        \delta_m^2G\bigl(\ell^{t,m}_T\bigr)
        \bigl(N^{t,x}_T\bigr)
        \bigl(\widehat N^{t,y}_T\bigr)
        \frac{W_T-W_t}{T-t}
        \\
        &\quad+
        \int_t^T
        f'\bigl(p(s,N^{t,x}_s,\ell^{t,m}_s)\bigr)
        \delta_m p
        \bigl(s,N^{t,x}_s,\ell^{t,m}_s\bigr)
        \bigl(\widehat N^{t,y}_s\bigr)
        \frac{W_s-W_t}{s-t}\,\mathrm{d}s
        \\
        &\quad+
        \int_t^T
        f'\bigl(p(s,\widehat N^{t,y}_s,\ell^{t,m}_s)\bigr)
        \partial_y\delta_m q
        \bigl(s,N^{t,x}_s,\ell^{t,m}_s\bigr)
        \bigl(\widehat N^{t,y}_s\bigr)
        \frac{W_s-W_t}{s-t}\,\mathrm{d}s
        \\
        &\quad+
        \int_t^T
        f''\bigl(p(s,\widetilde S^{t,m}_s,\ell^{t,m}_s)\bigr)
        \delta_m p
        \bigl(s,\widetilde S^{t,m}_s,\ell^{t,m}_s\bigr)
        \bigl(\widehat N^{t,y}_s\bigr)
        \\
        &\hspace{4.8cm}
        \times
        \partial_y\delta_m q
        \bigl(s,N^{t,x}_s,\ell^{t,m}_s\bigr)
        \bigl(\widetilde S^{t,m}_s\bigr)
        \frac{W_s-W_t}{s-t}\,\mathrm{d}s
    \bigg].
\end{aligned}
\end{equation}
Similarly, differentiating \eqref{eq:dm_q_representation_symmetry} with respect to the second spatial variable $y$ gives
\begin{equation}    \label{eq:py_dm_q_symmetry}
\begin{aligned}
    \partial_y\delta_m q(t,x,m)(y)
    =
    &\E\bigg[
        \delta_m^2G\bigl(\ell^{t,m}_T\bigr)
        \bigl(N^{t,x}_T\bigr)
        \bigl(\widehat N^{t,y}_T\bigr)
        \frac{\widehat W_T-\widehat W_t}{T-t}
        \\
        &\quad+
        \int_t^T
        f'\bigl(p(s,N^{t,x}_s,\ell^{t,m}_s)\bigr)
        \delta_m p
        \bigl(s,N^{t,x}_s,\ell^{t,m}_s\bigr)
        \bigl(\widehat N^{t,y}_s\bigr)
        \frac{\widehat W_s-\widehat W_t}{s-t}\,\mathrm{d}s
        \\
        &\quad+
        \int_t^T
        f'\bigl(p(s,\widehat N^{t,y}_s,\ell^{t,m}_s)\bigr)
        \partial_y\delta_m q
        \bigl(s,N^{t,x}_s,\ell^{t,m}_s\bigr)
        \bigl(\widehat N^{t,y}_s\bigr)
        \frac{\widehat W_s-\widehat W_t}{s-t}\,\mathrm{d}s
        \\
        &\quad+
        \int_t^T
        f''\bigl(p(s,\widetilde S^{t,m}_s,\ell^{t,m}_s)\bigr)
        \delta_m p
        \bigl(s,\widetilde S^{t,m}_s,\ell^{t,m}_s\bigr)
        \bigl(\widehat N^{t,y}_s\bigr)
        \\
        &\hspace{4.8cm}
        \times
        \partial_y\delta_m q
        \bigl(s,N^{t,x}_s,\ell^{t,m}_s\bigr)
        \bigl(\widetilde S^{t,m}_s\bigr)
        \frac{\widehat W_s-\widehat W_t}{s-t}\,\mathrm{d}s
    \bigg].
\end{aligned}
\end{equation}
These two formulas are the starting point for the Gronwall argument proving the symmetry.

\medskip

We now prove the desired symmetry by a backward Gronwall argument. Define the symmetry defect
\begin{align*}
    \Delta(t,x,y,m)
    :=
    \partial_y\delta_m q(t,x,m)(y)
    -
    \partial_x\delta_m q(t,y,m)(x).
\end{align*}
Our goal is to prove that $\Delta\equiv0$. Recall that $\partial_x q=p$.

\medskip
Let us compare \eqref{eq:py_dm_q_symmetry} with \eqref{eq:px_dm_q_symmetry} written at $(t,y,m)(x)$. After exchanging the two independent Brownian copies in the latter formula, the terminal contributions cancel by the symmetry
\[
    \delta_m^2G(m)(a)(b)=\delta_m^2G(m)(b)(a).
\]

For notational convenience, set
\[
    R(s,a,b,\nu):=\partial_y\delta_m q(s,a,\nu)(b),
    \qquad
    P(s,a,b,\nu):=\delta_m p(s,a,\nu)(b).
\]
Then
\[
    \Delta(s,a,b,\nu)=R(s,a,b,\nu)-P(s,b,a,\nu).
\]
The integrand appearing in \eqref{eq:dm_q_representation_symmetry} can be written as
\begin{equation}\label{eq:A_corrected_symmetry}
\begin{aligned}
    \Ac(s,a,b,\nu)
    &:=
    f'(p(s,a,\nu))P(s,a,b,\nu)
    +
    f'(p(s,b,\nu))R(s,a,b,\nu)
    \\
    &\quad+
    \int_\R
    f''(p(s,\xi,\nu))
    P(s,\xi,b,\nu)
    R(s,a,\xi,\nu)\,\nu(\mathrm{d}\xi).
\end{aligned}
\end{equation}
Using
\[
    P(s,a,b,\nu)=R(s,b,a,\nu)-\Delta(s,b,a,\nu),
    \qquad
    P(s,\xi,b,\nu)=R(s,b,\xi,\nu)-\Delta(s,b,\xi,\nu),
\]
we get
\begin{equation}\label{eq:A_difference_defect}
\begin{aligned}
    \Ac(s,a,b,\nu)-\Ac(s,b,a,\nu)
    =
    &f'(p(s,b,\nu))\Delta(s,a,b,\nu)
    -
    f'(p(s,a,\nu))\Delta(s,b,a,\nu)
    \\
    &+
    \int_\R
    f''(p(s,\xi,\nu))
    \Big[
        R(s,b,\xi,\nu)\Delta(s,a,\xi,\nu)
        -
        R(s,a,\xi,\nu)\Delta(s,b,\xi,\nu)
    \Big]\nu(\mathrm{d}\xi).
\end{aligned}
\end{equation}
Thus the difference of the two representations is controlled only by the defect $\Delta$.

\medskip
More precisely, using the uniform boundedness of $f'$, $f''$, and of the derivatives of $q$, together with the standard estimate
\[
    \E\left[
        \left|\frac{W_s-W_t}{s-t}\right|
        +
        \left|\frac{\widehat W_s-\widehat W_t}{s-t}\right|
    \right]
    \le
    \frac{C}{\sqrt{s-t}},
    \qquad t<s\le T,
\]
we obtain
\begin{equation}\label{eq:symmetry_defect_estimate}
\begin{aligned}
    |\Delta(t,x,y,m)|
    &\le
    C
    \int_t^T
    \frac{
        \E\left[
            \left|
            \Delta(s,N^{t,x}_s,\widehat N^{t,y}_s,\ell^{t,m}_s)
            \right|_\infty
        \right]
    }{\sqrt{s-t}}
    \,\mathrm{d}s
    \\
    &\quad+
    C
    \int_t^T
    \frac{
        \E\left[
            \int_\R
            \left|
            \Delta(s,N^{t,x}_s,\xi,\ell^{t,m}_s)
            \right|_\infty
            \ell^{t,m}_s(\mathrm{d}\xi)
        \right]
    }{\sqrt{s-t}}
    \,\mathrm{d}s
    \\
    &\quad+
    C
    \int_t^T
    \frac{
        \E\left[
            \int_\R
            \left|
            \Delta(s,\widehat N^{t,y}_s,\xi,\ell^{t,m}_s)
            \right|_\infty
            \ell^{t,m}_s(\mathrm{d}\xi)
        \right]
    }{\sqrt{s-t}}
    \,\mathrm{d}s.
\end{aligned}
\end{equation}
In particular, if
\[
    D(t):=
    \sup_{x,y\in\R,\;m\in\Pc(\R)}
    |\Delta(t,x,y,m)|,
\]
then \eqref{eq:symmetry_defect_estimate} implies
\[
    D(t)
    \le
    C\int_t^T\frac{D(s)}{\sqrt{s-t}}\,\mathrm{d}s,
    \qquad t\in[0,T).
\]

To conclude, define $\overline D(t):=\sup_{r\in[t,T]}D(r)$. Then, for all $t\le r\le T$,
\[
    D(r)
    \le
    C\overline D(t)\int_r^T\frac{\mathrm{d}s}{\sqrt{s-r}}
    =
    2C\sqrt{T-r}\,\overline D(t).
\]
Taking the supremum over $r\in[t,T]$ gives
\[
    \overline D(t)\le 2C\sqrt{T-t}\,\overline D(t).
\]
Choose $\varepsilon>0$ such that $2C\sqrt{\varepsilon}<1$. Then $\overline D(t)=0$ for every $t\in[T-\varepsilon,T]$, and hence
\[
    \Delta(t,x,y,m)=0,
    \qquad t\in[T-\varepsilon,T].
\]
Iterating the argument backward in time, and using the fact that the constant $C$ is uniform, we obtain
\[
    \Delta(t,x,y,m)=0,
    \qquad
    (t,x,y,m)\in[0,T]\times\R^2\times\Pc(\R).
\]
Therefore,
\[
    \partial_y\delta_m q(t,x,m)(y)
    =
    \partial_x\delta_m q(t,y,m)(x).
\]
Since $\partial_xq=p$, this is equivalently
\[
    \partial_y\delta_m q(t,x,m)(y)
    =
    \delta_m p(t,y,m)(x).
\]
Differentiating with respect to the remaining spatial variable gives the desired symmetry property for $p$,
\[
    \partial_y\delta_m p(t,x,m)(y)
    =
    \partial_y\delta_m p(t,y,m)(x),
\]
which is {\rm \Cref{eq:p_symmetry}} in the scalar case $d=1$.

\subsection{Proof of \texorpdfstring{{\rm \Cref{thm:existence_WBSDE_gradient}} under {\rm \Cref{assum:gradient}}}{existence under assumptions}} \label{sec:existence_general}

We now prove {\rm \Cref{thm:existence_WBSDE_gradient}} under the general
assumption {\rm \Cref{assum:gradient}}. The smooth case treated above gives
the construction of a W-BSDE solution for regular coefficients. The goal of
this subsection is to remove this smoothness restriction by a stability
argument.

\smallskip
We approximate $(\Fc,G)$ by smooth coefficients $(\Fc^k,G^k)$. For each
$k$, the previous result gives a smooth characteristic field $p^k$ and a
corresponding W-BSDE solution $(Y^k,Z^k)$. The estimates imposed in
{\rm \Cref{assum:gradient}} are uniform along the approximation and imply
compactness of the sequence $(p^k)_{k\ge1}$. Passing to the limit in the
characteristic equation identifies the limit with the prescribed field $p$.
Finally, the W-BSDE identities pass to the limit by dominated convergence,
yielding a solution $(Y,Z)$ associated with the original coefficients and
satisfying $Z=p$.

\smallskip
Let us briefly recall a standard regularization procedure in the measure variable.
Let $\Ac:\Pc(\R^d)\to\R$ be bounded and continuous. For $n\ge1$ and
$\varepsilon>0$, let $G_\varepsilon$ denote the density of a centered Gaussian
random variable with covariance matrix $\varepsilon\Ir_d$. We define
\[
    \Ac_n(\boldsymbol y)
    :=
    \Ac\left(\frac1n\sum_{i=1}^n\delta_{y_i}\right),
    \qquad
    \boldsymbol y=(y_1,\ldots,y_n),
\]
and set
\begin{align} \label{eq:regularization_measure}
\widehat{\Ac}_{n,\varepsilon}(m)
:=
\int_{(\R^d)^n}\int_{(\R^d)^n}
    \Ac_n(\boldsymbol y)
    G_\varepsilon(\boldsymbol x-\boldsymbol y)
    \,\mathrm d\boldsymbol y\,
    m(\mathrm dx_1)\cdots m(\mathrm dx_n).
\end{align}
For fixed $n$ and $\varepsilon$, the map
$m\mapsto \widehat{\Ac}_{n,\varepsilon}(m)$ admits linear functional derivatives
of all orders, and these derivatives are smooth in the spatial variables. Moreover,
for every $m\in\Pc(\R^d)$,
\[
    \lim_{n\to\infty}\lim_{\varepsilon\downarrow0}
    \widehat{\Ac}_{n,\varepsilon}(m)
    =
    \Ac(m).
\]
If $\Ac$ has additional regularity in the measure variable, one can also obtain
convergence of the corresponding derivatives. We refer, for instance, to
\citeauthor*{AndreaOnsmooth2023}~\cite{AndreaOnsmooth2023} for related smoothing
results on spaces of probability measures.

\medskip

We now prove the existence of a {\rm W-BSDE} solution under
{\rm \Cref{assum:gradient}}. Let

$$
    p:[0,T)\times\R^d\times\Pc(\R^d)\longrightarrow\R^d
$$

be the unique characteristic field associated with $(T,\Fc,G)$ in the sense of
{\rm \Cref{assum:gradient}}.

\medskip
The proof proceeds by approximation. We first approximate the coefficients by
smooth ones and solve the corresponding characteristic equations. In the smooth
setting, the symmetry property established above allows us to construct a
{\rm W-BSDE} solution whose $Z$-component coincides with the corresponding
characteristic field. We then use the stability of the characteristic equation
and of the W-BSDE representation to pass to the limit and recover a solution
associated with the original coefficients $(\Fc,G)$.

\medskip

Let $\bigl(\Fc^k,G^k\bigr)_{k\ge1}$ be a sequence of smooth coefficients
approximating $(\Fc,G)$. For instance, one may use the regularization procedure
introduced in \Cref{eq:regularization_measure}, combined, when necessary, with
a standard mollification in the finite-dimensional variables. We choose the
approximation so that

$$
    \bigl(\Fc^k(t,x,m,z),G^k(m)\bigr)
    \longrightarrow
    \bigl(\Fc(t,x,m,z),G(m)\bigr),
$$

locally uniformly in $(x,m,z)$, for every $t\in[0,T]$, and, similarly,

$$
    \bigl(
        \delta_mG^k,
        \delta_m\Fc^k,
        \nabla_z\Fc^k
    \bigr)
    \longrightarrow
    \bigl(
        \delta_mG,
        \delta_m\Fc,
        \nabla_z\Fc
    \bigr)
$$

locally uniformly in their respective variables. In addition, the
regularization can be chosen so that

$$
    \underline{{\rm Lip}}_T(\Fc^k,G^k)
    \longrightarrow
    \underline{{\rm Lip}}_T(\Fc,G),
$$

that is,

$$
    {\rm Lip}_T
    \bigl(
        \delta_mG^k,
        \Fc^k,
        \delta_m\Fc^k,
        \nabla_z\Fc^k
    \bigr)
    \longrightarrow
    {\rm Lip}_T
    \bigl(
        \delta_mG,
        \Fc,
        \delta_m\Fc,
        \nabla_z\Fc
    \bigr).
$$

\medskip

For every $k$, the coefficients $(\Fc^k,G^k)$ are smooth. We may therefore
apply the smooth well-posedness result for the characteristic equation given in {fabricemattiaDecoupling}. This gives a unique smooth map

$$
    p^k:[0,T)\times\R^d\times\Pc(\R^d)\longrightarrow\R^d
$$

satisfying \eqref{eq:chac:z} with $(\Fc,G)$ replaced by $(\Fc^k,G^k)$.

\medskip
Let us emphasize that the time horizon $T$ can be chosen independently of $k$.
Indeed, in the fixed-point argument used to construct $p^k$, the admissible
time horizon depends only on the first-order Lipschitz bounds entering the
nonlinear characteristic equation, namely on quantities of the form

$$
    \underline{{\rm Lip}}_T(\Fc^k,G^k),
$$

and not on the higher-order derivatives introduced by the smoothing procedure.
The latter derivatives are needed in order to propagate smoothness and justify
the differentiations performed after the fixed point has been constructed, but
they enter only linearly in the corresponding derivative equations. In
particular, they do not affect the contraction condition defining the
characteristic field itself. Since

$$
    \underline{{\rm Lip}}_T(\Fc^k,G^k)
    \longrightarrow
    \underline{{\rm Lip}}_T(\Fc,G),
$$

the contraction constants remain in a fixed bounded set. Hence, after
discarding finitely many indices if necessary, the same time horizon $T$ may be
used for all $k$.

\medskip

For each $k$, the smooth characteristic field $p^k$ satisfies the symmetry
property established in the previous subsection. Consequently, the smooth
construction applies and yields a {\rm W-BSDE} solution

$$
    (Y^k,Z^k)
$$

associated with $(\Fc^k,G^k)$, with

$$
    Z^k(t,x,m)=p^k(t,x,m).
$$

More precisely, if

$$
    F^k(t,m)
    :=
    \int_{\R^d}
    \Fc^k\bigl(t,x,m,p^k(t,x,m)\bigr)\,m(\mathrm{d}x),
$$

then
\begin{equation}\label{eq:Yk_rep_existence}
\begin{aligned}
Y^k(t,m)
&=
\E\left[
G^k(M^{t,m}_T)
+
\int_t^T
F^k(s,M^{t,m}_s)\,\mathrm{d}s
\right]
\\
&=
\E\left[
G^k(M^{t,m}_T)
+
\int_t^T
\Fc^k\bigl(
s,N^{t,U}_s,M^{t,m}_s,
p^k(s,N^{t,U}_s,M^{t,m}_s)
\bigr)\,\mathrm{d}s
\right],
\end{aligned}
\end{equation}
where $\Lc(U)=m$ and

$$
    M^{t,m}_s
    :=
    \Lc(N^{t,U}_s\mid\Gc_T)
    =
    \Lc(N^{t,U}_s\mid\Gc_s),
    \qquad s\in[t,T].
$$

\medskip

We next derive estimates which are uniform in $k$. By the a priori estimates
for the characteristic equation, together with the convergence of
$\underline{{\rm Lip}}_T(\Fc^k,G^k)$, for every $\overline T<T$ there exist constants $\Kr>0$, $\alpha\in(0,1)$,
depending only on $\overline T$, $T$ and $\underline{{\rm Lip}}_T(\Fc,G)$, such
that
\begin{align}
|p^k(t,x,m)-p^k(t',x',m')|
\le
\Kr
\left(
|t-t'|^\alpha
+
|x-x'|
+
\Wc_\circ(m,m')
\right),
\label{eq:uniform_pk_estimate}
\end{align}
for all $(t,t',x,x',m,m')\in[0,\overline T]^2\times(\R^d)^2\times\Pc(\R^d)^2.$
Moreover, for every compact

$$
    K\subset [0,\overline T]\times\R^d\times\Pc(\R^d),
$$
one has
$$
    \sup_{k\ge1}\sup_{(t,x,m)\in K}
    |p^k(t,x,m)|<\infty.
$$

Since $\R^d\times\Pc(\R^d)$ is separable for the topology under consideration,
the uniform equicontinuity estimate \eqref{eq:uniform_pk_estimate}, together
with a diagonal argument on a countable dense subset, implies that
$(p^k)_{k\ge1}$ is relatively compact for the topology of local uniform
convergence. Hence, up to extracting a subsequence, there exists a Borel map

$$
    \overline p:
    [0,T)\times\R^d\times\Pc(\R^d)
    \longrightarrow\R^d
$$

such that, for every $\overline T<T$,
$$
    p^k\longrightarrow\overline p
$$
locally uniformly on $[0,\overline T]\times\R^d\times\Pc(\R^d).$

Passing to the limit in \eqref{eq:uniform_pk_estimate} shows that $\overline p$
belongs to the same H\"older-Lipschitz regularity class as the characteristic
field in {\rm \Cref{assum:gradient}}.

\medskip

We now identify $\overline p$. For every $k$, every $i=1,\dots,d$, and every
$(t,x,m)\in[0,T)\times\R^d\times\Pc(\R^d)$, one has
\begin{align}
(p^k)^i(t,x,m)
&=
\E \left[
\delta_mG^k(\ell^{p^k,t,m}_T)(N^{t,x}_T)
\frac{W^i_T-W^i_t}{T-t}
\right]
\nonumber
\\
&\quad+
\E \left[
\int_t^T
\Fc^k\left(
s,N^{t,x}_s,\ell^{p^k,t,m}_s,
p^k(s,N^{t,x}_s,\ell^{p^k,t,m}_s)
\right)
\frac{W^i_s-W^i_t}{s-t}\,\mathrm{d}s
\right]
\nonumber
\\
&\quad+
\E \left[
\int_t^T
\delta_m\Fc^k\left(
s,N^{t,U}_s,\ell^{p^k,t,m}_s,
p^k(s,N^{t,U}_s,\ell^{p^k,t,m}_s)
\right)(\Nh^{t,x}_s)
\frac{\Wh^i_s-\Wh^i_t}{s-t}\,\mathrm{d}s
\right].
\label{eq:char_pk_limit}
\end{align}

The stability of the conditional McKean-Vlasov equation defining
$\ell^{p^k,t,m}$ gives

$$
    \ell^{p^k,t,m}_s
    \longrightarrow
    \ell^{\overline p,t,m}_s,
    \qquad s\in[t,T],
$$

in $\Wc_\circ$, locally uniformly in the initial data.
We may consequently pass to the limit in \eqref{eq:char_pk_limit}. For the
integral terms, we use the standard Bismut estimate

$$
    \E\left[
        \left|
            \frac{W^i_s-W^i_t}{s-t}
        \right|
    \right]
    +
    \E\left[
        \left|
            \frac{\Wh^i_s-\Wh^i_t}{s-t}
        \right|
    \right]
    \le
    \frac{C}{\sqrt{s-t}},
    \qquad t<s\le T,
$$

whose singularity is integrable on $[t,T]$. Together with the uniform bounds
on the coefficients and their first-order derivatives, this provides a
dominating integrable function. Hence dominated convergence gives that
$\overline p$ satisfies the characteristic equation \eqref{eq:chac:z}
associated with $(T,\Fc,G)$.
By the uniqueness part of {\rm \Cref{assum:gradient}}, we obtain

$$
    \overline p=p.
$$

In particular, every convergent subsequence of $(p^k)_{k\ge1}$ has the same
limit. Hence the whole sequence converges:
\begin{align}
p^k\longrightarrow p
\quad\text{locally uniformly on}\quad
[0,T)\times\R^d\times\Pc(\R^d).
\label{eq:pk_to_p}
\end{align}

\medskip

We now pass to the limit in the value functions. From
\eqref{eq:Yk_rep_existence}, the convergence of $(\Fc^k,G^k)$ and
\eqref{eq:pk_to_p}, together with dominated convergence, yield, for every
$(t,m)\in[0,T]\times\Pc(\R^d)$,

$$
    Y^k(t,m)\longrightarrow Y(t,m),
$$

where
\begin{align}
Y(t,m)
&:=
\E\left[
G(M^{t,m}_T)
+
\int_t^T
\Fc\bigl(
s,N^{t,U}_s,M^{t,m}_s,
p(s,N^{t,U}_s,M^{t,m}_s)
\bigr)\,\mathrm{d}s
\right].
\label{eq:def_Y_limit}
\end{align}
We set

$$
    Z(t,x,m):=p(t,x,m).
$$

\medskip

It remains to verify that $(Y,Z)$ satisfies the W-BSDE identity. Let
$\mu\in\Lc$ be an arbitrary admissible conditional law flow, associated with a
drift $B$ and a state process $X$ satisfying

$$
    \mathrm{d}X_t
    =
    B(t,X_t,\mu_t)\,\mathrm{d}t
    +
    \mathrm{d}W_t
    +
    \sigma_\circ\,\mathrm{d}W^\circ_t,
    \qquad
    \mu_t
    =
    \Lc(X_t\mid\Gc_t)
    =
    \Lc(X_t\mid\Gc_T).
$$

For every $k$, the smooth W-BSDE identity gives
\begin{align}
Y^k(t,\mu_t)
=
G^k(\mu_T)
&+
\E\left[
\int_t^T
\Fc^k\bigl(
r,X_r,\mu_r,
p^k(r,X_r,\mu_r)
\bigr)\,\mathrm{d}r
\,\middle|\,\Gc_T
\right]
\nonumber
\\
&-
\E\left[
\int_t^T
p^k(r,X_r,\mu_r)
\cdot B(r,X_r,\mu_r)\,\mathrm{d}r
\,\middle|\,\Gc_T
\right]
\nonumber
\\
&-
\int_t^T
\E\left[
p^k(r,X_r,\mu_r)\mid\Gc_T
\right]
\cdot\sigma_\circ\,\mathrm{d}W^\circ_r.
\label{eq:WBSDE_k_limit}
\end{align}

The convergence \eqref{eq:pk_to_p} implies, for a.e.\ $(r,x,\omega)$,

$$
    p^k(r,x,\mu_r)
    \longrightarrow
    p(r,x,\mu_r).
$$

The uniform estimates and the standing integrability assumptions therefore give
\begin{align}
\E\left[
\int_0^T
\left|
p^k(r,X_r,\mu_r)-p(r,X_r,\mu_r)
\right|
\mathrm{d}r
\right]
\longrightarrow0.
\label{eq:pk_L2_flow}
\end{align}

The It\^o isometry then gives convergence of the common-noise stochastic
integrals in \eqref{eq:WBSDE_k_limit}. The two finite-variation terms converge
by dominated convergence. Hence, letting $k\to\infty$ in
\eqref{eq:WBSDE_k_limit}, we obtain
\begin{align}
Y(t,\mu_t)
=
G(\mu_T)
&+
\E\left[
\int_t^T
\Fc\bigl(
r,X_r,\mu_r,
Z(r,X_r,\mu_r)
\bigr)\,\mathrm{d}r
\,\middle|\,\Gc_T
\right]
\nonumber
\\
&-
\E\left[
\int_t^T
Z(r,X_r,\mu_r)\cdot B(r,X_r,\mu_r)
\,\mathrm{d}r
\,\middle|\,\Gc_T
\right]
\nonumber
\\
&-
\int_t^T
\E\left[
Z(r,X_r,\mu_r)\mid\Gc_T
\right]
\cdot\sigma_\circ\,\mathrm{d}W^\circ_r.
\label{eq:WBSDE_limit_final}
\end{align}
Moreover,

$$
    \E\left[
        \int_0^T
        \left(
            \left|
                \Fc(r,X_r,\mu_r,Z(r,X_r,\mu_r))
            \right|
            +
            |Z(r,X_r,\mu_r)|
            +
            \left|
                \E[Z(r,X_r,\mu_r)\mid\Gc_T]
            \right|^2
        \right)\mathrm{d}r
    \right]
    <\infty.
$$

Since $\mu\in\Lc$ was arbitrary, \eqref{eq:WBSDE_limit_final} proves that
$(Y,Z)$ is a {\rm W-BSDE} solution associated with $(\Fc,G)$ on $[0,T]$.

\medskip

We finally identify its Wasserstein gradient. In the smooth case, we have

$$
    \delta_mY^k(t,m)(x)=q^k(t,x,m),
    \qquad
    \nabla_xq^k(t,x,m)=p^k(t,x,m),
$$

where
\begin{align}
q^k(t,x,m)
=
\E\left[
\delta_mG^k(M^{t,m}_T)(N^{t,x}_T)
+
\int_t^T
\delta_mF^k(s,M^{t,m}_s)(N^{t,x}_s)
\,\mathrm{d}s
\right].
\label{eq:qk_rep_limit}
\end{align}
The convergence of the coefficients and of $p^k$ implies, again by dominated
convergence, that

$$
    q^k(t,x,m)\longrightarrow q(t,x,m)
$$

locally uniformly, where

$$
    q(t,x,m)
    :=
    \E\left[
        \delta_mG(M^{t,m}_T)(N^{t,x}_T)
        +
        \int_t^T
        \delta_mF(s,M^{t,m}_s)(N^{t,x}_s)
        \,\mathrm{d}s
    \right],
$$

with

$$
    F(t,m)
    :=
    \int_{\R^d}
    \Fc(t,x,m,p(t,x,m))\,m(\mathrm{d}x).
$$

To see that $q$ is indeed a linear functional derivative of $Y$, fix
$m,m'\in\Pc(\R^d)$ and set

$$
    m_\lambda:=(1-\lambda)m+\lambda m',
    \qquad \lambda\in[0,1].
$$

For every $k$,

$$
    Y^k(t,m')-Y^k(t,m)
    =
    \int_0^1
    \int_{\R^d}
    q^k(t,x,m_\lambda)
    \,(m'-m)(\mathrm{d}x)
    \,\mathrm{d}\lambda.
$$

Passing to the limit yields

$$
    Y(t,m')-Y(t,m)
    =
    \int_0^1
    \int_{\R^d}
    q(t,x,m_\lambda)
    \,(m'-m)(\mathrm{d}x)
    \,\mathrm{d}\lambda.
$$

Thus $q$ is a version of the linear functional derivative of $Y$:

$$
    \delta_mY(t,m)(x)=q(t,x,m).
$$

Finally, since

$$
    \nabla_xq^k=p^k,
$$

and both $q^k\to q$ and $p^k\to p$ locally uniformly, the standard stability
property for spatial derivatives gives

$$
    \nabla_xq(t,x,m)=p(t,x,m).
$$

Consequently,

$$
    \nabla_x\delta_mY(t,m)(x)
    =
    p(t,x,m)
    =
    Z(t,x,m),
$$

for the relevant $(t,x,m)$. This completes the construction and the
identification of the {\rm W-BSDE} solution.

\subsection{Proof of \texorpdfstring{\rm \Cref{prop:comparison}}{proposition}}

Let $(Y^i,Z^i)_{i=1,2}$ be two W-BSDE solutions associated with
$(\Fc^i,G^i)_{i=1,2}$. Fix $(e,m)\in[0,T]\times\Pc(\R^d)$. For
$c,\varepsilon>0$, define the truncated drift
\begin{align*}
B^{c,\varepsilon}(t,x,m)
&:=
\frac{
\Fc^1(t,x,m,Z^1(t,x,m))
-
\Fc^1(t,x,m,Z^2(t,x,m))
}{
\big|Z^1(t,x,m)-Z^2(t,x,m)\big|^2
}
\big(Z^1(t,x,m)-Z^2(t,x,m)\big)\\
&\qquad\times
\chi_c\bigl(|Z^1(t,x,m)-Z^2(t,x,m)|\bigr)
\mathbf 1_{\{t\le T-\varepsilon\}},
\end{align*}
where $\chi_c$ is a smooth cut-off function satisfying
$\chi_c(r)=0$ for $r\le c/2$ and $\chi_c(r)=1$ for $r\ge c$. Since
$B^{c,\varepsilon}$ vanishes for $t>T-\varepsilon$, the assumptions above on
$\Fc^1$ and on $Z^1,Z^2$ show that $B^{c,\varepsilon}$ is bounded and Lipschitz
in the measure variable, uniformly in $(t,x)$.
Therefore the McKean-Vlasov equation
\[
    \mathrm dX_t
    =
    B^{c,\varepsilon}(t,X_t,\mu_t)\,\mathrm dt
    +\mathrm dW_t+\sigma_\circ\,\mathrm dW^\circ_t,
    \qquad
    \mu_t=\Lc(X_t\mid\Gc_t),
\]
started from $\mu_e=m$, generates an admissible conditional law flow.
Applying the W-BSDE identity to both solutions along this same flow and taking the
difference, we obtain
\[
\begin{aligned}
Y^1(e,m)-Y^2(e,m)
&=
\E\bigg[
G^1(\mu_T)-G^2(\mu_T)
\\
&\quad+
\int_e^T
\Big(
\Fc^1(s,X_s,\mu_s,Z^1(s,X_s,\mu_s))
-
\Fc^2(s,X_s,\mu_s,Z^2(s,X_s,\mu_s))
\Big)\,\mathrm ds
\,\bigg|\,\Gc_T
\bigg]
\\
&\quad
-
\E\bigg[
\int_e^T
B^{c,\varepsilon}(s,X_s,\mu_s)\cdot
\big(Z^1(s,X_s,\mu_s)-Z^2(s,X_s,\mu_s)\big)
\,\mathrm ds
\,\bigg|\,\Gc_T
\bigg]
\\
&\quad
-
\int_e^T
\E\big[
Z^1(s,X_s,\mu_s)-Z^2(s,X_s,\mu_s)
\,\big|\,\Gc_T
\big]\cdot\sigma_\circ\,\mathrm dW^\circ_s .
\end{aligned}
\]
Taking expectation removes the common-noise martingale term. We then use the
definition of $B^{c,\varepsilon}$ to cancel the part of the difference of the
generators coming from the $z$-variable. More precisely,
\[
\begin{aligned}
Y^1(e,m)-Y^2(e,m)
&=
\E\bigg[
G^1(\mu_T)-G^2(\mu_T)
\\
&\quad+
\int_e^T
\Big(
\Fc^1(s,X_s,\mu_s,Z^2(s,X_s,\mu_s))
-
\Fc^2(s,X_s,\mu_s,Z^2(s,X_s,\mu_s))
\Big)\,\mathrm ds
\bigg]
+
R_{c,\varepsilon},
\end{aligned}
\]
where the remainder $R_{c,\varepsilon}$ comes only from the truncation on the set
$\{|Z^1-Z^2|\le c\}$ and from the terminal layer $[T-\varepsilon,T]$.
Indeed, by definition of $B^{c,\varepsilon}$, the integrand
of the remainder is $\bigl(1-\chi_c(|Z^1-Z^2|)\mathbf 1_{\{s\le T-\varepsilon\}}\bigr)$ times the difference $\Fc^1(\cdot,Z^1)-\Fc^1(\cdot,Z^2)$, which is bounded by $Lc$ on the truncation set, $L$ being the Lipschitz constant of $\Fc^1$ in $z$, and by $2\|\Fc^1\|_\infty$ on the terminal layer. Hence there is a constant $C$, independent of $c,\varepsilon$, such that
\[
    |R_{c,\varepsilon}|
    \le C(c+\varepsilon).
\]
Hence $R_{c,\varepsilon}\to0$ as $c\downarrow0$ and
$\varepsilon\downarrow0$.
If $G^1\le G^2$ and $\Fc^1\le \Fc^2$, then the main term in the previous
identity is nonpositive. Letting first $c\downarrow0$ and then
$\varepsilon\downarrow0$, we obtain
\[
        Y^1(e,m)-Y^2(e,m)\le0.
\]
Since $(e,m)$ was arbitrary, this proves
\[
        Y^1(e,m)\le Y^2(e,m),
        \qquad (e,m)\in[0,T]\times\Pc(\R^d).
\]

\subsection{Proof of W--BSDEs and PDEs}

We derive time differentiability from the W--BSDE identity and the spatial
regularity in {\rm \Cref{prop:WBSDE_PDE}}. The key is to apply the It\^o
formula with the time argument of $Y$ held fixed.

Set $\Uc:=Y$ and define the operator containing only the spatial and
measure derivatives by
\[
\begin{aligned}
    \mathcal A\Uc(r,m)
    &:=\frac12\int_{\R^d}
    \Tr\left[
        \bigl(\Ir_d+\sigma_\circ\sigma_\circ^\top\bigr)
        \nabla_x^2\delta_m\Uc(r,m)(x)
    \right]m(\mathrm dx)
    \\
    &\quad+\frac12\int_{\R^d\times\R^d}
    \Tr\left[
        \sigma_\circ\sigma_\circ^\top
        \nabla_y\nabla_x\delta_m^2\Uc(r,m)(x,y)
    \right]m(\mathrm dx)m(\mathrm dy).
\end{aligned}
\]
The regularity and boundedness assumptions imply that
$(r,m)\mapsto\mathcal A\Uc(r,m)$ is continuous and bounded on
$[0,\overline T]\times\Pc(\R^d)$ for every $\overline T<T$.

\medskip
Fix $0\le a<b<T$ and $m\in\Pc(\R^d)$. Consider the admissible flow
with zero drift, starting from $m$ at time $a$:
\[
    X_s=\xi+W_s-W_a+\sigma_\circ(W_s^\circ-W_a^\circ),
    \qquad
    \mu_s^{a,m}=\Lc(X_s\mid\Gc_s),
    \qquad s\in[a,b],
\]
where $\xi$ has law $m$ and is independent of both Brownian motions.
In particular, $\mu_a^{a,m}=m$.

Subtracting the W--BSDE identities at $a$ and $b$ and taking expectation
gives the dynamic programming identity
\begin{equation}\label{eq:WBSDE_short_time_DPP}
    \Uc(a,m)
    =\E\left[
        \Uc(b,\mu_b^{a,m})
        +\int_a^b\overline{\Fc}(s,\mu_s^{a,m})\,\mathrm ds
    \right].
\end{equation}
The common-noise stochastic integral has zero expectation by the assumed
bounds on $Z$.

We now apply the It\^o formula to the fixed functional
\[
    m'\longmapsto\Uc(b,m')
\]
along $(\mu_s^{a,m})_{s\in[a,b]}$. This uses only derivatives in the
measure and spatial variables. It gives
\[
\begin{aligned}
    \Uc(b,\mu_b^{a,m})-\Uc(b,m)
    =\int_a^b\mathcal A\Uc(b,\mu_s^{a,m})\,\mathrm ds
    +\int_a^b
    \left(
        \int_{\R^d}
        \nabla_x\delta_m\Uc(b,\mu_s^{a,m})(x)
        \,\mu_s^{a,m}(\mathrm dx)
    \right)\cdot\sigma_\circ\,\mathrm dW_s^\circ.
\end{aligned}
\]
Taking expectation and combining with
\eqref{eq:WBSDE_short_time_DPP}, we obtain
\begin{equation}\label{eq:WBSDE_time_increment}
    \Uc(b,m)-\Uc(a,m)
    =-\E\left[\int_a^b
        \Bigl(
            \overline{\Fc}(s,\mu_s^{a,m})
            +\mathcal A\Uc(b,\mu_s^{a,m})
        \Bigr)\,\mathrm ds
    \right].
\end{equation}
In particular, the bounds on the integrands imply
\[
    |\Uc(b,m)-\Uc(a,m)|
    \le C_{\overline T}(b-a),
    \qquad 0\le a<b\le\overline T<T,
\]
so $\Uc$ is locally Lipschitz in time, uniformly in $m$.

\medskip
Fix $(t,m)\in[0,T)\times\Pc(\R^d)$ and take $a=t$, $b=t+h$ in
\eqref{eq:WBSDE_time_increment}. Then
\[
    \frac{\Uc(t+h,m)-\Uc(t,m)}{h}
    =-\frac1h\E\left[\int_t^{t+h}
        \Bigl(
            \overline{\Fc}(s,\mu_s^{t,m})
            +\mathcal A\Uc(t+h,\mu_s^{t,m})
        \Bigr)\,\mathrm ds
    \right].
\]
Since $\mu_t^{t,m}=m$ and the conditional law flow is continuous,
joint continuity and boundedness of the integrands yield
\begin{equation}\label{eq:WBSDE_right_time_derivative}
    \lim_{h\downarrow0}
    \frac{\Uc(t+h,m)-\Uc(t,m)}{h}
    =-\overline{\Fc}(t,m)-\mathcal A\Uc(t,m).
\end{equation}

For $t>0$, applying \eqref{eq:WBSDE_time_increment} on $[t-h,t]$ to
a flow starting from $m$ at time $t-h$ similarly gives
\[
    \frac{\Uc(t,m)-\Uc(t-h,m)}{h}
    =-\frac1h\E\left[\int_{t-h}^{t}
        \Bigl(
            \overline{\Fc}(s,\mu_s^{t-h,m})
            +\mathcal A\Uc(t,\mu_s^{t-h,m})
        \Bigr)\,\mathrm ds
    \right].
\]
The Brownian motion regularity implies that
$\mu_s^{t-h,m}\to m$ in probability, uniformly for $s\in[t-h,t]$,
as $h\downarrow0$. Thus the left difference quotient has the same limit
as in \eqref{eq:WBSDE_right_time_derivative}.
Consequently, $\partial_t\Uc(t,m)$ exists for every $t<T$, with a
right derivative at $t=0$, and
\begin{equation}\label{eq:PDE_WBSDE_proof}
    \partial_t\Uc(t,m)
    +\overline{\Fc}(t,m)
    +\mathcal A\Uc(t,m)=0.
\end{equation}
The right-hand side of
$\partial_t\Uc=-\overline{\Fc}-\mathcal A\Uc$ is jointly continuous,
so $\partial_t\Uc$ is jointly continuous as well. Using
$Z=\nabla_x\delta_m\Uc$ and the definitions of $\overline{\Fc}$ and
$\mathcal A$, \eqref{eq:PDE_WBSDE_proof} is precisely
\eqref{eq:PDE_WBSDE}.
Finally, the terminal condition in the W--BSDE gives
\[
    \Uc(T,m)=Y(T,m)=G(m),\qquad m\in\Pc(\R^d).
\]
Hence $\Uc$ is a classical solution of
\eqref{eq:PDE_WBSDE}.
\subsection{Proof of W-BSDEs as the mean-field limit of classical BSDEs}

We now prove that the {\rm W--BSDE} arises as the mean--field limit of the
classical $n$--dimensional BSDEs \eqref{eq:n_particle_BSDE_statement}. The idea
is to evaluate the Wasserstein solution $Y$ along the empirical measure
$\mu^n_t$ of the particle system, through a finite--dimensional lift of $Y$ to
$(\R^d)^n$. It\^o's formula shows that the lifted process solves the
$n$--particle BSDE up to a remainder of order $1/n$. A comparison argument then
yields the convergence of the first component, while a linearisation of the
difference of the generators, combined with a Girsanov change of measure,
yields that of the rescaled martingale integrands, i.e.
\[
    nZ^{i,n}_t
    \longrightarrow
    Z(t,N^i_t,\mu^n_t).
\]

\medskip
Throughout the proof, we write
\[
    \Uc(t,m):=Y(t,m),
    \qquad
    p(t,x,m):=Z(t,x,m)=\nabla_x\delta_m\Uc(t,m)(x).
\]
By the assumptions of the proposition and by {\rm \Cref{prop:WBSDE_PDE}}, the
function $\Uc$ is a classical solution of \Cref{eq:PDE_WBSDE_proof}, with
terminal condition $\Uc(T,\cdot)=G$.

For $\xbb=(x_1,\ldots,x_n)\in(\R^d)^n$, we set
\[
    m^n_{\xbb}:=\frac1n\sum_{i=1}^n\delta_{x_i},
    \qquad
    U^n(t,\xbb):=\Uc(t,m^n_{\xbb}).
\]
We first compute the derivatives of this finite-dimensional lift. By the
chain rule for linear functional derivatives, for every $i=1,\ldots,n$,
\[
    \nabla_{x_i}U^n(t,\xbb)
    =
    \frac1n
    \nabla_x\delta_m\Uc(t,m^n_{\xbb})(x_i)
    =
    \frac1n p(t,x_i,m^n_{\xbb}).
\]
Moreover, for $i\neq j$,
\[
    \nabla^2_{x_i x_j}U^n(t,\xbb)
    =
    \frac1{n^2}
    \nabla_y\nabla_x\delta_m^2\Uc(t,m^n_{\xbb})(x_i,x_j),
\]
while, for $i=j$,
\[
    \nabla^2_{x_i x_i}U^n(t,\xbb)
    =
    \frac1n
    \nabla_x^2\delta_m\Uc(t,m^n_{\xbb})(x_i)
    +
    \frac1{n^2}
    \nabla_y\nabla_x\delta_m^2\Uc(t,m^n_{\xbb})(x_i,x_i).
\]
The first term in the diagonal derivative comes from the differentiation of
the spatial variable $x_i$, while the second term comes from the
dependence of the empirical measure $m^n_{\xbb}$ on $x_i$.

Along the particle system $(N^1,\ldots,N^n)$ and its empirical measure $\mu^n_t$
introduced in \Cref{sec:n_player}, we define
\[
    \overline Y^n_t
    :=
    U^n(t,N^1_t,\ldots,N^n_t)
    =
    \Uc(t,\mu^n_t),
\]
and
\[
    \overline Z^{i,n}_t
    :=
    \frac1n
    \nabla_x\delta_m\Uc(t,\mu^n_t)(N^i_t)
    =
    \frac1n p(t,N^i_t,\mu^n_t),
    \qquad i=1,\ldots,n.
\]
Equivalently,
\[
    n\overline Z^{i,n}_t
    =
    p(t,N^i_t,\mu^n_t)
    =
    Z(t,N^i_t,\mu^n_t).
\]

We apply Itô's formula to
\[
    \overline Y^n_t=U^n(t,N^1_t,\ldots,N^n_t).
\]
Since
\[
    \mathrm{d}N^i_t
    =
    \mathrm{d}W^i_t+\sigma_\circ\,\mathrm{d}W^\circ_t,
\]
we have
\[
    \mathrm{d}\langle N^i,N^j\rangle_t
    =
    \left(
        \mathbf 1_{\{i=j\}}\Ir_d
        +
        \sigma_\circ\sigma_\circ^\top
    \right)\mathrm{d}t.
\]
Therefore,
\begin{align*}
    \mathrm{d}\overline Y^n_t
    &=
    \partial_t U^n(t,N_t)\,\mathrm{d}t
    +
    \sum_{i=1}^n
    \nabla_{x_i}U^n(t,N_t)\cdot
    \bigl(\mathrm{d}W^i_t+\sigma_\circ\,\mathrm{d}W^\circ_t\bigr)
    \\
    &\quad
    +
    \frac12
    \sum_{i,j=1}^n
    \Tr\left[
        \left(
            \mathbf 1_{\{i=j\}}\Ir_d
            +
            \sigma_\circ\sigma_\circ^\top
        \right)
        \nabla^2_{x_i x_j}U^n(t,N_t)
    \right]\mathrm{d}t,
\end{align*}
where $N_t=(N^1_t,\ldots,N^n_t)$. Using the expression of
$\nabla_{x_i}U^n$, the martingale part is
\[
    \sum_{i=1}^n
    \overline Z^{i,n}_t\cdot
    \bigl(\mathrm{d}W^i_t+\sigma_\circ\,\mathrm{d}W^\circ_t\bigr).
\]

It remains to identify the drift. Using the second-order derivatives computed
above, we obtain
\begin{align*}
    \frac12
    \sum_{i,j=1}^n
    \Tr\left[
        \left(
            \mathbf 1_{\{i=j\}}\Ir_d
            +
            \sigma_\circ\sigma_\circ^\top
        \right)
        \nabla^2_{x_i x_j}U^n(t,N_t)
    \right]
    &=
    \frac12
    \frac1n\sum_{i=1}^n
    \Tr\left[
        \left(\Ir_d+\sigma_\circ\sigma_\circ^\top\right)
        \nabla_x^2\delta_m\Uc(t,\mu^n_t)(N^i_t)
    \right]
    \\
    &\quad
    +
    \frac12
    \frac1{n^2}
    \sum_{i,j=1}^n
    \Tr\left[
        \sigma_\circ\sigma_\circ^\top
        \nabla_y\nabla_x\delta_m^2\Uc(t,\mu^n_t)(N^i_t,N^j_t)
    \right]
    \\
    &\quad
    +
    \frac12
    \frac1{n^2}
    \sum_{i=1}^n
    \Tr\left[
        \nabla_y\nabla_x\delta_m^2\Uc(t,\mu^n_t)(N^i_t,N^i_t)
    \right].
\end{align*}
In terms of the empirical measure $\mu^n_t$, this can be rewritten as
\begin{align*}
    &\frac12
    \int_{\R^d}
    \Tr\left[
        \left(\Ir_d+\sigma_\circ\sigma_\circ^\top\right)
        \nabla_x^2\delta_m\Uc(t,\mu^n_t)(x)
    \right]\mu^n_t(\mathrm{d}x)
    \\
    &\quad
    +
    \frac12
    \int_{\R^d}\int_{\R^d}
    \Tr\left[
        \sigma_\circ\sigma_\circ^\top
        \nabla_y\nabla_x\delta_m^2\Uc(t,\mu^n_t)(x,y)
    \right]\mu^n_t(\mathrm{d}x)\mu^n_t(\mathrm{d}y)
    \\
    &\quad
    +
    \frac1{2n}
    \int_{\R^d}
    \Tr\left[
        \nabla_y\nabla_x\delta_m^2\Uc(t,\mu^n_t)(x,x)
    \right]\mu^n_t(\mathrm{d}x).
\end{align*}
The first two terms are exactly the second-order terms appearing in the PDE
\eqref{eq:PDE_WBSDE_proof}, evaluated at $m=\mu^n_t$. Hence, using the PDE,
we find
\begin{align*}
    \mathrm{d}\overline Y^n_t
    &=
    \left(
        -
        \frac1n\sum_{i=1}^n
        \Fc\bigl(t,N^i_t,\mu^n_t,p(t,N^i_t,\mu^n_t)\bigr)
        +
        r^n_t
    \right)\mathrm{d}t
    +
    \sum_{i=1}^n
    \overline Z^{i,n}_t\cdot
    \bigl(\mathrm{d}W^i_t+\sigma_\circ\,\mathrm{d}W^\circ_t\bigr),
\end{align*}
where
\[
    r^n_t
    :=
    \frac1{2n}
    \int_{\R^d}
    \Tr\left[
        \nabla_y\nabla_x\delta_m^2\Uc(t,\mu^n_t)(x,x)
    \right]\mu^n_t(\mathrm{d}x).
\]
Rewriting $p(t,N^i_t,\mu^n_t)$ as $n\overline Z^{i,n}_t$, so that the generator
takes the same form as in \eqref{eq:n_particle_BSDE_statement}, setting
$\Rc^n_s:=-r^n_s$, and integrating from $t$ to $T$, we obtain
\begin{align} \label{eq:bar_BSDE_with_R}
    \overline Y^n_t
    &=
    G(\mu^n_T)
    +
    \frac1n\sum_{i=1}^n
    \int_t^T
    \Fc\bigl(s,N^i_s,\mu^n_s,n\overline Z^{i,n}_s\bigr)
    \,\mathrm{d}s
    +
    \int_t^T \Rc^n_s\,\mathrm{d}s
    -
    \sum_{i=1}^n
    \int_t^T
    \overline Z^{i,n}_s\cdot
    \bigl(\mathrm{d}W^i_s+\sigma_\circ\,\mathrm{d}W^\circ_s\bigr).
\end{align}

By the assumptions of the proposition, $\nabla_y\nabla_x\delta_m^2\Uc$ is
bounded, and therefore
\begin{align} \label{eq:remainder_bound}
    \sup_{s\in[0,T]}\left|\Rc^n_s\right|
    \le
    \frac{M_2}{2n},
    \qquad
    M_2
    :=
    \sup_{t,m,x}
    \left|
        \Tr\left[
            \nabla_y\nabla_x\delta_m^2\Uc(t,m)(x,x)
        \right]
    \right|
    <\infty .
\end{align}

We now compare $(\overline Y^n,\overline{\Zbb}^n)$ with the solution
$(Y^n,\Zbb^n)$ of the $n$-particle BSDE
\begin{align} \label{eq:true_n_BSDE_proof}
    Y^n_t
    &=
    G(\mu^n_T)
    +
    \frac1n\sum_{i=1}^n
    \int_t^T
    \Fc\bigl(s,N^i_s,\mu^n_s,nZ^{i,n}_s\bigr)
    \,\mathrm{d}s
    -
    \sum_{i=1}^n
    \int_t^T
    Z^{i,n}_s\cdot
    \bigl(\mathrm{d}W^i_s+\sigma_\circ\,\mathrm{d}W^\circ_s\bigr).
\end{align}
Since the two generators differ by the additive term $\Rc^n$ only, the
standard comparison theorem for BSDEs gives
\begin{align} \label{eq:inequality_Y}
    \left|Y^n_t-\overline Y^n_t\right|
    \le
    (T-t)\sup_{s\in[0,T]}\left|\Rc^n_s\right|
    \le
    \frac{C}{n},
    \qquad t\in[0,T],
\end{align}
for a constant $C$ independent of $n$, where we used \eqref{eq:remainder_bound}.
Since $\overline Y^n_t=Y(t,\mu^n_t)$, in particular we have the pathwise bound
\[\sup_{t\in[0,T]}
    \left|Y^n_t-Y(t,\mu^n_t)\right|
    =
    \sup_{t\in[0,T]}
    \left|Y^n_t-\overline Y^n_t\right|
    \le
    \frac{C}{n}.
    \]

We now prove the estimate on the martingale integrands. Let
\[
    \Delta Y_t:=Y^n_t-\overline Y^n_t,
    \qquad
    \Delta Z^i_t:=Z^{i,n}_t-\overline Z^{i,n}_t,
    \qquad i=1,\ldots,n.
\]
Subtracting \eqref{eq:bar_BSDE_with_R} from \eqref{eq:true_n_BSDE_proof}, we obtain
\begin{align} \label{eq:delta_BSDE}
    \Delta Y_t
    &=
    \int_t^T
    \Delta \Fc_s\,\mathrm{d}s
    -
    \sum_{i=1}^n
    \int_t^T
    \Delta Z^i_s\cdot
    \bigl(\mathrm{d}W^i_s+\sigma_\circ\,\mathrm{d}W^\circ_s\bigr),
\end{align}
where, since $n\overline Z^{i,n}_s=Z(s,N^i_s,\mu^n_s)$ and by definition of $a^i$,
\begin{align} \label{eq:Delta_F_def}
    \Delta \Fc_s
    &:=
    \frac1n\sum_{i=1}^n
    \left[
        \Fc\bigl(s,N^i_s,\mu^n_s,nZ^{i,n}_s\bigr)
        -
        \Fc\bigl(s,N^i_s,\mu^n_s,n\overline Z^{i,n}_s\bigr)
    \right]
    -
    \Rc^n_s
    =
    \sum_{i=1}^n a^i_s\cdot\Delta Z^i_s-\Rc^n_s .
\end{align}
A direct energy estimate on \eqref{eq:delta_BSDE} would only use the bound
$|\Delta\Fc_s|\le L\sum_i|\Delta Z^i_s|+|\Rc^n_s|$, where
$L\le\underline{{\rm Lip}}_T(\Fc,G)$ is the Lipschitz constant of $\Fc$ in $z$,
and passing from $\sum_i|\Delta Z^i_s|$ to $(\sum_i|\Delta Z^i_s|^2)^{1/2}$
costs a factor $\sqrt n$, which cannot be avoided since the particles are
exchangeable. Instead, we work under the measure $\Q^n$ of
{\rm \Cref{sec:n_player}}, under which the linear part of $\Delta\Fc$
is exactly compensated, and

\[
    \Delta Y_t
    =
    -\int_t^T\Rc^n_s\,\mathrm{d}s
    -
    \sum_{i=1}^n\int_t^T
    \Delta Z^i_s\cdot
    \bigl(\mathrm{d}W^{i,\Q}_s+\sigma_\circ\,\mathrm{d}W^\circ_s\bigr).
\]

Set $M_t:=\Delta Y_t-\int_0^t\Rc^n_s\,\mathrm{d}s$. By the previous identity,
$M$ is a continuous $\Q^n$--local martingale, with
\[
    M_T-M_t
    =
    \sum_{i=1}^n\int_t^T
    \Delta Z^i_s\cdot
    \bigl(\mathrm{d}W^{i,\Q}_s+\sigma_\circ\,\mathrm{d}W^\circ_s\bigr),
    \qquad
    M_T=-\int_0^T\Rc^n_s\,\mathrm{d}s .
\]
Moreover $M$ is bounded, by \eqref{eq:inequality_Y} and
\eqref{eq:remainder_bound}. It is therefore a square--integrable
$\Q^n$--martingale, also with respect to $\Hc^n$, to which it is adapted.
Computing its bracket,
\[
    \E^{\Q^n}\left[
        \int_t^T
        \left(
            \sum_{i=1}^n|\Delta Z^i_s|^2
            +
            \Bigl|\sum_{i=1}^n\sigma_\circ^\top\Delta Z^i_s\Bigr|^2
        \right)\mathrm{d}s
        \,\middle|\,\Hc^n_t
    \right]
    =
    \E^{\Q^n}\!\left[(M_T-M_t)^2\,\middle|\,\Hc^n_t\right]
    \le
    \E^{\Q^n}\!\left[M_T^2\,\middle|\,\Hc^n_t\right],
\]
the last inequality because $M_t=\E^{\Q^n}[M_T\mid\Hc^n_t]$. Since
$|M_T|\le TM_2/(2n)$ by \eqref{eq:remainder_bound}, we obtain
\begin{align} \label{eq:Z_sharp_estimate}
    \E^{\Q^n}\left[
        \int_t^T\sum_{i=1}^n|\Delta Z^i_s|^2\,\mathrm{d}s
        \,\middle|\,\Hc^n_t
    \right]
    \le
    \frac{C}{n^2}.
\end{align}
Since $\Delta Z^i_s=Z^{i,n}_s-\frac1n Z(s,N^i_s,\mu^n_s)$ and
$\Delta Y_t=Y^n_t-Y(t,\mu^n_t)$, squaring \eqref{eq:inequality_Y} and combining
it with \eqref{eq:Z_sharp_estimate} gives \eqref{eq:WBSDE_particle_estimate}.
Finally, since
$\frac1n\sum_i|nZ^{i,n}_s-Z(s,N^i_s,\mu^n_s)|^2=n\sum_i|\Delta Z^i_s|^2$, taking
$t=0$ and expectations in \eqref{eq:Z_sharp_estimate} yields the estimate on the
rescaled integrands, and the proof is complete.

\subsection{Proof of the connection with mean field control}

We now prove the verification result connecting the {\rm W-BSDE} with the
McKean-Vlasov control problem. The argument follows the classical BSDE
verification principle, but with the conditional law $\mu_t$ playing the
role of the Markovian state variable. The proof consists in showing that the W-BSDE gives both sides of the
verification argument: the upper bound for all controls and the optimality of
the feedback obtained from the Hamiltonian maximizer.

\medskip

We first consider the W-BSDE $(Y,Z)$ associated with $(\Fc,G)$. The argument is the same as in the absence of common noise, for which we refer to \cite{djete2025notionbsdewassersteinspace}. Indeed, using the equation satisfied by $(Y,Z)$ and applying the verification identity along the controlled conditional law flow
\[
    \mu^\alpha_s:=\Lc(X^{\nu,\alpha}_s\mid \Gc_s),
\]
we obtain, for every $\alpha\in\Ac$,
\begin{align*}
    J(\nu,\alpha)
    =
    Y(0,\nu)
    + \E\left[
        \int_0^T
        \Big(
            h\big(s,X^{\nu,\alpha}_s,\mu^\alpha_s,
                Z(s,X^{\nu,\alpha}_s,\mu^\alpha_s),
                \alpha(s,X^{\nu,\alpha}_s,\mu^\alpha_s)\big)
            -
            \Fc\big(s,X^{\nu,\alpha}_s,\mu^\alpha_s,
                Z(s,X^{\nu,\alpha}_s,\mu^\alpha_s)\big)
        \Big)\mathrm ds
    \right].
\end{align*}
Since
\[
    \Fc(s,x,m,z)=\sup_{a\in A}h(s,x,m,z,a),
\]
the integrand is nonpositive. Hence
\[
    J(\nu,\alpha)\le Y(0,\nu),
    \qquad \alpha\in\Ac.
\]
Moreover, for the feedback control
\[
    \widehat\alpha(s,x,m)
    :=
    \widehat a(s,x,m,Z(s,x,m)),
\]
the maximum is attained pointwise, and therefore equality holds:
\[
    J(\nu,\widehat\alpha)=Y(0,\nu).
\]
It follows that $\widehat\alpha$ is optimal and
\[
    V_{\rm MFC}(\nu)=Y(0,\nu).
\]

Let $\beta\in\Ac^\circ$, and let $\mu$ be the conditional law flow controlled by
$\beta$, with drift $b_1+b^\circ(\cdot,\beta)$. Writing the W-BSDE identity for
$(Y^{\alpha^\circ},Z^{\alpha^\circ})$ along $\mu$ at $t=0$ and taking
expectations, and using that $b^\circ$, $\alpha^\circ$ and $\beta$ do not depend
on $x$, the terms involving $b_1$ cancel and, with
$\bar Z_s:=\bar Z^{\alpha^\circ}(s,\mu_s)$,
\[
    Y^{\alpha^\circ}(0,\nu)
    =
    \E\Big[G(\mu_T)+\int_0^T\Big(
        h^\circ\bigl(s,\mu_s,\bar Z_s,\alpha^\circ(s,\mu_s)\bigr)
        -\bar Z_s\cdot b^\circ\bigl(s,\mu_s,\beta(s,\mu_s)\bigr)
    \Big)\mathrm ds\Big].
\]
Since
$J^\circ(\nu,\beta)=\E\bigl[G(\mu_T)+\int_0^T\int_{\R^d}L(s,x,\mu_s,\beta(s,\mu_s))\,\mu_s(\mathrm dx)\,\mathrm ds\bigr]$,
we obtain
\[
    J^\circ(\nu,\beta)-Y^{\alpha^\circ}(0,\nu)
    =
    \E\int_0^T\Big(
        h^\circ\bigl(s,\mu_s,\bar Z_s,\beta(s,\mu_s)\bigr)
        -
        h^\circ\bigl(s,\mu_s,\bar Z_s,\alpha^\circ(s,\mu_s)\bigr)
    \Big)\mathrm ds
    \le0,
\]
by the maximality of $\alpha^\circ$, with equality for $\beta=\alpha^\circ$.
Hence $\alpha^\circ$ is optimal and $V^\circ_{\rm MFC}(\nu)=Y^{\alpha^\circ}(0,\nu)$.

\subsection{Proof of the connection with mean field games}

We now prove the verification result connecting the {\rm W-BSDE} with mean
field games. The argument follows the same philosophy as in the mean field
control case, but with one additional fixed-point requirement: the flow of
measures used by the representative player must coincide with the conditional
law generated when the optimal feedback is applied.

\medskip
We prove the result in the one-dimensional case $\ell=1$ in order to simplify
notation, so that the enlarged state space is $\R^2$ and the common noise
coefficient on it is $\boldsymbol\sigma_\circ=(\sigma_\circ,\sigma_\circ)$. The
general case follows by the same argument, replacing products by Euclidean
scalar products.

\medskip
We first assume that all coefficients are smooth and that \eqref{eq:gradient_MFG} admits a smooth solution $p$. Recall that
\[
    \alpha^\star(t,x,\pi)
    :=
    \widehat{\alpha}\bigl(t,x,\pi,p(t,x,\pi)\bigr),
\]
and set
\[
    \overline b(t,x,\pi)
    :=
    b\bigl(t,x,\pi,\alpha^\star(t,x,\pi)\bigr)
    =
    \widehat b\bigl(t,x,\pi,p(t,x,\pi)\bigr).
\]
With this notation, the generator on the enlarged space $\R^2$ reads
\[
    \Fc_p(t,x,m,z)
    =
    H(t,x_1,m_2,z_1)
    +
    z_2\,\overline b(t,x_2,m_2),
\]
where $x=(x_1,x_2)$, $z=(z_1,z_2)$, and $m_2$ denotes the second marginal of $m\in\Pc(\R^2)$.

\medskip

We first record the derivatives of the terminal condition and of the generator. For $m\in\Pc(\R^2)$, one has
\begin{align*}
    \delta_m G(m)(x)
    =
    g(x_1,m_2)
    +
    \int_{\R}
    \delta_{m_2}g(y_1,m_2)(x_2)\,m_1(\mathrm{d}y_1).
\end{align*}
Similarly,
\begin{align*}
    \delta_m \Fc_p(t,x,m,z)(y)
    =
    \delta_{m_2}H(t,x_1,m_2,z_1)(y_2)
    +
    z_2\,\delta_{m_2}\overline b(t,x_2,m_2)(y_2),
\end{align*}
and
\begin{align*}
    \nabla_z \Fc_p(t,x,m,z)
    =
    \bigl(
        \nabla_{z_1}H(t,x_1,m_2,z_1),
        \overline b(t,x_2,m_2)
    \bigr).
\end{align*}

\medskip

We now embed the MFG characteristic equation into the characteristic equation associated with the enlarged W-BSDE. Define
\[
    \Pr^1(t,x,m)
    :=
    p(t,x_1,m_2),
    \qquad (t,x,m)\in[0,T]\times\R^2\times\Pc(\R^2).
\]
We then choose $\Pr^2$ as a smooth solution of the second component of the characteristic system associated with $(\Fc_p,G)$. More precisely, writing
\[
    \Pr=(\Pr^1,\Pr^2),
\]
let $U^{t,m}=(U^{1,t,m},U^{2,t,m})$ be such that $\Lc(U^{t,m}_t)=m$ and
\begin{align*}
    \mathrm{d}U^{t,m}_s
    =
    \nabla_z \Fc_p
    \bigl(
        s,U^{t,m}_s,\Ec^{t,m}_s,
        \Pr(s,U^{t,m}_s,\Ec^{t,m}_s)
    \bigr)\,\mathrm{d}s
    +
    \mathrm{d}W_s
    +
    \boldsymbol\sigma_\circ\,\mathrm{d}W^\circ_s,
    \qquad
    \Ec^{t,m}_s:=\Lc(U^{t,m}_s\mid\Gc_s).
\end{align*}
The second component $\Pr^2$ is required to satisfy, for $(t,x,m)\in[0,T)\times\R^2\times\Pc(\R^2)$,
\begin{align*}
    \Pr^2(t,x,m)
    &=
    \E\left[
        \delta_m G(\Ec^{t,m}_T)(N^{t,x}_T)
        \frac{W^2_T-W^2_t}{T-t}
    \right]
    \\
    &\quad+
    \E\left[
        \int_t^T
        \Fc_p\bigl(
            s,N^{t,x}_s,\Ec^{t,m}_s,
            \Pr(s,N^{t,x}_s,\Ec^{t,m}_s)
        \bigr)
        \frac{W^2_s-W^2_t}{s-t}\,\mathrm{d}s
    \right]
    \\
    &\quad+
    \E\left[
        \int_t^T
        \delta_m\Fc_p\bigl(
            s,N^{t,U}_s,\Ec^{t,m}_s,
            \Pr(s,N^{t,U}_s,\Ec^{t,m}_s)
        \bigr)(\widehat N^{t,x}_s)
        \frac{\widehat W^2_s-\widehat W^2_t}{s-t}\,\mathrm{d}s
    \right].
\end{align*}
Here $\widehat N$ and $\widehat W$ denote independent copies of $N$ and $W$ given the common noise, as in the general construction.

\medskip
By the specific form of $\Fc_p$, the conditional law flow $\Ec^{t,m}$ does not depend on the second component $\Pr^2$. Indeed, the dynamics of the second coordinate is
\[
    \mathrm{d}U^{2,t,m}_s
    =
    \overline b(s,U^{2,t,m}_s,\Ec^{t,m}_{s,2})\,\mathrm{d}s
    +
    \mathrm{d}W^2_s
    +
    \sigma_\circ\,\mathrm{d}W^\circ_s,
\]
and therefore
\[
    \Ec^{t,m}_{s,2}
    =
    \Lc(U^{2,t,m}_s\mid \Gc_s)
    =
    \Rr^{p,t,m_2}_s.
\]
Consequently, the first component of the characteristic system reduces exactly to \eqref{eq:gradient_MFG}. Since $\Pr^1(t,x,m)=p(t,x_1,m_2)$ depends on $x$ only through $x_1$, and similarly $\Pr^2$ only through $x_2$, and since $p$ solves
\eqref{eq:gradient_MFG}, a direct substitution shows that $\Pr=(\Pr^1,\Pr^2)$ solves the characteristic equation \eqref{eq:chac:z} associated with the enlarged generator $\Fc_p$ and the terminal condition $G$.

\medskip

By the existence theorem for the W-BSDE associated with a solution of the characteristic equation, there exists a W-BSDE solution
\[
    (Y_p,Z_p)=(Y_p,(Z^1_p,Z^2_p))
\]
associated with $(\Fc_p,G)$ such that, for every admissible flow $\nu$ on $\Pc(\R^2)$,
\[
    Z_p(t,x,\nu_t)
    =
    \Pr(t,x,\nu_t),
    \qquad
    \text{for a.e. }(t,x,\omega).
\]
In particular,
\[
    Z^1_p(t,(x_1,x_2),\nu_t)
    =
    p(t,x_1,\nu_{t,2}),
\]
where $\nu_{t,2}$ denotes the second marginal of $\nu_t$. As in {\rm \Cref{sec:optimal_control}}, we use here
the immediate extension of {\rm \Cref{thm:existence_WBSDE_gradient}} to
generators with linear growth in $z$, since $\Fc_p$ is affine in $z_2$.

\medskip

We now perform the verification argument. Let $\beta$ be an admissible Lipschitz feedback, and let $\nu=(\nu_t)_{t\in[0,T]}$ be the conditional law flow associated with the drift
\[
    (t,x,m)
    \longmapsto
    \bigl(
        b(t,x_1,m_2,\beta(t,x_1,m_2)),
        \overline b(t,x_2,m_2)
    \bigr),
\]
with initial condition
\[
    \nu_0=\Lc(X_0)\otimes\Lc(X_0).
\]
Let
\[
    \nu_t^i:=\Lc(X^i_t\mid\Gc_t),
    \qquad i=1,2,
\]
denote the two marginals of $\nu_t$. Applying the W-BSDE identity to this flow and using the expression of $\Fc_p$ and $Z^1_p$, we obtain
\begin{align*}
    Y_p(t,\nu_t)
    =
    &\E\bigg[
        g(X^1_T,\nu^2_T)
        +
        \int_t^T
        H\bigl(s,X^1_s,\nu^2_s,p(s,X^1_s,\nu^2_s)\bigr)\,\mathrm{d}s
        \\
        &\quad
        -
        \int_t^T
        p(s,X^1_s,\nu^2_s)\,
        b\bigl(s,X^1_s,\nu^2_s,\beta(s,X^1_s,\nu^2_s)\bigr)
        \,\mathrm{d}s
        \\
        &\quad
        -
        \int_t^T
        (p(s,X^1_s,\nu^2_s) + \Pr^2(s,X^2_s,\nu_s))\,
        \sigma_\circ\,\mathrm{d}W^\circ_s
        \,\bigg|\,\Gc_T
    \bigg].
\end{align*}
Taking expectations at $t=0$ and using the definition of the payoff gives
\begin{align*}
    J_{\nu^2}(\beta)
    =
    Y_p(0,\nu_0)
    +
    \E\left[
        \int_0^T
        \Big(
            h\bigl(s,X^1_s,\nu^2_s,p(s,X^1_s,\nu^2_s),
            \beta(s,X^1_s,\nu^2_s)\bigr)
            -
            H\bigl(s,X^1_s,\nu^2_s,p(s,X^1_s,\nu^2_s)\bigr)
        \Big)\,\mathrm{d}s
    \right].
\end{align*}
Since $H$ is the supremum of $h$ over $A$, we have
\[
    h\bigl(s,x,\pi,p(s,x,\pi),a\bigr)
    \le
    H\bigl(s,x,\pi,p(s,x,\pi)\bigr),
    \qquad a\in A.
\]
Therefore,
\[
    J_{\nu^2}(\beta)\le Y_p(0,\nu_0).
\]

Now choose $\beta=\alpha^\star$. By the definition of $\alpha^\star$ as a maximizer of the Hamiltonian,
\[
    h\bigl(s,x,\pi,p(s,x,\pi),\alpha^\star(s,x,\pi)\bigr)
    =
    H\bigl(s,x,\pi,p(s,x,\pi)\bigr).
\]
Hence $J_{\nu^2}(\alpha^\star)=Y_p(0,\nu_0)$, and so
\[
    J_{\nu^2}(\alpha^\star)
    =
    \sup_{\beta}J_{\nu^2}(\beta).
\]

It remains to identify the consistency condition. When $\beta=\alpha^\star$, the first coordinate dynamics becomes
\[
    \mathrm{d}X^1_t
    =
    \overline b(t,X^1_t,\nu^2_t)\,\mathrm{d}t
    +
    \mathrm{d}W^1_t
    +
    \sigma_\circ\,\mathrm{d}W^\circ_t,
\]
while the second coordinate satisfies
\[
    \mathrm{d}X^2_t
    =
    \overline b(t,X^2_t,\nu^2_t)\,\mathrm{d}t
    +
    \mathrm{d}W^2_t
    +
    \sigma_\circ\,\mathrm{d}W^\circ_t.
\]
By uniqueness in law for this McKean-Vlasov dynamics, and since the two coordinates have the same initial law, we obtain
\[
    \nu^1_t=\nu^2_t,
    \qquad t\in[0,T].
\]
Thus $\nu^2$ is consistent with the law of the optimally controlled state and is therefore a mean field game solution associated with $\alpha^\star$.

\medskip
Finally, if the coefficients are only in the regularity class stated in the proposition, the result follows by approximation. One first regularizes the coefficients and applies the previous smooth argument. The stability of the characteristic equation and of the associated W-BSDE then
allows one to pass to the limit, exactly as in the proof of {\rm~\Cref{thm:existence_WBSDE_gradient}} under {\rm \Cref{assum:gradient}}.

\subsection{Proof of the connection with McKean-Vlasov FBSDEs}

Let $p$ be the characteristic field associated with the W-BSDE solution,
so that
\[
    Z(t,x,m)=p(t,x,m).
\]
We fix $(e,m)\in[0,T]\times\Pc(\R^d)$. Let $S^{e,m}$ be the solution of
the characteristic McKean-Vlasov dynamics
\[
    S^{e,m}_t
    =
    N^{e,U}_t
    +
    \int_e^t
    \nabla_z\Fc
    \bigl(
        s,
        S^{e,m}_s,
        \ell^{p,e,m}_s,
        p(s,S^{e,m}_s,\ell^{p,e,m}_s)
    \bigr)\,\mathrm{d}s,
    \qquad \Lc(U)=m,
\]
where $\ell^{p,e,m}_t := \Lc(S^{e,m}_t\mid\Gc_T) = \Lc(S^{e,m}_t\mid\Gc_t)$. We also set
\[
    \overline \ell^{p,e,m}_t
    :=
    \Lc
    \bigl(
        S^{e,m}_t,
        p(t,S^{e,m}_t,\ell^{p,e,m}_t)
        \mid \Gc_T
    \bigr).
\]

Recall the auxiliary function
\[
\begin{aligned}
    q(t,x,m)
    &:=
    \E \left[
        \delta_mG(\ell^{p,t,m}_T)(N^{t,x}_T)
    \right]
    \\
    &\quad+
    \E \left[
        \int_t^T
        \Fc\left(
            s,N^{t,x}_s,\ell^{p,t,m}_s,
            p(s,N^{t,x}_s,\ell^{p,t,m}_s)
        \right)\,\mathrm{d}s
    \right]
    \\
    &\quad+
    \E \left[
        \int_t^T
        \delta_m \Fc\left(
            s,S^{t,m}_s,\ell^{p,t,m}_s,
            p(s,S^{t,m}_s,\ell^{p,t,m}_s)
        \right)(\widehat N^{t,x}_s)
        \,\mathrm{d}s
    \right],
\end{aligned}
\]
where $\widehat N^{t,x}$ denotes an independent copy of the heat process
starting from $x$, conditionally on the common noise. By the construction
used in the proof of {\rm \Cref{thm:existence_WBSDE_gradient}}, we have
\[
    \nabla_x q(t,x,m)=p(t,x,m).
\]

We claim that the triple
\[
    \left(
        S^{e,m}_t,\,
        q(t,S^{e,m}_t,\ell^{p,e,m}_t),\,
        p(t,S^{e,m}_t,\ell^{p,e,m}_t)
    \right)_{t\in[e,T]}
\]
is a solution of the McKean-Vlasov FBSDE
\eqref{eq:MKV_FBSDE_forward}-\eqref{eq:MKV_FBSDE_backward}.
The forward equation is immediate from the definition of $S^{e,m}$. Indeed,
if we define
\[
    \Yc_t
    :=
    q(t,S^{e,m}_t,\ell^{p,e,m}_t),
    \qquad
    \Zc_t
    :=
    p(t,S^{e,m}_t,\ell^{p,e,m}_t),
\]
then $S^{e,m}$ satisfies
\[
    S^{e,m}_t
    =
    N^{e,U}_t
    +
    \int_e^t
    \nabla_z\Fc
    \bigl(
        s,
        S^{e,m}_s,
        \ell^{p,e,m}_s,
        \Zc_s
    \bigr)\,\mathrm{d}s.
\]
Moreover,
\[
    \ell^{p,e,m}_t
    =
    \Lc(S^{e,m}_t\mid\Gc_T),
    \qquad
    \overline \ell^{p,e,m}_t
    =
    \Lc(S^{e,m}_t,\Zc_t\mid\Gc_T).
\]
Thus the conditional laws appearing in the FBSDE are exactly those generated
by the candidate solution.

It remains to verify the backward equation. Applying the Itô formula on the
Wasserstein space to the map
\[
    (t,x,m)\longmapsto q(t,x,m)
\]
along the flow $(S^{e,m}_t,\ell^{p,e,m}_t)$, and using the identity $\nabla_x q(t,x,m)=p(t,x,m)$, gives
\begin{equation}
\begin{aligned}
    \mathrm{d}\Yc_t
    &=
    -
    \bigg[
        \Fc
        \bigl(
            t,
            S^{e,m}_t,
            \ell^{p,e,m}_t,
            \Zc_t
        \bigr)
        +
        \int_{\R^d}\int_{\R^d}
        \delta_m\Fc
        \bigl(
            t,
            x,
            \ell^{p,e,m}_t,
            z
        \bigr)(S^{e,m}_t)
        \,
        \overline \ell^{p,e,m}_t(\mathrm{d}x,\mathrm{d}z)
    \bigg]\mathrm{d}t
    +
    \Zc_t\cdot \mathrm{d}S^{e,m}_t .
\end{aligned}
\end{equation}
Equivalently, integrating between $t$ and $T$,
\[
\begin{aligned}
    \Yc_t
    &=
    \Yc_T
    \\
    &\quad+
    \int_t^T
    \bigg[
        \Fc
        \bigl(
            s,
            S^{e,m}_s,
            \ell^{p,e,m}_s,
            \Zc_s
        \bigr)
        +
        \int_{\R^d}\int_{\R^d}
        \delta_m\Fc
        \bigl(
            s,
            x,
            \ell^{p,e,m}_s,
            z
        \bigr)(S^{e,m}_s)
        \,
        \overline \ell^{p,e,m}_s(\mathrm{d}x,\mathrm{d}z)
    \bigg]\mathrm{d}s
    -
    \int_t^T
    \Zc_s\cdot \mathrm{d}S^{e,m}_s .
\end{aligned}
\]
By the definition of $q$, or equivalently by taking $t=T$ in its
representation, we have
\[
    q(T,x,m)=\delta_mG(m)(x).
\]
Therefore,
\[
    \Yc_T
    =
    q(T,S^{e,m}_T,\ell^{p,e,m}_T)
    =
    \delta_mG(\ell^{p,e,m}_T)(S^{e,m}_T).
\]
Consequently,
\[
\begin{aligned}
    \Yc_t
    &=
    \delta_mG(\ell^{p,e,m}_T)(S^{e,m}_T)
    \\
    &\quad+
    \int_t^T
    \bigg[
        \Fc
        \bigl(
            s,
            S^{e,m}_s,
            \ell^{p,e,m}_s,
            \Zc_s
        \bigr)
        +
        \int_{\R^d}\int_{\R^d}
        \delta_m\Fc
        \bigl(
            s,
            x,
            \ell^{p,e,m}_s,
            z
        \bigr)(S^{e,m}_s)
        \,
        \overline \ell^{p,e,m}_s(\mathrm{d}x,\mathrm{d}z)
    \bigg]\mathrm{d}s
    -
    \int_t^T
    \Zc_s\cdot \mathrm{d}S^{e,m}_s .
\end{aligned}
\]
This is precisely the backward equation
\eqref{eq:MKV_FBSDE_backward}.
We have therefore shown that
\[
    \bigl(
        S^{e,m},
        q(\cdot,S^{e,m}_\cdot,\ell^{p,e,m}_\cdot),
        p(\cdot,S^{e,m}_\cdot,\ell^{p,e,m}_\cdot)
    \bigr)
\]
is a solution of the McKean-Vlasov FBSDE. By uniqueness of the solution to
\eqref{eq:MKV_FBSDE_forward}-\eqref{eq:MKV_FBSDE_backward}, it follows that
\[
    X^{e,m}_t=S^{e,m}_t,
    \qquad
    \Yc^{e,m}_t=q(t,S^{e,m}_t,\ell^{p,e,m}_t),
    \qquad
    \Zc^{e,m}_t=p(t,S^{e,m}_t,\ell^{p,e,m}_t),
\]
for $\mathrm{d}t\otimes\mathrm{d}\P$-a.e. $(t,\omega)$. In particular, since $p=Z$, we obtain
\[
    \Zc^{e,m}_t
    =
    p(t,X^{e,m}_t,\mu^{e,m}_t)
    =
    Z(t,X^{e,m}_t,\mu^{e,m}_t),
    \qquad
    \mathrm{d}t\otimes\mathrm{d}\P\text{-a.e.}
\]
Taking $t=e$, and using $\mu^{e,m}_e=m$, gives in particular  $\Zc^{e,m}_e  =  Z(e,X^{e,m}_e,m)$,
which is the desired identification. This completes the proof.

\bibliography{BSDE_McKVlasov_arxivVersion}

\end{document}